\title{Unipotent Schottky bundles on Riemann surfaces and complex tori}
\date{August, 2012}
\author{Carlos Florentino, Thomas Ludsteck}
\newcommand{\C}{\mathbb{C}}
\newcommand{\N}{\mathbb{N}}
\newcommand{\Z}{\mathbb{Z}}
\theoremstyle{plain}
\newtheorem{satz}{Satz}[section]
\newtheorem{lem}[satz]{Lemma}
\newtheorem{prop}[satz]{Proposition}
\newtheorem{cor}[satz]{Corollary}
\newtheorem{thm}[satz]{Theorem}
\theoremstyle{definition}
\newtheorem{Def}[satz]{Definition}
\newtheorem{thmdefstyle}[satz]{Theorem}
\newtheorem{con}[satz]{Construction}
\newtheorem{rem}[satz]{Remark}
\def\Ad{\mathsf{Ad}}
\def\hom{\mathsf{Hom}}
\begin{document}

\maketitle

\begin{abstract}
We study a natural map from representations of a free (resp. free
abelian) group of rank $g$ in $GL_{r}(\C)$, to holomorphic vector bundles of
degree zero over a compact Riemann surface $X$ of genus $g$ (resp. complex
torus $X$ of dimension $g$). This map defines what is called a Schottky functor.
Our main result is that this functor induces an equivalence
between the category of unipotent representations of Schottky groups and
the category of unipotent vector bundles on $X$. We also show that, over a
complex torus, any vector or principal bundle with a flat holomorphic connection
is Schottky.  
\end{abstract}

\section{Introduction}\label{sec_introduction}
Let $X$ be a compact K\"ahler manifold. In the context of the so-called
non-abelian Hodge theory, Simpson has established in \cite[Lemma 3.5]{sim} an 
equivalence of categories 
between the category of flat bundles over $X$ and the category of Higgs bundles 
on $X$ which are extensions of stable bundles of degree zero with vanishing 
first and second Chern classes.
In this article, we are particularly interested in unipotent objects, and
establish
equivalences of categories between certain categories of flat unipotent modules 
and 
categories of unipotent holomorphic bundles.

To describe these results, let $X$ be a complex manifold, ${\mathcal O}_{X}$ its
structure sheaf and denote by 
$\pi_{1}(X)$ the fundamental group 
of $X$ (for some base point). In this situation there is a well known functor
$$
\C \pi_{1}(X)\textrm{-mod} \longrightarrow {\mathcal O}_{X}\textrm{-mod},
$$
that assigns to a module over the group ring $\C \pi_{1}(X)$ an ${\mathcal
O}_{X}$-module on $X$. If $M$ is a $\C \pi_{1}(X)$-module and if $M$ is of
finite rank $r$ as a $\C$-module, then $M$ may be identified with a
representation $\rho:\pi_{1}(X) \longrightarrow Aut_{\C}(M)$, 
and after a choice of basis, $Aut_{\C}(M)$ may be identified with $GL_{r}(\C)$.
In this case the assigned ${\mathcal O}_{X}$-module $E_{\rho}$ is a holomorphic
vector bundle
on $X$.    

An interesting subclass of these $\C \pi_{1}(X)$-modules are the so-called
\emph{Schottky modules}. Let $\Sigma$ be a free (or free abelian) group 
and let $\alpha:\pi_{1}(X) \longrightarrow \Sigma$
be a surjective homomoprhism.
A $\C \pi _{1}(X)$-module $M$ is called $\Sigma$-\emph{Schottky} if it is
induced by a $\C \Sigma$-module via the canonical morphism $\tilde{\alpha} : \C
\pi_{1}(X) \longrightarrow \C \Sigma$. 
If one restricts the functor defined above to $\pi_{1}(X)$-modules that
are Schottky, then one obtains a new functor $S$ 
(called the \emph{Schottky functor})
$$
S: \C \Sigma \textrm{-mod} \longrightarrow {\mathcal O}_{X}\textrm{-mod}.
$$
We will also call a ${\mathcal O}_{X}$-module
$E$ on $X$ Schottky, if there is a $\C \Sigma$-module $M$ such that $S(M)$ is
isomorphic to $E$. See Section \ref{sec_Schottky_functors} for more details.

Let us now define ${\bf Un}_{\C\Sigma}$ as the category of unipotent $\C
\Sigma$-modules and 
${\bf Un}_{{\mathcal O}_{X}}$ as the category of unipotent ${\mathcal
O}_{X}$-modules on $X$. The Schottky functor is exact, so induces 
a well defined functor $S:  {\bf Un}_{\C\Sigma} \longrightarrow   {\bf
Un}_{{\mathcal O}_{X}}$ that we will still denote by $S$. Note that a unipotent
${\mathcal O}_{X}$-module of level $r$ is always a vector bundle of rank $r$
(cf. Section \ref{section_unipotent_bd_group}).

Two special cases of Schottky representations are of particular interest in our
paper: 

If $X$ is a \emph{complex torus} of dimension $g$, defined as a quotient $V /
\Lambda$ of a $g$-dimensional $\C$-vector space $V$ by a lattice $\Lambda$, 
then by \cite[S 8 p. 143]{gun} there exists a $\C$-linear basis $e_{1}, \ldots,
e_{g}$ of $V$, and a basis $\lambda_{1},\ldots ,\lambda_{2g}$ of $\Lambda$, such
that $\Pi$, the period matrix of $X$, defined by $\lambda_{i}= \sum _{j=1}^{g}
\pi_{ij}\cdot e_{j}$, is of the form
$$
\Pi = (Z ,I),
$$ 
where $Z \in M(g\times g,\C)$ is a symmetric invertible matrix, 
and $I$ is the identity matrix in dimension $g$.
In this case we can let
$\Sigma$ be the free abelian group with $g$ generators $B_1,...,B_g$
and define $\alpha:\pi_{1}(X)\cong \Lambda\longrightarrow\Sigma$ 
as the surjective homomorphism sending 
$\lambda_i$ to $B_i$, and $\lambda_{g+i}$ to the identity in $\Sigma$,
for all $i=1,...,g$.

Our main result for complex tori is the following (see Section
\ref{section:CaseOfComplexTori}):

\begin{thm}\label{thm_main_for_tori}
Let $X$ be a complex torus of dimension $g$, and let
$\Sigma$ be a free abelian group of rank $g$.
The Schottky functor $S$ induces an equivalence of categories
$$
S:{\bf Un}_{\C \Sigma} \simeq {\bf Un}_{{\mathcal O}_{X}},
$$
between the category of unipotent $\C\Sigma$-modules, and the category of
unipotent ${\mathcal O}_{X}$-modules on $X$.
\end{thm}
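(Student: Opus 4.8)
The plan is to prove the equivalence by the standard dévissage for unipotent categories, reducing both full faithfulness and essential surjectivity to a single computation of self-extensions of the unit object, and then propagating that computation through the multiplicative structure. Both categories are generated under extensions by a unit object ($\C$ with trivial $\Sigma$-action on one side, $\mathcal{O}_X$ on the other), and $S$ is exact and sends unit to unit. I claim it suffices to show that $S$ induces isomorphisms $\mathrm{Ext}^i_{\C\Sigma}(\C,\C)\to\mathrm{Ext}^i_{\mathcal{O}_X}(\mathcal{O}_X,\mathcal{O}_X)$ for all $i\geq 0$. Granting this, full faithfulness follows by a double induction on the unipotent levels of $M$ and $N$: choosing short exact sequences $0\to M'\to M\to\C\to 0$ and $0\to N'\to N\to\C\to 0$, applying the long exact $\mathrm{Ext}$-sequences on both sides, and using exactness and naturality of $S$, the five lemma gives that $S\colon\mathrm{Ext}^i_{\C\Sigma}(M,N)\to\mathrm{Ext}^i_{\mathcal{O}_X}(SM,SN)$ is an isomorphism for all $i$; the case $i=0$ is full faithfulness. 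Essential surjectivity then follows by induction on level: a unipotent bundle $E$ sits in $0\to E'\to E\to\mathcal{O}_X\to 0$ with $E'\cong S(M')$ by induction, and the isomorphism $\mathrm{Ext}^1_{\mathcal{O}_X}(\mathcal{O}_X,E')\cong\mathrm{Ext}^1_{\C\Sigma}(\C,M')$ supplied by full faithfulness lifts the extension class to a $\C\Sigma$-module $M$ with $S(M)\cong E$.

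Next I would identify the two cohomology algebras. On the group side $\mathrm{Ext}^\bullet_{\C\Sigma}(\C,\C)=H^\bullet(\Sigma,\C)$, and since $\Sigma\cong\Z^g$ this is the exterior algebra $\Lambda^\bullet\Hom(\Sigma,\C)$ on the $g$-dimensional space $\Hom(\Sigma,\C)$. On the bundle side $\mathrm{Ext}^\bullet_{\mathcal{O}_X}(\mathcal{O}_X,\mathcal{O}_X)=H^\bullet(X,\mathcal{O}_X)$, and for a complex torus Dolbeault theory gives $H^\bullet(X,\mathcal{O}_X)=\Lambda^\bullet H^{0,1}(X)$, again an exterior algebra on a $g$-dimensional space, with cup product equal to the wedge. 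Both are therefore free exterior algebras generated in degree $1$. Because $S$ is the restriction of a tensor functor — pullback along $\tilde{\alpha}$ followed by the flat-to-holomorphic bundle functor, both monoidal — the induced map on $\mathrm{Ext}^\bullet(\mathbf{1},\mathbf{1})$ is a homomorphism of graded algebras. Hence it is enough to prove that $S$ is an isomorphism in degree $1$; multiplicativity then forces it to be an isomorphism in every degree, which is precisely the input required above.

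The degree-one statement is the crux. A self-extension of $\C$ is a $2$-dimensional unipotent representation $B_i\mapsto\left(\begin{smallmatrix}1&c_i\\0&1\end{smallmatrix}\right)$, classified by $(c_1,\dots,c_g)\in\Hom(\Sigma,\C)$. Applying $S$ pulls this back to $\pi_1(X)\cong\Lambda$ and forms the associated flat bundle; the extension class of the resulting holomorphic bundle is the image of $[c]$ under the composite
\[
\Hom(\Sigma,\C)\hookrightarrow\Hom(\Lambda,\C)=H^1(X,\C)\twoheadrightarrow H^{0,1}(X)=H^1(X,\mathcal{O}_X),
\]
the last arrow being the Hodge projection. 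Using $\alpha$, the subspace $\Hom(\Sigma,\C)$ equals $\{f\in\Hom(\Lambda,\C):f(\lambda_{g+i})=0,\ i=1,\dots,g\}=\mathrm{span}(\lambda_1^*,\dots,\lambda_g^*)$. The normalized period matrix $\Pi=(Z,I)$ makes $\lambda_{g+1},\dots,\lambda_{2g}$ a $\C$-basis of $V$ with $dz_j(\lambda_{g+i})=\delta_{ij}$, so any nonzero element of $H^{1,0}(X)=\mathrm{span}(dz_1,\dots,dz_g)$ takes a nonzero value on some $\lambda_{g+i}$ and hence does not lie in $\Hom(\Sigma,\C)$. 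Thus $\Hom(\Sigma,\C)\cap H^{1,0}(X)=0$, and the dimension count $g+g=2g=\dim_\C H^1(X,\C)$ shows the projection onto $H^{0,1}(X)$ is an isomorphism.

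The genuinely geometric step, and the step I expect to be the main obstacle, is this degree-one isomorphism: it is the only place where the complex structure of $X$ enters, through the interaction of the flat-to-holomorphic functor with the Hodge decomposition and, crucially, through the compatibility of the chosen surjection $\alpha$ (which kills $\lambda_{g+1},\dots,\lambda_{2g}$) with the normalized period matrix. I note that only the block form of $\Pi$ is used here, not the symmetry or invertibility of $Z$. The remaining ingredients — passage from degree $1$ to all degrees by multiplicativity, and the two inductions on unipotent level — are formal once the self-extensions of the unit have been matched.
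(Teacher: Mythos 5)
Your proposal is correct, but it takes a genuinely different route from the paper's, most visibly for essential surjectivity. The paper proves essential surjectivity geometrically: unipotent bundles on a compact K\"ahler manifold are flat by Simpson (Proposition \ref{rem_Unipotent_bundles_are_flat}); Matsushima's classification then produces a unipotent $\C\Lambda$-module giving the bundle (Lemma \ref{lem:unipotentBundlesareFlat}); and an explicit gauge transformation $f(z)=\exp(\int_{0}^{z}\omega)$, combined with \cite[Lemme 5.1]{mor}, converts that module into one factoring through $\Sigma$ (Proposition \ref{pro:unipotent-Schottky}). Fullness is then a five-lemma induction whose only nontrivial input is the degree-one isomorphism (Lemma \ref{lem_yoneda_ext1_S_iso_CT}), so the paper never needs $Ext^{i}$ for $i\geq 2$. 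You instead run the Ext-lifting induction for essential surjectivity --- exactly the strategy the paper reserves for Riemann surfaces in the proof of Theorem \ref{thm_main_for_riemann_surfaces} --- and, since $Ext^{2}$ does not vanish on a torus of dimension $g\geq 2$, the five lemma forces you to control $Ext^{i}(\C,\C)$ in all degrees; your exterior-algebra multiplicativity argument is precisely the extra ingredient that makes this possible, and it is sound, with two caveats: what makes $S_{*}$ a graded ring map is exactness of $S$ (compatibility with Yoneda splicing), not monoidality as you state, and the identifications of Yoneda products with cup products on $H^{\bullet}(\Z^{g},\C)\cong\Lambda^{\bullet}\C^{g}$ and $H^{\bullet}(X,\mathcal{O}_{X})\cong\Lambda^{\bullet}H^{0,1}(X)$, both exterior algebras generated in degree one, are standard facts that should be cited. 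Your degree-one computation is the paper's Lemma \ref{lem_yoneda_ext1_S_iso_CT} in Hodge-theoretic dress: where the paper quotes \cite[Lemme 6.3]{mat} to force the corner homomorphism $\rho'$ to be $\C$-linear and then kills it using the period matrix, you phrase the same fact as $\alpha^{*}\Hom(\Sigma,\C)\cap H^{1,0}(X)=0$; both reduce to the observation that a $\C$-linear functional on $V$ vanishing on $\lambda_{g+i}=e_{i}$, $i=1,\dots,g$, is zero. As for what each approach buys: the paper's route produces geometric byproducts (flatness and an explicit Schottky uniformization of unipotent bundles over the torus) that are recycled later for Theorem \ref{thm:Flat-Schottky-vector}, while yours is more uniform --- the same homological skeleton handles tori and Riemann surfaces, with the exterior-algebra step substituting for the vanishing of $Ext^{2}$ --- at the cost of the multiplicative facts above plus the routine first-variable analogue of Proposition \ref{prop_naturality_of_S_and_cohomology} that your double induction implicitly uses.
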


One can deduce from Simpson's correspondence that unipotent vector bundles admit
a flat connection. Then we parametrize explicitly the set of representations
that
give rise to isomorphic unipotent vector bundles. This method allows us to show
that the
Schottky functor is essentially surjective.

If $X$ is a compact \emph{Riemann surface} of genus $g$, then let us fix a
canonical basis \cite[III 1.]{fakr} of $\pi _{1}(X)$: elements $a_{1}, \ldots,
a_{g}, b_{1}, \ldots , b_{g}$ 
that generate $\pi _{1}(X)$, subject to the single relation $\prod _{i=1}^{g}
a_{i} b_{i} a_{i}^{-1} b_{i}^{-1} =1$. Let $\Sigma:= F_{g}$
be the free group of $g$ generators $B_{1}, \ldots, B_{g}$, and let $\alpha:
\pi_{1}(X) \longrightarrow \Sigma$ be the homomorphism defined by 
$\alpha(a_{i})=1$, $\alpha(b_{i})=B_{i}$ for $i=1, \ldots, g$.
This is the classical case that justifies the use of the term 
``Schottky functor''.

Our main result for Riemann surfaces, proved in Section
\ref{section_RiemannSurface}, is as follows:

\begin{thm}\label{thm_main_for_riemann_surfaces}
Let $X$ be a compact Riemann surface of genus $g$, and let
$\Sigma$ be a free group of rank $g$.
The Schottky functor $S$ induces an equivalence of categories
$$
S:{\bf Un}_{\C\Sigma} \simeq {\bf Un}_{{\mathcal O}_{X}},
$$
between the category of unipotent $\C\Sigma$-modules, and the category of
unipotent ${\mathcal O}_{X}$-modules on $X$.
\end{thm}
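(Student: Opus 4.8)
The plan is to follow the same strategy used for complex tori, reducing the statement to a computation of $\operatorname{Hom}$ and $\operatorname{Ext}$ groups between the trivial objects and then propagating the result by dévissage. Both categories are ``unipotent'' in a strong sense: every object of ${\bf Un}_{\C\Sigma}$ is an iterated extension of the trivial module $\C$, and (since a unipotent ${\mathcal O}_X$-module of level $r$ is a vector bundle of rank $r$) every object of ${\bf Un}_{{\mathcal O}_X}$ is an iterated extension of ${\mathcal O}_X$; these are the only simple objects on each side. Because $S$ is exact and sends $\C$ to ${\mathcal O}_X$ (the bundle associated to the trivial representation), it suffices to understand $S$ at the level of extension classes.

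First I would record the cohomological dimensions. Since $\Sigma=F_g$ is free, $\operatorname{Ext}^i_{\C\Sigma}(A,B)=H^i(F_g,\operatorname{Hom}_\C(A,B))$ vanishes for $i\ge 2$; in particular $\operatorname{Hom}_{\C\Sigma}(\C,\C)=\C$ and $\operatorname{Ext}^1_{\C\Sigma}(\C,\C)=H^1(F_g,\C)\cong\C^g$. On the holomorphic side $\operatorname{Ext}^i_{{\mathcal O}_X}(E,F)=H^i(X,E^\vee\otimes F)$ vanishes for $i\ge 2$ because $\dim_\C X=1$, while $\operatorname{Hom}_{{\mathcal O}_X}({\mathcal O}_X,{\mathcal O}_X)=\C$ and $\operatorname{Ext}^1_{{\mathcal O}_X}({\mathcal O}_X,{\mathcal O}_X)=H^1(X,{\mathcal O}_X)\cong\C^g$. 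Thus both categories are hereditary with abstractly matching Yoneda algebras. The crucial point is that $S$ realizes a genuine isomorphism on $\operatorname{Ext}^1$: a class in $\operatorname{Ext}^1_{\C\Sigma}(\C,\C)$ is the extension with $B_i\mapsto\left(\begin{smallmatrix}1 & c_i\\ 0 & 1\end{smallmatrix}\right)$, and under $S$ it becomes the flat rank-two bundle whose monodromy kills the $a_i$ and sends $b_i$ to $\left(\begin{smallmatrix}1 & c_i\\ 0 & 1\end{smallmatrix}\right)$. Via Simpson's correspondence this flat extension corresponds to the holomorphic extension class obtained by projecting $\sum_i c_i\,b_i^\ast\in H^1(X,\C)$ onto $H^{0,1}=H^1(X,{\mathcal O}_X)$. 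Using the canonical basis and the Riemann bilinear relations, the classes $b_1^\ast,\dots,b_g^\ast$ span a subspace complementary to $H^{1,0}=H^0(X,\Omega^1_X)$, so this projection is an isomorphism, and hence so is $S\colon\operatorname{Ext}^1_{\C\Sigma}(\C,\C)\to\operatorname{Ext}^1_{{\mathcal O}_X}({\mathcal O}_X,{\mathcal O}_X)$.

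Granting the isomorphisms on $\operatorname{Hom}$ and $\operatorname{Ext}^1$ between the trivial objects, I would deduce full faithfulness by a double dévissage. Fixing $M$ and inducting on the length of $N$, a short exact sequence $0\to N'\to N\to\C\to 0$ produces long exact sequences for $\operatorname{Hom}(M,-)$ and for $\operatorname{Hom}(S M,-)$ which $S$ maps one to the other; the five lemma, the vanishing of $\operatorname{Ext}^{\ge 2}$, and the inductive hypothesis (applied also in the first variable) force $S$ to be an isomorphism on $\operatorname{Hom}$ and on $\operatorname{Ext}^1$ for all pairs of unipotent objects, while faithfulness at the base case is immediate from the associated-bundle description of $S$. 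Essential surjectivity then follows by induction on the level: a unipotent bundle $E$ of level $r$ fits into $0\to{\mathcal O}_X\to E\to E''\to 0$ with $E''$ of level $r-1$, so $E''\cong S(M'')$ by induction, and since $S\colon\operatorname{Ext}^1_{\C\Sigma}(M'',\C)\to\operatorname{Ext}^1_{{\mathcal O}_X}(E'',{\mathcal O}_X)$ is an isomorphism, the extension class of $E$ is realized by a $\C\Sigma$-module $M$ with $S(M)\cong E$.

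The main obstacle I anticipate is the $\operatorname{Ext}^1$ comparison: one must check precisely that applying $S$ to a unipotent flat extension yields the expected $(0,1)$-Dolbeault class, which is exactly where Simpson's non-abelian Hodge correspondence and the nondegeneracy of the period matrix enter, and where the specific choice $\alpha(a_i)=1$, $\alpha(b_i)=B_i$ of the canonical basis is essential, so that $b_1^\ast,\dots,b_g^\ast$ project isomorphically onto $H^1(X,{\mathcal O}_X)$. Once this single isomorphism is secured, fullness, faithfulness and essential surjectivity are formal consequences of the hereditary structure of the two categories.
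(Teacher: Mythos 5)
Your proposal is correct, and its skeleton is the same as the paper's: everything is reduced to the single comparison map $S_{*}\colon Ext^{1}_{\C\Sigma}(\C,\C)\to Ext^{1}_{{\mathcal O}_{X}}({\mathcal O}_{X},{\mathcal O}_{X})$, and then full faithfulness and essential surjectivity are propagated by induction on the level using long exact sequences, the vanishing of $Ext^{\geq 2}$ on both sides (Proposition \ref{prop_explicit_computation_some_Ext} b), d)), and the five lemma, exactly as in Section \ref{section_RiemannSurface}. Where you genuinely diverge is the proof of the key lemma. The paper proves injectivity of $S_{*}$ at the level of representations (Lemma \ref{lem_unipotent_rk2_S_injective}, modeled on \cite[Lemme 6.3]{mat}): assuming $S(\rho)$ trivial, it analyzes an explicit trivializing map $f\colon\widetilde{X}\to GL_{2}(\C)$ entry by entry, extracts a holomorphic differential $\omega$ whose periods are $\rho'$, and concludes $\omega=0$ because $\rho'$ kills the $a$-cycles; injectivity plus the dimension count $g=g$ then gives Lemma \ref{lem_yoneda_ext1_S_iso_RS}. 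You instead identify $S_{*}$ cohomologically as the composite $Hom(F_{g},\C)\xrightarrow{\ \alpha^{*}\ }H^{1}(X,\C)\twoheadrightarrow H^{1}(X,{\mathcal O}_{X})$, i.e.\ the Hodge projection restricted to the span of $b_{1}^{*},\dots,b_{g}^{*}$, and get injectivity from the bilinear relations. Both arguments rest on the identical period-theoretic fact — a holomorphic $1$-form with vanishing $a$-periods is zero — so the content is equivalent; yours is shorter and more conceptual, the paper's is self-contained and avoids having to identify the abstract $S_{*}$ of Section \ref{sec_schottky_functors_and_cohomology} with a map of sheaf-cohomology groups. That identification is the one step you should make precise, and it does not require Simpson's correspondence: it is elementary functoriality of $-\otimes_{\C_{X}}{\mathcal O}_{X}$ combined with Propositions \ref{prop_naturality_of_S_and_cohomology} and \ref{prop_compatibility_Yoneda_Derived_ext} (the needed statement is just that the holomorphic extension underlying a flat extension has class equal to the image of the Betti class under $H^{1}(X,\C)\to H^{1}(X,{\mathcal O}_{X})$); in the paper, Simpson's result is used only in the torus case, to show unipotent bundles are flat, and never in the Riemann-surface proof. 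Two further cosmetic differences: for fullness you run the d\'evissage in both variables, where the paper reduces to the second variable via $Hom(M_{1},M_{2})\cong Hom(\C,M_{1}^{*}\otimes M_{2})$ (so you additionally need first-variable naturality of $S_{*}$, which holds by the same argument but is not stated in the paper); and for essential surjectivity you peel ${\mathcal O}_{X}$ off as a subobject where the paper peels it off as a quotient — both ends of a unipotent filtration give such exact sequences, so either choice works.
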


Of course, both theorems above include the case of elliptic curves ($g=1$). 
In the level (or rank) two case, the result basically follows from the fact that
the Schottky functor $S$ induces an isomorphism of \emph{Yoneda Ext} groups 
$S_{*} :YExt^{1}_{\C \Sigma}(\C,\C) \longrightarrow YExt^{1}_{{\mathcal
O}_{X}}({\mathcal O}_{X},{\mathcal O}_{X})$. The general case then results from
the 
induction on the level, as well as  
the fact that $S$ is compatible with Yoneda Ext of $\C\Sigma$-modules and
${\mathcal O}_{X}$-modules. The relevant definitions and techniques from
homological algebra that are needed will be described in Section
\ref{sec_schottky_functors_and_cohomology}.    

We note that, in the broader context of Simpson's correspondence,
the category of Higgs bundles which are extensions of stable bundles of degree
zero with vanishing 
first and second Chern classes, contains all unipotent Higgs bundles
(successive extensions of trivial Higgs line bundles). The latter category
contains the
unipotent vector bundles as a full subcategory by setting the Higgs fields equal
to
zero. In a recent paper S. Lekaus \cite{lek05} has classified those unipotent
$\pi_{1}(X)$-representations that give rise to unipotent Higgs bundles with zero
Higgs fields under the correspondence between flat bundles and their
monodromy representations. The method used for this classification differs from
ours, and it
would be interesting to know whether one can find a relationship between her
result and our
results of Theorem \ref{thm_main_for_tori} and Theorem
\ref{thm_main_for_riemann_surfaces}.

The structure of flat vector bundles on complex tori is well understood, mainly
due to the work of Matsushima \cite{mat} and Morimoto \cite{mor}.
In particular, such a vector bundle admits a flat connection if and only if
every indecomposable component is a tensor product of a line bundle
of degree zero with a flat unipotent vector bundle.
For elliptic curves this result was already known due to the work of Atiyah
\cite{ati}. An analogous result in the case of principal bundles over complex
tori was shown by Biswas-Gomez in \cite{bigo}, and similar descriptions were 
obtained recently by M. Brion \cite{bri} for algebraic homogeneous vector and
principal bundles 
over abelian varieties.

Based on the classification of Matsushima and Morimoto, and Theorem
\ref{thm_main_for_tori} we could then show the following theorem, which
generalizes \cite[Theorem 6]{flo} (see Section
\ref{sec_Flat_and_Schottky_over_tori}).

\begin{thm}\label{thm:Flat-Schottky-vector}Let $E$ be a vector bundle over
a complex torus. Then, $E$ admits a flat holomorphic connection if and only if
$E$ is a Schottky vector bundle.\end{thm}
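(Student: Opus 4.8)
The plan is to prove the two implications separately, with essentially all of the content concentrated in the direction ``flat $\Rightarrow$ Schottky''. For the easy direction, suppose $E$ is Schottky, so $E\cong S(M)$ for some finite-rank $\C\Sigma$-module $M$. Pulling $M$ back along $\tilde\alpha:\C\pi_1(X)\to\C\Sigma$ exhibits $E$ as the holomorphic bundle $E_\rho$ associated to a representation $\rho:\pi_1(X)\to GL_r(\C)$ that factors through $\Sigma$. Such an associated bundle is built from a local system on $X$, hence carries the tautological flat holomorphic connection descending from the trivial connection on the universal cover; so $E$ admits a flat holomorphic connection. This uses only that the Schottky functor produces the holomorphic structure underlying the flat bundle of $\rho$, which is immediate from its definition.

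For the converse I would invoke the classification of Matsushima \cite{mat} and Morimoto \cite{mor} quoted above: $E$ admits a flat connection if and only if, in its Krull--Schmidt decomposition $E\cong\bigoplus_i E_i$, every indecomposable summand is of the form $E_i\cong L_i\otimes U_i$ with $L_i$ a degree-zero line bundle and $U_i$ a flat unipotent bundle. It therefore suffices to show that (i) degree-zero line bundles are Schottky, (ii) unipotent bundles are Schottky, and that the class of Schottky bundles is closed under (iii) tensor products and (iv) direct sums. Point (ii) is exactly the essential surjectivity in Theorem \ref{thm_main_for_tori}, since every unipotent $\mathcal O_X$-module lies in the image of $S$. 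Points (iii) and (iv) follow from the fact that $S$ is additive and monoidal: if $E_j\cong S(M_j)$ then $E_1\oplus E_2\cong S(M_1\oplus M_2)$ and $E_1\otimes E_2\cong S(M_1\otimes_\C M_2)$, because forming the associated bundle of a representation commutes with direct sums and tensor products (the latter via the Hopf-algebra coproduct on $\C\Sigma$).

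The heart of the argument, and the main obstacle, is point (i): every $L\in\mathrm{Pic}^0(X)$ is Schottky. Here I would use the explicit normal form $\Pi=(Z,I)$ of the period matrix, which forces $\lambda_{g+i}=e_i$ for $i=1,\dots,g$, so that $\lambda_{g+1},\dots,\lambda_{2g}$ is a $\C$-basis of $V$ and $\alpha:\Lambda\to\Sigma$ is precisely the quotient by the subgroup they generate. By the Appell--Humbert theorem (see \cite{gun}), $L\cong L_{\chi_0}$ for some (even unitary) character $\chi_0:\Lambda\to\C^*$. Two characters define the same holomorphic line bundle precisely when they differ by the restriction to $\Lambda$ of $e^{\ell}$ for a $\C$-linear functional $\ell\in\Hom_\C(V,\C)$. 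Since $\{e_i\}$ is a $\C$-basis, I can choose $\ell$ with $e^{\ell(e_i)}=\chi_0(e_i)^{-1}$, and then $\chi:=\chi_0\cdot(e^{\ell}|_\Lambda)$ represents the same bundle $L$ while being trivial on every $\lambda_{g+i}$. Hence $\chi$ factors through $\alpha$, exhibiting $L=L_\chi$ as a Schottky line bundle.

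Combining these ingredients closes the converse: given $E$ admitting a flat connection, each $E_i\cong L_i\otimes U_i$ is a tensor product of a Schottky line bundle and a Schottky unipotent bundle, hence Schottky by (iii), and $E\cong\bigoplus_i E_i$ is Schottky by (iv). I expect the delicate point to be (i): one must verify that the modification by $e^{\ell}$ genuinely lands in the kernel of $\Hom(\Lambda,\C^*)\to\mathrm{Pic}^0(X)$, and that the normal form $(Z,I)$ really identifies $\alpha$ with the quotient killing $\lambda_{g+1},\dots,\lambda_{2g}$, so that the adjusted character descends to $\Sigma$.
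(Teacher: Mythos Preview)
Your proposal is correct and follows essentially the same route as the paper: the easy direction is immediate, and the converse proceeds via the Matsushima--Morimoto decomposition into $L\otimes U$ summands, invoking Theorem~\ref{thm_main_for_tori} for the unipotent factor, a separate argument for degree-zero line bundles, and closure of Schottky bundles under tensor products and direct sums. Your treatment of point~(i) via Appell--Humbert and modification by $e^{\ell}|_{\Lambda}$ is precisely the content of the paper's Lemma~\ref{lem:line-bundle-Schottky}, which packages the same modification as the explicit gauge transformation $f(z)=\exp(\int_0^z\omega)$ in the style of \cite[Lemme~5.1]{mor}.
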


We also prove the corresponding result in the case of principal bundles. 
Let $G$ be any connected linear algebraic group defined over $\C$. As an
application of Theorem \ref{thm:Flat-Schottky-vector} we have (see Section
\ref{subsec_Principal_Schottky_Bundles}):

\begin{thm}
\label{thm:Flat-Schottky-principal} Let $P$ be a principal $G$-bundle
over a complex torus. Then, $P$ admits a flat holomorphic connection if and only
if $P$ is Schottky.
\end{thm}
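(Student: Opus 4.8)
The plan is to reduce the principal-bundle statement to its vector-bundle counterpart, Theorem~\ref{thm:Flat-Schottky-vector}, using the dictionary between principal $G$-bundles and their associated vector bundles. Throughout, a flat $G$-bundle will be encoded by a monodromy homomorphism $\bar\rho\colon\pi_1(X)\to G$ and a Schottky $G$-bundle by a homomorphism $\rho\colon\Sigma\to G$, the two being linked by $\alpha\colon\pi_1(X)\to\Sigma$; the theorem then amounts to the statement that every flat $G$-bundle is holomorphically isomorphic to a Schottky one, and conversely.

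I would dispose of the easy implication first. If $P$ is Schottky it is induced from some $\rho\colon\Sigma\to G$, so the representation $\bar\rho=\rho\circ\alpha\colon\pi_1(X)\to G$ produces the flat bundle $\widetilde X\times_{\bar\rho}G$ equipped with the connection descending from the trivial connection on the universal cover $\widetilde X$. Since by construction this flat bundle is $P$, the bundle $P$ carries a flat holomorphic connection. The same associated-bundle formalism records that, for every finite-dimensional $G$-module $V$, the vector bundle $P\times_G V$ is the Schottky (resp. flat) bundle attached to $V$ viewed as a $\Sigma$-module (resp. $\pi_1(X)$-module).

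For the converse, let $P$ be flat with monodromy $\bar\rho\colon\pi_1(X)\to G$. Since $\pi_1(X)\cong\Lambda\cong\Z^{2g}$ is abelian, the image of $\bar\rho$ is an abelian subgroup of $G$, and the (functorial) Jordan decomposition splits $\bar\rho$ into commuting homomorphisms $\bar\rho=s\cdot u$, where $s\colon\pi_1(X)\to G$ has image in a maximal torus $T$ and $u\colon\pi_1(X)\to G$ has unipotent image contained in the centralizer of $s(\pi_1(X))$. The semisimple part $s$ is a tuple of characters $\pi_1(X)\to\C^{*}$, and the associated $T$-bundle is a tuple of degree-zero line bundles; by the line-bundle case \cite{flo}---where the special form $\Pi=(Z,I)$ of the period matrix guarantees that the Schottky characters already exhaust $\mathrm{Pic}^{0}(X)$---each such line bundle is holomorphically isomorphic to one coming from a Schottky character, so $s$ may be replaced by some $s'\colon\Sigma\to T$ without changing the $T$-bundle. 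The unipotent part $u$ gives a flat unipotent bundle, which by Theorem~\ref{thm_main_for_tori} is Schottky, so $u$ may likewise be replaced by some $u'\colon\Sigma\to G$ through $\alpha$. Assembling $s'$ and $u'$ into a single homomorphism $\rho\colon\Sigma\to G$ then exhibits $P$ as Schottky.

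The principal obstacle is the final assembly. It is not enough that the semisimple and unipotent parts be individually Schottky, nor that each associated vector bundle $P\times_G V$ be Schottky object by object: one must produce a genuine reduction of the holomorphic $G$-bundle $P$ to a Schottky $G$-bundle, i.e. a single homomorphism $\rho\colon\Sigma\to G$, and verify that the resulting isomorphism is $G$-equivariant rather than merely an isomorphism of underlying bundles. Two points require care here. First, $S$ fails to be faithful on characters---distinct characters of $\Sigma$ can yield isomorphic line bundles---so one cannot simply invert $S$, and the argument must proceed through essential surjectivity as above. Second, after replacing $s,u$ by their Schottky models $s',u'$ one must ensure that $s'(\Sigma)$ and $u'(\Sigma)$ still commute inside $G$, so that their product is a homomorphism; this is arranged by choosing $s'$ in $T$ and $u'$ in the unipotent radical of the reductive centralizer $Z_G(T)$, and is most transparently checked after fixing a faithful embedding $G\hookrightarrow GL_n(\C)$ and invoking the classification of flat principal bundles over complex tori of Biswas--Gomez \cite{bigo}.
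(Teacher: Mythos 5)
Your proposal takes a genuinely different route from the paper, and it breaks down at exactly the point the paper's route is engineered to avoid. The paper never decomposes the monodromy of $P$ at all: it notes that a flat connection on $P$ induces one on $\Ad P$, applies Theorem~\ref{thm:Flat-Schottky-vector} to conclude that $\Ad P$ is a Schottky \emph{vector} bundle, and then climbs back to $P$ via Proposition~\ref{pro:Ad-Schottky}, whose point is that for flat $P$ with monodromy $\rho$, Schottkyness of $\Ad P$ forces $\rho(\ker\alpha)\subset\ker\Ad=ZG$ (connectedness of $G$ enters here). That is exactly Definition~\ref{def:principal-schottky}: a principal Schottky bundle only requires a representation of $\pi_1(X)$ sending $\ker\alpha$ \emph{into the center} $ZG$, not a homomorphism $\Sigma\to G$ as you assume throughout. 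You are thus trying to prove a strictly stronger statement than the theorem asserts, and this stronger target is what derails the argument.

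The concrete failure is in your treatment of the semisimple part: it is false in general that a homomorphism $s:\pi_1(X)\to G$ with commuting semisimple image lands in a maximal torus. Take $G=\mathsf{PGL}_2(\C)$, $X$ an elliptic curve, and let $\bar\rho$ send the two generators of $\pi_1(X)\cong\Z^2$ to the classes of $\mathrm{diag}(1,-1)$ and of the antidiagonal involution; these are semisimple and commute in $\mathsf{PGL}_2(\C)$ (their commutator in $GL_2(\C)$ is central), but they generate a Klein four-group, which lies in no maximal torus since maximal tori of $\mathsf{PGL}_2(\C)$ are isomorphic to $\C^{*}$ and have only cyclic finite subgroups. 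So there is no reduction of $s$ to a $T$-bundle and no tuple of degree-zero line bundles to feed into the line-bundle case; your argument cannot even start on this example, while the paper's adjoint-bundle route handles it with no extra effort (here $ZG=1$, so passing to $\Ad P$ loses nothing). Two further steps are also unproven: citing Theorem~\ref{thm_main_for_tori} for the unipotent part only gives a Schottky model of an associated unipotent \emph{vector} bundle, not a homomorphism $u':\Sigma\to G$ with $P_{u}\cong P_{u'}$ as principal $G$-bundles (one would have to rerun the explicit construction of Proposition~\ref{pro:unipotent-Schottky} inside a unipotent subgroup of $G$); and the assembly step you yourself flag as the ``principal obstacle'' is left as a gesture toward Biswas--G\'omez rather than an argument---isomorphism of the two pieces separately does not produce an isomorphism of $P$ with the assembled bundle. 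If you want a correct proof, follow the paper: pass to $\Ad P$, where Matsushima--Morimoto (Theorem~\ref{thm:flat-homogeneous-unipotent}) supplies the $L\otimes U$ structure and Theorem~\ref{thm:Flat-Schottky-vector} applies, and then use $\ker\Ad=ZG$ to descend, accepting the central ambiguity that Definition~\ref{def:principal-schottky} was written to allow.
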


The article finishes by putting together Theorems \ref{thm:Flat-Schottky-vector}
and
\ref{thm:Flat-Schottky-principal}, and the main result of \cite{bigo} to show
that not only 
the three notions of flat, Schottky and homogeneous are equivalent for a given
principal bundle 
$P$ over a complex torus, but also, when the first and second Chern class of $P$
both vanish, 
these are equivalent to the corresponding notions for the adjoint vector bundle
$\Ad P$.

Finally, we expect that the methods used in our paper are not only limited to
the
complex case, but may also be applied in the $p$-adic case,
 i.e., to the functors studied by Deninger-Werner \cite{dewe1}, \cite{dewe2} and
Faltings \cite{fal}, or also for the characteristic $p$ case.

\section{Schottky functors}\label{sec_Schottky_functors}

Let $X$ be a connected and locally simply connected topological space. Let $R$
be a commutative ring with unit, denote by $R_{X}$ the constant sheaf of rings
on $X$ defined by $R$, and let ${\mathcal O}_{X}$ be a sheaf of rings on $X$. We
further assume that there is a morphism of sheaves of rings $i:R_{X}
\longrightarrow {\mathcal O}_{X}$, i.e., ${\mathcal O}_{X}$ is an
$R_{X}$-algebra. Let $\pi_{1}(X)$ denote the fundamental group of $X$ (for some
base point), and let $\Sigma$ be an arbitrary group, together with a group
homomorphism $\alpha:\pi_{1}(X) \longrightarrow \Sigma$. We are going to
describe a functor
$$
S: R\Sigma \textrm{-}{mod} \longrightarrow {\mathcal O}_{X} \textrm{-}{mod},
$$
from the category of modules over the group ring $R\Sigma$ to the category of
${\mathcal O}_{X}$-modules on $X$. 

\begin{con}\label{functor}
The group homomorphism $\alpha$ induces a morphism of group rings
$\tilde{\alpha}:R\pi_{1}(X) \longrightarrow R\Sigma$. If $\rho $ is an $R\Sigma$
module, then the change of rings functor $\tilde{\alpha}^{*}$ maps $\rho $ to an
$R\pi_{1}(X)$-module $\tilde{\alpha}^{*}(\rho)$. The Riemann Hilbert functor
$RH$ \cite[Theorem 2.5.15]{sza} maps the $R\pi_{1}(X)$-module
$\tilde{\alpha}^{*}(\rho)$ to a locally constant sheaf
$RH(\tilde{\alpha}^{*}(\rho))$ of $R$-modules on $X$. 
Finally, sending locally constant sheaves of $R$-modules to
their tensor product over $i$ with ${\mathcal O}_{X}$, i.e., $M
\mapsto M \otimes _{R _{X}}  {\mathcal O}_{X}$, defines a functor $i^{*}$ from
the category of locally constant
sheaves of
$R$-modules to the category of ${\mathcal O}_{X}$-modules.

In this way, we can define the functor $S$ as the composition of these three
functors,
so that, for every $R\Sigma$-module $\rho$, we have a well defined ${\mathcal
O}_{X}$-\
module:
\[
S(\rho):=i^{*} \circ RH\circ \tilde{\alpha}^{*}(\rho).
\]
\end{con}

\begin{Def}
The functor defined in Construction \ref{functor} above is called a Schottky
functor 
when $\Sigma$ is a free group and $\alpha:\pi_{1}(X) \longrightarrow \Sigma$ is 
surjective. We usually also assume that the ring $R$ is the complex field, 
$X$ is a complex manifold and in this case ${\mathcal O}_{X}$ is the sheaf
of holomorphic functions on $X$.
\end{Def}

\begin{prop}\label{prop_schottky_is_exact}
If $R=\C$ and $X$ is a complex manifold, then the Schottky functor $S$ is
faithful,
exact, additive, and compatible with direct sums and tensor
products.
\end{prop}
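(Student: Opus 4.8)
The plan is to decompose the Schottky functor as $S=i^{*}\circ RH\circ\tilde{\alpha}^{*}$, exactly as in Construction \ref{functor}, and to verify each of the five asserted properties for the three constituent functors separately. Since a composite of faithful (respectively exact, additive, direct-sum-preserving, tensor-preserving) functors again enjoys the same property, the proposition will follow at once from these three local analyses.

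First I would treat the restriction-of-scalars functor $\tilde{\alpha}^{*}$ attached to the ring homomorphism $\tilde{\alpha}:\C\pi_{1}(X)\to\C\Sigma$. Because $\alpha$ is surjective, so is $\tilde{\alpha}$, and restriction of scalars along a surjective ring map is fully faithful: a $\C\pi_{1}(X)$-linear map between modules on which $\C\Sigma$ acts is automatically $\C\Sigma$-linear. This functor leaves the underlying $\C$-vector spaces and $\C$-linear maps unchanged, so it is manifestly exact and additive; admitting both a left adjoint (extension of scalars) and a right adjoint (coinduction), it commutes with direct sums. For tensor products one notes that on both sides $\otimes$ is taken over $\C$ with the diagonal action and that the $\pi_{1}(X)$-action factors through $\alpha$, giving a natural isomorphism $\tilde{\alpha}^{*}(M\otimes_{\C}N)\cong\tilde{\alpha}^{*}(M)\otimes_{\C}\tilde{\alpha}^{*}(N)$.

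Next, $RH$ is an equivalence of categories between $\C\pi_{1}(X)$-modules and locally constant sheaves of $\C$-modules on $X$ by \cite[Theorem 2.5.15]{sza}. As an equivalence it is automatically fully faithful, exact, additive and direct-sum-preserving; the Riemann--Hilbert correspondence is moreover monoidal, so $RH(M\otimes_{\C}N)\cong RH(M)\otimes_{\C_{X}}RH(N)$, which supplies the tensor compatibility.

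The substantive step is the third functor $i^{*}=-\otimes_{\C_{X}}\mathcal{O}_{X}$. Additivity and compatibility with direct sums are formal, as $i^{*}$ is a tensor functor and hence commutes with colimits, while the tensor compatibility $i^{*}(M\otimes_{\C_{X}}N)\cong i^{*}(M)\otimes_{\mathcal{O}_{X}}i^{*}(N)$ comes from the standard base-change isomorphism $(M\otimes_{\C_{X}}\mathcal{O}_{X})\otimes_{\mathcal{O}_{X}}(N\otimes_{\C_{X}}\mathcal{O}_{X})\cong(M\otimes_{\C_{X}}N)\otimes_{\C_{X}}\mathcal{O}_{X}$. The delicate points are faithfulness and, above all, exactness, which I would verify on stalks: for $x\in X$ one has $(\C_{X})_{x}=\C$ and a natural identification $(M\otimes_{\C_{X}}\mathcal{O}_{X})_{x}\cong M_{x}\otimes_{\C}\mathcal{O}_{X,x}$, where $M_{x}$ is an honest $\C$-vector space. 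Since tensoring over the field $\C$ is both exact and faithful, applying $-\otimes_{\C}\mathcal{O}_{X,x}$ to a short exact sequence of stalks, or to a nonzero stalk map, again produces an exact sequence, respectively a nonzero map; as exactness and the vanishing of a sheaf morphism are both stalk-local, $i^{*}$ is exact and faithful. The main obstacle is precisely this exactness claim, since tensor functors are in general only right exact; what rescues the argument is that every object produced by $RH$ is a locally constant sheaf and therefore stalk-wise free over the field $\C_{X}$, so no higher $\mathrm{Tor}$ terms obstruct exactness. Composing the three functors then yields all five properties for $S$.
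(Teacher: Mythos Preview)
Your proof is correct and follows essentially the same approach as the paper: decompose $S=i^{*}\circ RH\circ\tilde{\alpha}^{*}$ and verify each property for each factor, with the exactness and faithfulness of $i^{*}$ checked on stalks. Your final caveat about needing locally constant input is harmless but in fact unnecessary, since $(\C_{X})_{x}=\C$ is a field and hence every stalk is already flat, so $i^{*}$ is exact on all $\C_{X}$-modules.
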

\begin{proof}
It suffices to show these properties (if they apply) for each of the functors
$\tilde{\alpha}^{*}$, $RH$ and $i^{*}$. The change of rings functor
$\tilde{\alpha}^{*}$ is additive and exact by \cite[CH 8. Proposition
8.33]{rot}, faithful by proof of \cite[CH 8. Proposition 8.33]{rot}, and it is
easy to see that it is compatible with direct sums and tensor products. The
equivalence of categories functor $RH$ is additive, exact, and compatible with
direct sums and tensor products because the inverse functor $RH^{-1}$ is induced
by the fiber functor mapping a sheaf ${\mathcal F}$ to its stalk ${\mathcal
F}_{x}$ at $x$ \cite[Theorem 2.5.14]{sza}, and this functor satisfies all these
properties. $RH$ is obviously faithful as well since $RH$ is an equivalence.
Finally, it can be checked directly that the functor $i^{*}$ is
additive and
compatible with direct sums and tensor products. Exactness and faithfulness
can be seen by looking at the stalks.
\end{proof}

As mentioned in the introduction, two cases are especially important here. Let
$\Sigma$ be a free group on $g$ generators.
The classical case is when $X$ is a Riemann surface of genus $g$
and $\alpha$ is defined as before Theorem \ref{thm_main_for_riemann_surfaces}. 
The other case is when
$X$ is a complex torus of dimension $g$ and $\alpha$ is as given just before
the statement of Theorem \ref{thm_main_for_tori}

\section{Schottky functors and
cohomology}\label{sec_schottky_functors_and_cohomology}

Let us assume now that $\Sigma$ is an arbitrary group and that $X$ is a complex
manifold, so that $S$ is additive and exact. In both abelian categories, $\C
\Sigma\textrm{-mod}$ and ${\mathcal
O}_{X}\textrm{-mod}$, one can compute \emph{Ext} groups using injective
resolutions. We will investigate the Schottky functor in this context.

Let $A$ and $B$ be two $\C\Sigma$-modules, let $\mathbf{I}$ denote an injective
resolution of $B$ and let $\mathbf{J}$ denote an
injective
resolution of $S(B)$. Then, since $\mathbf{J}$ is injective and $S(\mathbf{I})$
is an exact sequence, the identity
$f:=id_{S(B)}$ can be extended to a chain map $F:S(\mathbf{I})
\rightarrow
\mathbf{J}$ \cite[Theorem 6.16]{rot}. 
Then, after applying $Hom$, we obtain the following map of chain complexes
$$
Hom_{\C \Sigma}(A,\mathbf{I}) \overset{S_{*}}{\rTo} Hom_{{\mathcal
O}_{X}}(S(A),S(\mathbf{I})) \overset{F_{*}}{\rTo} Hom_{{\mathcal
O}_{X}}(S(A),\mathbf{J}).
$$
\begin{prop}\label{prop_naturality_of_S_and_cohomology}
The previous construction defines a morphism $S_{*}:= S_{*} \circ F_{*}$ of
graded $\C$-vector spaces
$$
S_{*}: Ext^{*}_{\C \Sigma}(A,B) \rightarrow Ext^{*}_{{\mathcal
O}_{X}}(S(A),S(B))
$$
that is natural in the following sense:
Let
$$
0 \rightarrow B' \overset{i}{\rightarrow} B \overset{p}{\rightarrow} B''
\rightarrow 0
$$
be an exact sequence of $\C\Sigma$-modules. Then there is a commutative diagram
of $\C$-vector spaces with exact rows,
\begin{diagram}[width=2pt,height=2em]
Ext_{\C\Sigma}^{n}(A,B') & \rTo^{i_{*}} &
Ext_{\C\Sigma}^{n}(A,B) & \rTo^{p_{*}} & Ext_{\C\Sigma}^{n}(A,B'') &
\rTo^{\delta} & Ext_{\C\Sigma}^{n+1}(A,B') \\
 \dTo^{S_{*}} &  & \dTo^{S_{*}} &  & \dTo^{S_{*}} & & \dTo^{S_{*}}  \\
 Ext_{{\mathcal O}_{X}}^{n}(S(A),S(B')) & \rTo^{S(i)_{*}} &
Ext_{{\mathcal O}_{X}}^{n}(S(A),S(B)) & \rTo^{S(p)_{*}} & Ext_{{\mathcal
O}_{X}}^{n}(S(A),S(B'')) & \rTo^{\delta} & Ext_{{\mathcal
O}_{X}}^{n+1}(S(A),S(B')). \\
\end{diagram}
\end{prop}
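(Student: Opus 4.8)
The plan is to split the statement into two independent assertions: first, that $S_{*} := S_{*}\circ F_{*}$ descends to a well-defined map on Ext groups, independent of every choice made in its construction; and second, that the resulting graded map is natural with respect to short exact sequences in the second variable. The essential inputs for the first assertion are the comparison theorem of homological algebra (any two chain maps between resolutions lifting a fixed map of objects are chain homotopic) together with the additivity of $Hom$ and of $S$; the essential input for the second is the naturality of the connecting homomorphism produced by the snake lemma, which I would reduce to a formal verification after arranging compatible resolutions.

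First I would verify that $S_{*}\circ F_{*}$ is a chain map. The map $S_{*}:Hom_{\C\Sigma}(A,\mathbf{I})\to Hom_{\mathcal{O}_{X}}(S(A),S(\mathbf{I}))$ commutes with the differentials because $S$ is a functor and the differentials on both complexes are induced by that of $\mathbf{I}$ (respectively its image $S(\mathbf{I})$), while $F_{*}$ is a chain map because $F$ is. Passing to cohomology yields the graded map $S_{*}:Ext^{*}_{\C\Sigma}(A,B)\to Ext^{*}_{\mathcal{O}_{X}}(S(A),S(B))$, the target being correctly identified since $\mathbf{J}$ is an injective resolution of $S(B)$. To see independence of the lift $F$, I would observe that any two lifts $F,F'$ of $id_{S(B)}$ along $S(\mathbf{I})\to\mathbf{J}$ are chain homotopic by the comparison theorem (valid because $S(\mathbf{I})$ is exact, by exactness of $S$, and $\mathbf{J}$ is injective); since $Hom_{\mathcal{O}_{X}}(S(A),-)$ is additive, it carries the homotopy to a homotopy, so $F_{*}$ and $F'_{*}$ agree on cohomology. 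Independence of the resolutions $\mathbf{I}$ and $\mathbf{J}$ is the standard fact that Ext is well defined up to canonical isomorphism.

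For the squares built from $i_{*}$ and $p_{*}$ (those not involving $\delta$), I would argue that the whole construction is functorial in the second argument: a morphism of $\C\Sigma$-modules lifts to a chain map of injective resolutions, $S$ sends this to a chain map of the $S$-images, and the induced squares of $Hom$ complexes commute up to homotopy, hence commute on cohomology. These squares are then precisely the images under $S_{*}$ of the functoriality of Ext in the second variable.

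The hard part will be the square involving the connecting homomorphism $\delta$. Here I would use the dual horseshoe lemma: starting from $0\to B'\to B\to B''\to 0$, choose injective resolutions $\mathbf{I}',\mathbf{I},\mathbf{I}''$ fitting into a short exact sequence of complexes $0\to\mathbf{I}'\to\mathbf{I}\to\mathbf{I}''\to 0$ that is split in each degree, and similarly injective resolutions $\mathbf{J}',\mathbf{J},\mathbf{J}''$ of $S(B'),S(B),S(B'')$ forming a degreewise split short exact sequence. Since $S$ is exact and additive it preserves degreewise split short exact sequences, so applying it to the first gives $0\to S(\mathbf{I}')\to S(\mathbf{I})\to S(\mathbf{I}'')\to 0$, and the comparison maps $F',F,F''$ can be chosen to form a morphism from this onto $0\to\mathbf{J}'\to\mathbf{J}\to\mathbf{J}''\to 0$. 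Applying $Hom_{\C\Sigma}(A,-)$ and $Hom_{\mathcal{O}_{X}}(S(A),-)$ keeps the sequences exact precisely because they are degreewise split, and the maps $\delta$ in the statement are the resulting snake-lemma connecting homomorphisms; commutativity of the $\delta$-square is then exactly their naturality with respect to the morphism of short exact sequences of complexes given by $S_{*}\circ F_{*}$. The main obstacle, and the one point requiring genuine care, is to check that the lifts $F',F,F''$ can be chosen compatibly with the short exact sequence structure so that $F$ restricts to $F'$ and induces $F''$; this follows once more from the comparison theorem applied in the split setting, after which the remaining verifications are formal.
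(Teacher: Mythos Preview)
Your proposal is correct and follows essentially the same route as the paper: both use the (dual) horseshoe lemma to produce degreewise split short exact sequences of injective resolutions on each side, construct compatible lifts $F',F,F''$ forming a morphism of short exact sequences of complexes, and then invoke the naturality of the long exact sequence in cohomology. The paper handles the compatible-lift step by citing Cartan--Eilenberg (V, \S 2, Prop.\ 2.3) directly, whereas you describe it as the comparison theorem in the split setting; these are the same ingredient, and your identification of this as the one genuinely delicate point matches the paper's emphasis.
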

\begin{proof}
Using the injective version of the Horseshoe lemma \cite[Proposition
6.24]{rot} we may
find injective resolutions $\mathbf{I'}$, $\mathbf{I}$ and $\mathbf{I''}$ of
$B'$, $B$ and $B''$, respectively, and lifts $\tilde{i}$ and $\tilde{p}$ of $i$
and
$p$, respectively, such that they fit into a normal exact sequence (cf. \cite[V,
Section 2]{caei})
$$
0 \rightarrow \mathbf{I'} \overset{\tilde{i}}{\rightarrow} \mathbf{I}
\overset{\tilde{p}}{\rightarrow} \mathbf{I''} \rightarrow 0.
$$
We note that applying $S$ to this normal exact sequence of complexes again gives
normal exact sequences. Using again the Horseshoe lemma we may
find injective resolutions $\mathbf{J'}$, $\mathbf{J}$ and $\mathbf{J''}$ of
$S(B')$, $S(B)$ and $S(B'')$, respectively, and lifts $\widetilde{S(i)}$ and
$\widetilde{S(p)}$ of $S(i)$ and $S(p)$, respectively, such that they fit into
an
exact sequence 
$$
0 \rightarrow \mathbf{J'} \overset{\widetilde{S(i)}}{\rightarrow} \mathbf{J}
\overset{\widetilde{S(p)}}{\rightarrow} \mathbf{J''} \rightarrow 0.
$$ 
As above, we may also find chain maps $F':S(\mathbf{I'}) \rightarrow
\mathbf{J'}$ and $F'':S(\mathbf{I''}) \rightarrow \mathbf{J''}$, lifting the
identities $f'=id_{S(B')}$ and $f''=id_{S(B'')}$, respectively. Furthermore,
since $\mathbf{J'}$ is injective and $S(\mathbf{I''})$ is acyclic, we
can apply the injective version of 
\cite[V \S 2 Proposition 2.3]{caei} and
find a chain map 
$F:S(\mathbf{I}) \rightarrow
\mathbf{J}$ above the identity map $f=id_{S(B)}$, in such a way that there is a
commutative diagram of complexes
\begin{diagram}[width=2pt,height=2em]
0 & \rTo & S(\mathbf{I'}) & \rTo^{S(\tilde{i})}  & S(\mathbf{I}) &
\rTo^{S(\tilde{p})} &
S(\mathbf{I''}) & \rTo &0 \\
 &&\dTo ^{F'}&   & \dTo^{F}  &   & \dTo^{F''}  &  &   &   &   \\
0 & \rTo & \mathbf{J'} & \rTo^{\widetilde{S(i)}} 
& \mathbf{J} & \rTo^{\widetilde{S(p)}} & \mathbf{J''} & \rTo
&0. \\
\end{diagram}

After applying $Hom$ and adding the initial sequence, we obtain the following
commutative diagram
\begin{diagram}[width=2pt,height=2em]
0 & \rTo & Hom_{\C \Sigma}(A,\mathbf{I'}) & \rTo^{\tilde{i}_{*}}  & Hom_{\C
\Sigma}(A,\mathbf{I}) & \rTo^{\tilde{p}_{*}} &
Hom_{\C \Sigma}(A,\mathbf{I''}) & \rTo &0 \\
 &&\dTo ^{S_{*}}&   & \dTo^{S_{*}}  &   & \dTo^{S_{*}}  &  &   &   &   \\
0 & \rTo & Hom_{{\mathcal O}_{X}}(S(A),S(\mathbf{I'})) &
\rTo^{S(\tilde{i})_{*}} 
& Hom_{{\mathcal
O}_{X}}(S(A),S(\mathbf{I})) & \rTo^{S(\tilde{p})_{*}} & Hom_{{\mathcal
O}_{X}}(S(A),S(\mathbf{I''})) & \rTo 
&0 \\
 &&\dTo ^{\psi'_{*}}&   & \dTo^{\psi_{*}}  &   & \dTo^{\psi''_{*}}  &  &   &   &
  \\
0 & \rTo & Hom_{{\mathcal O}_{X}}(S(A),\mathbf{J'}) &
\rTo^{\widetilde{S(i)})_{*}} 
& Hom_{{\mathcal
O}_{X}}(S(A),\mathbf{J}) & \rTo^{\widetilde{S(p)})_{*}} & Hom_{{\mathcal
O}_{X}}(S(A),\mathbf{J''}) & \rTo
&0. \\
\end{diagram}
The top and the bottom sequences are exact, since $\mathbf{I}'$ and
$\mathbf{J}'$ are sequences of
injective objects. Now we can omit the horizontal sequence in the middle and and
consider the
vertical composite maps. Then we can apply \cite[Theorem 6.13]{rot} which shows
the claim.
\end{proof}

Consider now an extension of $A$ by $B$ in the category of $\C\Sigma$-modules,
i.e., an exact sequence
\begin{diagram}[width=2pt,height=2em]
0 & \rTo & B  & \rTo & E & \rTo & A & \rTo & 0. \\
\end{diagram}
Such extensions are classified by the \emph{Yoneda Ext} groups (cf. \cite[Ch.
7.2]{rot}
or \cite[Ch VII]{mit}) that we will denoted by
$YExt^{1}(A,B)$. Let us recall the comparison isomorphism (cf. \cite[Ch 7.2
Lemma
7.27 and Theorem 7.35]{rot} or the
 injective version in \cite[Ch VII, Section 7]{mit}) 
between $Ext^{1}(A,B)$ and $YExt^{1}(A,B)$.  
Let $\mathbf{I}$
be an injective resolution of $B$. Then the identity $\phi=id_{B}$ can be lifted
to a chain map $\Phi$
\begin{diagram}[width=2pt,height=2em]
0 & \rTo & B  & \rTo & I^{0} & \rTo & I^{1}&  \rTo & I^{2} & \rTo & \ldots \\
 &  & \uTo^{id_{B}}  &  & \uTo^{\Phi_{0}} &  & \uTo ^{\Phi_{1}} &  &\uTo
^{\Phi_{2}} &&  \\
0 & \rTo & B  & \rTo & E & \rTo & A & \rTo & 0. & & \\
\end{diagram}
Because of the quadrant on the right the morphism $\Phi_{1}$ represents a
cocycle class $[\Phi_{1}]$ in
$Ext^{1}_{\C \Sigma}(A,B)$. An analogous reasoning applies also to extensions
in the category of ${\mathcal
O}_{X}$-modules. Moreover, the functor $S$ defines a canonical morphism
$YExt^{1}_{\C \Sigma}(A,B) \rightarrow YExt^{1}_{{\mathcal O}_{X}}(S(A),S(B))$
that
we will also denote by $S_{*}$. We have the following comparison result:
\begin{prop}\label{prop_compatibility_Yoneda_Derived_ext}
The following diagram is commutative:
\begin{diagram}[width=2pt,height=2em]
 YExt^{1}_{\C \Sigma}(A,B) & \rTo^{C_{\C \Sigma}}_{\simeq} & Ext^{1}_{\C
\Sigma}(A,B) \\
 \dTo ^{S_{*}} &  & \dTo ^{S_{*}} \\
 YExt^{1}_{{\mathcal O}_{X}}(S(A),S(B)) & \rTo^{C_{{\mathcal O}_{X}}}_{\simeq} &
Ext^{1}_{{\mathcal
O}_{X}}(S(A),S(B)) \\
\end{diagram}
where $C_{\C \Sigma}$ and $C_{{\mathcal O}_{X}}$ are the comparison
isomorphisms.
\end{prop}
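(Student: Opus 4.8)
The plan is to evaluate both composites of the square on an arbitrary Yoneda class and show they agree by a uniqueness-up-to-homotopy argument. Fix an extension $\xi:\ 0\to B\to E\to A\to 0$ of $\C\Sigma$-modules representing a class in $YExt^{1}_{\C\Sigma}(A,B)$, fix an injective resolution $\mathbf{I}$ of $B$, and let $\Phi$ be the chain map lifting $id_{B}$ from the exact complex $0\to B\to E\to A\to 0$ (regarded as a, not necessarily injective, resolution of $B$, with $E$ in degree $0$ and $A$ in degree $1$) into $\mathbf{I}$, exactly as in the comparison diagram preceding the statement; then $C_{\C\Sigma}(\xi)=[\Phi_{1}]$. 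Chasing the square first to the right and then down, and unwinding the definition of $S_{*}$ on derived $Ext$ supplied by Proposition \ref{prop_naturality_of_S_and_cohomology}, I obtain the class $[F_{1}\circ S(\Phi_{1})]\in Ext^{1}_{{\mathcal O}_{X}}(S(A),S(B))$, where $F:S(\mathbf{I})\to\mathbf{J}$ is the transport chain map over $id_{S(B)}$ and $\mathbf{J}$ is an injective resolution of $S(B)$.

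Next I would compute the other composite. Since $S$ is exact (Proposition \ref{prop_schottky_is_exact}), applying $S$ to $\xi$ yields an extension $S(\xi):\ 0\to S(B)\to S(E)\to S(A)\to 0$, whose class is $S_{*}(\xi)\in YExt^{1}_{{\mathcal O}_{X}}(S(A),S(B))$. Because both the comparison isomorphism $C_{{\mathcal O}_{X}}$ and the map $S_{*}$ on derived $Ext$ are independent of the chosen injective resolution of $S(B)$, I am free to compute $C_{{\mathcal O}_{X}}(S(\xi))$ using the \emph{same} resolution $\mathbf{J}$ already used to define $S_{*}$. Thus $C_{{\mathcal O}_{X}}(S(\xi))=[\Psi_{1}]$, where $\Psi$ is a chain map lifting $id_{S(B)}$ from the exact complex $0\to S(B)\to S(E)\to S(A)\to 0$ into $\mathbf{J}$. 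The whole proposition therefore reduces to the single identity $[\Psi_{1}]=[F_{1}\circ S(\Phi_{1})]$ inside $H^{1}(Hom_{{\mathcal O}_{X}}(S(A),\mathbf{J}))=Ext^{1}_{{\mathcal O}_{X}}(S(A),S(B))$.

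The key observation is that both $\Psi$ and $F\circ S(\Phi)$ are chain maps over $id_{S(B)}$ with the \emph{same} source and the \emph{same} target. Indeed, applying the functor $S$ to the commutative lifting diagram defining $\Phi$, and using exactness of $S$ to see that the augmentation $S(B)\to S(E)\to S(A)$ is exact, shows that $S(\Phi)$ is a chain map lifting $id_{S(B)}$ from the resolution $S(\xi)$ of $S(B)$ into the acyclic (but not injective) complex $S(\mathbf{I})$; post-composing with $F$, the map $F\circ S(\Phi)$ lifts $id_{S(B)}$ from $S(\xi)$ into $\mathbf{J}$. On the other hand $\Psi$ lifts the same $id_{S(B)}$ from the same resolution $S(\xi)$ into the same injective resolution $\mathbf{J}$. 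Since $S(\xi)$ is an exact resolution of $S(B)$ and $\mathbf{J}$ consists of injectives, the uniqueness-up-to-homotopy clause of the comparison theorem (the injective/dual form of \cite[Theorem 6.16]{rot}) forces $\Psi\simeq F\circ S(\Phi)$ as chain maps. In degree $1$ this homotopy gives $\Psi_{1}=F_{1}\circ S(\Phi_{1})+(\text{coboundary})$ in $Hom_{{\mathcal O}_{X}}(S(A),\mathbf{J})$, so that
\[
[\Psi_{1}]=[F_{1}\circ S(\Phi_{1})],
\]
which is exactly the commutativity of the square.

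The step I expect to require the most care is verifying that $S(\Phi)$ genuinely lifts $id_{S(B)}$: one must check that $S$ carries the augmented lifting diagram of $\Phi$ to an augmented lifting diagram over $S(B)$, so that it is the exactness of $S(\xi)$, and not mere functoriality, that legitimizes the application of the comparison theorem at the final step. It is also essential to make explicit that $Ext^{1}$ is computed, on the ${\mathcal O}_{X}$ side, by the single fixed injective resolution $\mathbf{J}$ throughout — both in the definition of $S_{*}$ and in $C_{{\mathcal O}_{X}}$ — so that the two cocycles $\Psi_{1}$ and $F_{1}\circ S(\Phi_{1})$ literally lie in the same $Hom$-complex and the homotopy comparison of their classes is meaningful.
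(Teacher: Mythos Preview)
Your proposal is correct and follows essentially the same route as the paper: compute $S_{*}\circ C_{\C\Sigma}(\xi)=[F_{1}\circ S(\Phi_{1})]$ and $C_{{\mathcal O}_{X}}\circ S_{*}(\xi)=[\Psi_{1}]$, then observe that $\Psi$ and $F\circ S(\Phi)$ are both chain maps over $id_{S(B)}$ from the exact complex $S(\xi)$ into the injective resolution $\mathbf{J}$, hence homotopic. The paper phrases the last step as ``because the comparison isomorphism $C_{{\mathcal O}_{X}}$ is well defined,'' which is exactly the uniqueness-up-to-homotopy you invoke; your added remarks about using the same $\mathbf{J}$ throughout and about $S(\Phi)$ genuinely lifting $id_{S(B)}$ make explicit what the paper leaves implicit.
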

\begin{proof}
 Let $0 \rightarrow B  \rightarrow E \rightarrow A \rightarrow 0 $ represent an
extension class $[E]$ in $YExt^{1}_{\C \Sigma}(A,B)$. 
Then $C_{\C \Sigma}$ maps $[E]$ to the class $[\Phi_{1}]$, and $S_{*}$ maps
$[\Phi_{1}]$
to the class $[ F_{1} \circ S(\Phi_{1}) ]$, where $F_{1} \circ S(\Phi_{1})$
 is the horizontal map in the diagram 
\begin{diagram}[width=2pt,height=2em]
0 & \rTo & S(B)  & \rTo & J^{0} & \rTo & J^{1}&  \rTo & \ldots \\
 &  & \uTo^{id_{S(B)}}  &  & \uTo^{F_{0}} &  & \uTo ^{F_{1}} &  &  \\
0 & \rTo & S(B)  & \rTo & S(I^{0}) & \rTo & S(I^{1}) & \rTo & \ldots\\
 &  & \uTo^{S(id_{B})}  &  & \uTo^{S(\Phi _{0})} &  & \uTo ^{S(\Phi_{1})} &  & 
\\
0 & \rTo & S(B)  & \rTo & S(E) & \rTo & S(A) & \rTo & 0. \\
\end{diagram}
On the other side $\psi =id_{S(B)}$ can be extended to a chain map $\Psi$ 
\begin{diagram}[width=2pt,height=2em]
0 & \rTo & S(B)  & \rTo & J^{0} & \rTo & J^{1}&  \rTo & \ldots \\
 &  & \uTo^{id_{S(B)}}  &  & \uTo^{\Psi_{0}} &  & \uTo ^{\Psi_{1}} &  &  \\
0 & \rTo & S(B)  & \rTo & S(E) & \rTo & S(A) & \rTo & 0, \\
\end{diagram}
and so the class $[E]$ under the map $C_{{\mathcal O}_{X}} \circ S_{*}$  is
mapped to the class $[\Psi_{1}]$.

Since $\Psi$ and $F \circ S(\Phi)$ are two chain maps over $id_{S(B)}$, and
because the
comparison isomorphism $C_{{\mathcal O}_{X}}$ is well defined, the two morphisms
$\Psi_{1}$ and $F_{1} \circ S(\Phi_{1})$
must represent the same
class in $Ext^{1}_{{\mathcal O}_{X}}(S(A),S(B))$. Therefore, the diagram
commutes.
\end{proof}

\begin{rem}
Although it is not necessary here, the results of this chapter can be stated in
more generality. We would like
to refer the interested reader to an earlier version (arxiv.org/abs/1102.3006v2,
Sections 2 and 3) of this preprint for further details.
\end{rem}

Finally, we calculate the dimensions of some Ext groups:

\begin{prop}\label{prop_explicit_computation_some_Ext}
 \begin{itemize}
\item [a)] If $X$ is a complex torus of dimension $g$, then
$Ext^{n}_{{\mathcal O}_{X}}({\mathcal O}_{X},{\mathcal O}_{X})$ is a $\C$-vector
space of dimension $1$, if $n=0$, and of dimension $g$, if $n=1$.
\item [b)] If $X$ is a compact Riemann surface of genus $g$,
then $Ext^{n}_{{\mathcal O}_{X}}({\mathcal O}_{X},{\mathcal O}_{X})$ is a
$\C$-vector space of dimension $1$, if $n=0$, and of dimension $g$, if $n=1$. For
any sheaf $F$ of ${\mathcal O}_{X}$-modules, the $\C$-vector spaces
$Ext^{n}_{{\mathcal O}_{X}}({\mathcal O}_{X},F)$ vanish for all $n\geq 2$.
\item[c)] If $\Sigma$ is a free abelian group of rank $g$, then $Ext^{n}_{\C
\Sigma}(\C,\C)$ is a $\C$-vector
space of dimension $1$, if $n=0$, and of dimension $g$, if $n=1$.
\item[d)] If $\Sigma$ is a free group of $g$ free generators, then
$Ext^{0}_{\C\Sigma}(\C,\C)$ is a free $\C$-module of rank $1$, and
$Ext^{1}_{\C\Sigma}(\C,\C)$ is a free $\C$-module of rank $g$. For any
$\C\Sigma$-module $M$ the $\C$-vector spaces
$Ext^{n}_{\C\Sigma}(\C,M)$ vanish for all $n\geq 2$.
\end{itemize}
\end{prop}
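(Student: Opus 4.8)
The four parts divide into a sheaf-theoretic pair (a),(b) and a group-theoretic pair (c),(d), and in each case my plan is to reduce the $Ext$ computation to a cohomology theory that is evaluated by one short, explicit resolution.

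For (a) and (b) the first step is the standard identification $Ext^{n}_{\mathcal{O}_X}(\mathcal{O}_X,F)\cong H^{n}(X,F)$ for every $\mathcal{O}_X$-module $F$: since $\mathcal{O}_X$ is locally free of rank one, the sheaf $\mathcal{E}xt^{q}(\mathcal{O}_X,F)$ vanishes for $q>0$ and equals $F$ for $q=0$, so the local-to-global spectral sequence for $Ext$ collapses. Taking $F=\mathcal{O}_X$, I would then read off $\dim H^{0}(X,\mathcal{O}_X)=1$ (as $X$ is compact and connected) and $\dim H^{1}(X,\mathcal{O}_X)=g$ from Hodge theory. On a $g$-dimensional complex torus one has $H^{n}(X,\mathcal{O}_X)\cong H^{0,n}(X)\cong\bigwedge^{n}\C^{g}$, of dimension $\binom{g}{n}$, giving $1$ and $g$; on a genus-$g$ Riemann surface $\dim H^{1}(X,\mathcal{O}_X)=g$ is the definition of the genus (equivalently, Serre duality against $H^{0}(X,\Omega^{1}_X)$).

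The only non-formal point is the vanishing $H^{n}(X,F)=0$ for $n\geq 2$ and \emph{arbitrary} $F$ in (b): since $X$ has real dimension two, topological covering-dimension bounds give vanishing only for $n\geq 3$, so one must exploit the complex structure. My plan is to use the Dolbeault resolution $0\to\mathcal{O}_X\to\mathcal{E}_X\to\mathcal{E}^{0,1}_X\to 0$, with $\mathcal{E}_X$ the sheaf of smooth functions and the second arrow the $\bar\partial$-operator; this is exact on a curve by the Dolbeault--Grothendieck lemma, and $\mathcal{E}_X$, $\mathcal{E}^{0,1}_X$ are \emph{flat} over $\mathcal{O}_X$ by Malgrange's flatness theorem. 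Tensoring this length-one flat resolution with $F$ and using $Tor_{1}(\mathcal{E}^{0,1}_X,F)=0$ produces a short exact sequence $0\to F\to\mathcal{E}_X\otimes_{\mathcal{O}_X}F\to\mathcal{E}^{0,1}_X\otimes_{\mathcal{O}_X}F\to 0$ whose two right-hand terms are modules over the fine sheaf $\mathcal{E}_X$, hence soft and $\Gamma$-acyclic; this length-one acyclic resolution of $F$ forces $H^{n}(X,F)=0$ for $n\geq 2$. I expect the flatness of smooth over holomorphic germs to be the technical heart of the whole proposition.

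For (c) and (d) the analogous first step is $Ext^{n}_{\C\Sigma}(\C,M)\cong H^{n}(\Sigma,M)$, group cohomology with trivial module $\C$ in the first slot. For (c), with $\Sigma=\Z^{g}$, I would resolve $\C$ over $\C\Sigma\cong\C[t_{1}^{\pm 1},\dots,t_{g}^{\pm 1}]$ by the Koszul complex on the regular sequence $t_{1}-1,\dots,t_{g}-1$ (which cuts out the identity element); since each $t_{i}-1$ acts as zero on the trivial module, applying $Hom_{\C\Sigma}(-,\C)$ leaves all differentials zero and yields $Ext^{n}_{\C\Sigma}(\C,\C)\cong\bigwedge^{n}\C^{g}$ of dimension $\binom{g}{n}$, in particular $1$ and $g$. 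For (d), with $\Sigma=F_{g}$, I would use the classical length-one free resolution $0\to(\C\Sigma)^{g}\to\C\Sigma\to\C\to 0$ in which the first map sends the $i$-th basis vector to $B_{i}-1$ (the augmentation ideal of a free group being free on the $B_{i}-1$); applying $Hom_{\C\Sigma}(-,M)$ shows immediately that $Ext^{n}_{\C\Sigma}(\C,M)=0$ for $n\geq 2$, while for $M=\C$ the resulting complex $\C\to\C^{g}$ has zero differential, giving ranks $1$ and $g$. These last two computations are routine once the resolutions are in place.
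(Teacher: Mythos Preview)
Your argument is correct. The paper itself gives no proof here beyond citations (Gunning and Birkenhake--Lange for (a) and (b), Cartan--Eilenberg Ch.~X \S\S4--5 for (c) and (d)), so there is little to compare against; your approach supplies precisely the content behind those references. The identifications $Ext^{n}_{{\mathcal O}_{X}}({\mathcal O}_{X},F)\cong H^{n}(X,F)$ and $Ext^{n}_{\C\Sigma}(\C,M)\cong H^{n}(\Sigma,M)$, followed by the Dolbeault, Koszul, and free-group augmentation-ideal resolutions, are the standard route and are exactly what Cartan--Eilenberg does for (c) and (d). The one point where you genuinely add something beyond unpacking a citation is the vanishing of $H^{2}(X,F)$ for \emph{arbitrary} ${\mathcal O}_{X}$-modules $F$ in (b): you correctly note that topological dimension alone only kills $H^{\geq 3}$, and your use of Malgrange's flatness of $\mathcal{E}_{X}$ over ${\mathcal O}_{X}$ to tensor the length-one Dolbeault resolution with $F$ and obtain a soft (hence acyclic) resolution of $F$ is a clean and valid way to handle this---more explicit than the paper's bare pointer to Gunning.
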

\begin{proof}
The assertions in a) and b) are well known (See e.g. \cite{gun} or \cite{bila}).
For c) and d) the claims for $n=0,1$ follow from \cite[X \S 4, (5) and
(6)]{caei}, and the second claim of d) follows from \cite[X \S 5]{caei}.  
\end{proof}

\section{Unipotent bundles and unipotent
representations}\label{section_unipotent_bd_group}

Let $R$ be a commutative ring with unit, let $\Sigma$ be a group, and let
$(X,{\mathcal O}_{X})$ be a ringed space (commutative with unit).
\begin{Def}
 \begin{itemize}
  \item[a)] A $R\Sigma$-module $M$ is called \emph{unipotent of level} $r$
($r \in \N$), if there exists a filtration of $R\Sigma$-modules $0=M_{0}\subset
\ldots \subset M_{r-1} \subset M_{r}=M$, such that $M_{i+1}/M_{i} \simeq R$ for
$i=0,\ldots , r-1$. We denote the collection of unipotent $R\Sigma$-modules of
level $r$ by ${\bf Un}_{R\Sigma}^{r}$ and by ${\bf Un}_{R\Sigma}$ its union over
all $r\geq 0$. These are full subcategories of the category of
$R\Sigma$-modules.
\item[b)] A ${\mathcal O}_{X}$-module $F$ on $X$ is called \emph{unipotent of
level} $r$ ($r \in \N$), if there exists a filtration of sub ${\mathcal
O}_{X}$-modules $0=F_{0}\subset \ldots \subset F_{r-1} \subset F_{r}=F$, such
that $F_{i+1}/F_{i} \simeq {\mathcal O}_{X}$ for $i=0,\ldots , r-1$. We denote
collection of unipotent ${\mathcal O}_{X}$-modules of level $r$ by ${\bf
Un}_{{\mathcal O}_{X}}^{r}$ and ${\bf Un}_{{\mathcal O}_{X}}$ its union over all
$r\geq 0$. These are full subcategories of the category of ${\mathcal
O}_{X}$-modules.
 \end{itemize}
\end{Def}

\begin{rem}\label{rem:description_unipotent}
If $M$ is a unipotent $R\Sigma$-module of level $r$, then $M$ is also a free
$R$-module of rank $r$. If $F$ is a unipotent ${\mathcal O}_{X}$-module of level
$r$ on $X$, then $F$ is a locally free ${\mathcal O}_{X}$-module of rank $r$
(also called vector bundle). Both properties follow by induction using \cite[4 I
Ch 0 (5.4.9)]{ega}. 
\end{rem}

\begin{rem}\label{rem_Lekause_unipotent} If $M \in {\bf Un}_{R\Sigma}^{r}$, then
after choosing a basis of $M$, we may interpret $M$ as a representation
$\rho:\Sigma \rightarrow GL_{r}(R)$. If $R=\C$ is the field of complex numbers,
then by Kolchins's Theorem \cite[Part I, Chapter V]{ser} one may choose the
basis of $M$ in such a way that for all $\sigma \in \Sigma$, the matrix
$\rho(\sigma)$ is upper triangular with ones on the diagonal.
In this case one can show \cite[proof of Proposition 11.4]{lek01} that there is
a unipotent $\C \Sigma$-module $M_{r-1}$ of level $r-1$ and an exact sequence
\begin{equation}\label{eq_alternative_desc_unipotentrep}
 0\rightarrow \C \rightarrow M \rightarrow M_{r-1} \rightarrow 0.
\end{equation}
This alternative description of unipotent representations will be used later.
\end{rem}

In \cite{sim}, an important correspondence between flat bundles on compact
K\"ahler manifolds 
and certain classes of Higgs bundles is obtained.
When considering unipotent vector bundles, this result has the following
consequence.

\begin{prop}\label{rem_Unipotent_bundles_are_flat}(Simpson,\cite{sim})
Let $Y$ be a complex compact K\"ahler manifold. Then every
unipotent vector bundle $U$ on $Y$ admits a flat holomorphic connection.
\end{prop}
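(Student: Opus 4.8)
The plan is to deduce the statement directly from Simpson's equivalence of categories quoted in the introduction, \cite[Lemma 3.5]{sim}, which identifies the category of flat bundles on $Y$ with the category of Higgs bundles on $Y$ that arise as iterated extensions of stable bundles of degree zero with vanishing first and second Chern classes. The strategy is to exhibit $U$, equipped with the zero Higgs field, as an object of the latter category, and then to transport it back across the equivalence to obtain a flat holomorphic connection compatible with the given holomorphic structure.

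First I would record that the structure sheaf $\mathcal{O}_Y$ is a line bundle, hence trivially stable, of degree zero and with vanishing Chern classes; with the zero Higgs field, $(\mathcal{O}_Y,0)$ is therefore a stable Higgs bundle lying in Simpson's category. Next, by the definition of a unipotent $\mathcal{O}_Y$-module of level $r$, there is a filtration $0 = U_0 \subset U_1 \subset \cdots \subset U_r = U$ with each quotient $U_{i+1}/U_i \cong \mathcal{O}_Y$. Endowing $U$ with the zero Higgs field turns this into a filtration by Higgs subsheaves whose successive quotients are $(\mathcal{O}_Y,0)$, so $(U,0)$ is an iterated extension of copies of $(\mathcal{O}_Y,0)$ and consequently belongs to Simpson's category. (In particular its Chern character is $r\,\mathrm{ch}(\mathcal{O}_Y)=r$, confirming the vanishing of the first and second Chern classes.) Applying the equivalence, $(U,0)$ corresponds to a flat bundle $(\mathcal{V},\nabla)$ on $Y$, i.e.\ a holomorphic bundle carrying an integrable holomorphic connection.

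It then remains to identify the holomorphic bundle underlying $(\mathcal{V},\nabla)$ with $U$ itself, which is the delicate point, since in general the non-abelian Hodge correspondence alters the holomorphic structure on the common underlying $C^\infty$ bundle. Here, however, the Higgs field is zero and every graded piece is the trivial bundle $\mathcal{O}_Y$, which corresponds under the equivalence to the trivial flat line bundle with its flat metric; the harmonic metric assembled from these flat pieces leaves the two $\bar\partial$-operators equal, so the holomorphic type is preserved through the successive extensions. I would organize this as an induction on the level $r$: the base case $r=1$ is the trivial bundle with its standard connection, and at level two the inductive step amounts to matching the extension class of $U$ in $H^1(Y,\mathcal{O}_Y)$ with that of the flat bundle via the surjective Hodge projection $H^1(Y,\C) \to H^1(Y,\mathcal{O}_Y)$, with the higher levels handled analogously.

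The main obstacle is precisely this last matching of holomorphic structures: one must verify that Simpson's functor, restricted to the unipotent part, returns the \emph{given} holomorphic bundle $U$ rather than merely a $C^\infty$-isomorphic bundle carrying a twisted holomorphic structure. This is exactly the content that \cite[Lemma 3.5]{sim} supplies for iterated extensions of stable degree-zero bundles with vanishing Chern classes, so I would lean on that statement rather than reprove the underlying harmonic-metric estimates.
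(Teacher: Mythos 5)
Your overall strategy---reducing the statement to Simpson's non-abelian Hodge theory by viewing $(U,0)$ as an iterated extension of the stable Higgs bundle $(\mathcal{O}_Y,0)$ with vanishing Chern classes---is the same as the paper's, and that first part of your argument is fine. The problem lies in the step you yourself flag as delicate, and neither of your two attempts to close it works. First, the claim that ``the harmonic metric assembled from these flat pieces leaves the two $\bar\partial$-operators equal'' is not meaningful: a non-split unipotent bundle is not polystable, hence admits no harmonic (pluriharmonic) metric at all; Simpson's equivalence on non-semisimple objects is built homologically out of the semisimple case, not from a metric on the object itself, so there is nothing to ``assemble''. Second, your induction is only easy at level two, where the matching of extension classes reduces to surjectivity of the Hodge projection $H^1(Y,\C)\to H^1(Y,\mathcal{O}_Y)$ (at that level you in fact need no Simpson at all: lift the extension class of $U$ to $c\in H^1(Y,\C)=\hom(\pi_1(Y),\C)$ and take the flat bundle of the associated rank-two unipotent representation). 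At level $r$ the ``analogous'' step requires surjectivity of the comparison map from $Ext^1$ in the flat category over a \emph{nontrivial} unipotent flat bundle $W$ to $Ext^1_{\mathcal{O}_Y}(\mathcal{O}_Y,W_{hol})=H^1(Y,W_{hol})$, i.e.\ surjectivity of $H^1_{dR}(Y,W)\to H^1(Y,W_{hol})$ with flat coefficients; this does not follow from the trivial-coefficient Hodge decomposition and is nowhere established in your sketch.

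Your final fallback---that this matching ``is exactly the content that Lemma 3.5 supplies''---is a misattribution: the bare equivalence of categories says nothing about the functor returning the \emph{given} holomorphic bundle when the Higgs field is zero; a priori it returns some flat bundle whose underlying holomorphic structure could differ from $U$. The statement that actually closes the gap is contained in the \emph{remarks following} Lemma 3.5 in Simpson's paper (Section 3, Examples, p.~36/37), where it is asserted that a holomorphic bundle which is a successive extension of stable bundles with vanishing rational characteristic classes admits a flat connection compatible with its holomorphic structure. That remark is precisely what the paper cites, and with it the proof consists only of the two observations you made at the start (the filtration with quotients $\mathcal{O}_Y$, which is stable of degree zero, and the inductive vanishing of the characteristic classes); none of the harmonic-metric or inductive machinery is then needed.
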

\begin{proof} 
Since $U$ is a successive extension of the trivial bundle ${\mathcal O}_{Y}$, it
follows by induction that all the rational characteristic classes of $U$ of
positive
degree 
vanish. Moreover, $U$ is also a successive extension of the stable vector
bundle ${\mathcal O}_{Y}$. Therefore, it follows from \cite[Section 3 -
Examples, remarks following Lemma 3.5, p. 36/37]{sim} 
that $U$ admits a flat holomorphic connection.
\end{proof}

\begin{rem}
Note also that any unipotent vector bundle over a compact K\"ahler manifold is
always semi-stable.
\end{rem}

\section{The case of complex tori}\label{section:CaseOfComplexTori}

As in the introduction let $X=V/\Lambda$ be a complex torus of dimension $g$, 
where $\Lambda$ is canonically identified with $\pi_{1}(X)$.

\begin{lem}\label{lem:unipotentBundlesareFlat}
Let $U$ be a unipotent ${\mathcal O}_{X}$-module of level $r$ on $X$. Then there
exists a unipotent $\C \Lambda$-module $\rho$ of level $r$, such that $E_{\rho}$
is isomorphic to $U$.
\end{lem}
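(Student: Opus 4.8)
The plan is to establish the lemma by induction on the level $r$, using the fact that unipotent bundles are flat (Proposition \ref{rem_Unipotent_bundles_are_flat}) together with the comparison of $\mathrm{Ext}$ groups from Proposition \ref{prop_explicit_computation_some_Ext} and the compatibility of $S$ with Yoneda $\mathrm{Ext}$ from Proposition \ref{prop_compatibility_Yoneda_Derived_ext}. For the base case $r=1$, a unipotent bundle of level $1$ is just ${\mathcal O}_X$ itself, which is $E_\rho$ for the trivial rank-one representation $\rho=\C$, so there is nothing to prove. For the inductive step, I would take a unipotent bundle $U$ of level $r$ and use the defining filtration to produce a short exact sequence
\[
0 \rightarrow {\mathcal O}_X \rightarrow U \rightarrow U' \rightarrow 0,
\]
where $U'$ is unipotent of level $r-1$. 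By the inductive hypothesis there is a unipotent $\C\Lambda$-module $\rho'$ of level $r-1$ with $E_{\rho'} \cong U'$; equivalently $S(\rho') \cong U'$ (identifying $S$ with the assignment $\rho \mapsto E_\rho$ on unipotent modules).

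The heart of the argument is to lift the extension class of $U$ back to the representation side. The extension above determines a class in $YExt^1_{{\mathcal O}_X}(U', {\mathcal O}_X) \cong YExt^1_{{\mathcal O}_X}(S(\rho'), S(\C))$. The goal is to find a class in $YExt^1_{\C\Lambda}(\rho', \C)$ mapping to it under $S_*$; any extension $0 \to \C \to \rho \to \rho' \to 0$ of $\C\Lambda$-modules realizing such a preimage class would be a unipotent module of level $r$ with $S(\rho) \cong U$, by Proposition \ref{prop_compatibility_Yoneda_Derived_ext}. So I would need surjectivity (ideally bijectivity) of
\[
S_*: YExt^1_{\C\Lambda}(\rho', \C) \longrightarrow YExt^1_{{\mathcal O}_X}(S(\rho'), S(\C)).
\]
This is where the main obstacle lies: it is not immediate that $S_*$ is surjective on these $\mathrm{Ext}$ groups for general unipotent $\rho'$, only that it is an isomorphism in the simplest case $\rho' = \C$ (where Proposition \ref{prop_explicit_computation_some_Ext} parts a) and c) both give dimension $g$, and one checks $S_*$ is an isomorphism there).

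To handle general $\rho'$, I would use the naturality diagram of Proposition \ref{prop_naturality_of_S_and_cohomology} together with a further induction or a dimension count. Applying the long exact sequence in $\mathrm{Ext}^*_{\C\Lambda}(-,\C)$ and $\mathrm{Ext}^*_{{\mathcal O}_X}(-, {\mathcal O}_X)$ to the filtration steps of $\rho'$, and using the five-lemma-type comparison that the naturality diagram provides, one reduces the surjectivity of $S_*$ on $\mathrm{Ext}^1$ to the already-known isomorphism in the rank-one case, provided one controls the relevant $\mathrm{Ext}^2$ terms so the snake-lemma connecting maps behave. Here the special structure of the torus enters: since $X = V/\Lambda$ has $\pi_1(X) \cong \Lambda \cong \Sigma$ free abelian of rank $g$, the computations of $\mathrm{Ext}$ match on both sides and the cohomological dimensions are compatible. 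I expect the technically delicate point to be verifying that $S_*$ is an isomorphism (not merely injective) on $\mathrm{Ext}^1$ for the intermediate modules, so that every geometric extension of $U'$ by ${\mathcal O}_X$ genuinely arises from a representation-theoretic extension — this essential surjectivity at the level of $\mathrm{Ext}$ is precisely what forces $S$ to be essentially surjective onto unipotent bundles, and it is the crux on which the whole equivalence of categories in Theorem \ref{thm_main_for_tori} rests.
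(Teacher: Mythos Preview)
Your proposal has a genuine gap rooted in a confusion between $\Lambda$ and $\Sigma$. The lemma concerns $\C\Lambda$-modules, where $\Lambda=\pi_1(X)$ is free abelian of rank $2g$, not rank $g$. Consequently your key claim that $S_*:Ext^1_{\C\Lambda}(\C,\C)\to Ext^1_{{\mathcal O}_X}({\mathcal O}_X,{\mathcal O}_X)$ is an isomorphism is false: the source has dimension $2g$ while the target has dimension $g$ (Proposition~\ref{prop_explicit_computation_some_Ext} part c) applies to a rank-$g$ group, which $\Lambda$ is not). So your base case for the $Ext^1$ comparison fails as an isomorphism. Even if you retreat to surjectivity only, the inductive step via a four-lemma argument would require injectivity of $S_*$ on $Ext^2_{\C\Lambda}(\C,\C)\to Ext^2_{{\mathcal O}_X}({\mathcal O}_X,{\mathcal O}_X)$, and this also fails by a dimension count: the source has dimension $\binom{2g}{2}$ while the target has dimension $\binom{g}{2}$. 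Thus the homological reduction you outline cannot be pushed through.

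The paper avoids all of this by a completely different, structure-theoretic route: it first uses Proposition~\ref{rem_Unipotent_bundles_are_flat} to get \emph{some} flat structure $E_\rho\cong U$, and then argues that one may adjust $\rho$ to be unipotent. This uses Matsushima's classification (every indecomposable flat bundle on a torus is $L\otimes U'$ with $L$ a degree-zero line bundle and $U'$ unipotent) together with a descending induction to show each $L$ must be trivial; on the representation side it uses that an indecomposable module over an abelian group is a character tensor a unipotent, and again the character part is forced to be trivial. No $Ext$-comparison is needed at this stage; the cohomological machinery in the paper is reserved for the quotient group $\Sigma$ of rank $g$, where the dimension match \emph{does} hold (Lemma~\ref{lem_yoneda_ext1_S_iso_CT}).
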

\begin{proof}
Since a complex torus is a compact K\"ahler manifold,
by Proposition \ref{rem_Unipotent_bundles_are_flat} the vector bundle
$U$ over $X$ admits a flat holomorphic connection.
So, there exists a $\C \Lambda$-module $\rho$, such that $E_{\rho} \simeq
U$, and we need to show that one can choose such a module that is unipotent. We
claim that each indecomposable component of $U$ is also unipotent: By
\cite[Th\'eor\`eme 3]{mat}, each indecomposable component of $U$ is isomorphic
to a vector bundle of the form $L_{c} \otimes U_{c}$, where $L_{c}$ is a line
bundle of degree zero, and $U_{c}$ is an indecomposable unipotent vector bundle.
If we have such a (nonzero) component, then we obtain an injection $L_{c}
\hookrightarrow U$, and since $U$ is unipotent, we can form the following
commutative diagram
\begin{diagram}[width=2pt,height=1.5em]
0 & \rTo & U_{r-1} & \rTo & U & \rTo & {\mathcal O}_{X} & \rTo & 0, \\
  &   &   &   & \uTo  & \ruTo &   &   &   \\
  &   &   &   & L_{c} &  &  &  & \\
\end{diagram}
where $U_{r-1}$ is unipotent of level $r-1$, and the sequence is exact. If
$L_{c}$ is nontrivial, then the diagonal map must be zero since $Hom _{{\mathcal
O}_{X}}( L_{c},{\mathcal O}_{X}) $ would vanish in this case. Therefore the
inclusion $L_{c} \hookrightarrow U$ must factor over the kernel $U_{r-1}$, so we
obtain a new inclusion $L_{c} \hookrightarrow U_{r-1}$. By inverse induction we
could find then an injection $L_{c} \hookrightarrow 0$ what is clearly a
contradiction. This shows that each indecomposable component of $U$ is
unipotent, and henceforth we may assume that $U$ is indecomposable. In this case
the $\C\Lambda$ module $\rho$ is indecomposable as well (use that $S$ is
compatible with direct sums). Since $\Lambda$ is abelian, such an indecomposable
$\C\Lambda$-module must be isomorphic to the tensor product $\alpha \otimes
\beta$, where $\alpha$ is a $\C\Lambda$-module of $\C$-dimension one, and
$\beta$ is a unipotent $\C\Lambda$-module. (This can be seen analogously as in
\cite[Corollary 2.7]{luda}) By the same reasoning as above we see that the line
bundle $E_{\alpha}$ must be    trivial since $U \simeq E_{\alpha} \otimes
E_{\beta}$ is indecomposable and unipotent. Therefore we may replace $\rho$ by
$\alpha^{-1}\otimes \rho = \beta $, and so we have found a $\rho$ that is
unipotent and satisfies $E_{\rho} \simeq U$, this shows our claim. 
\end{proof}

Consider now the morphism $\alpha : \pi_{1}(X)=\Lambda \longrightarrow \Sigma$ 
defined just before Theorem \ref{thm_main_for_tori}.
Using \cite[Lemme 5.1]{mor} we can show the following proposition:

\begin{prop}
\label{pro:unipotent-Schottky} The Schottky functor $S$ is essentially
surjective for unipotent objects, i.e., for each unipotent ${\mathcal
O}_{X}$-module $U$ on $X$ of level $r$, there exists a unipotent $\C
\Sigma$-module $\rho$ of level $r$, such that $E_{\rho}=S(\rho)$ is isomorphic
to $U$.
\end{prop}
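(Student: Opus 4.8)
The plan is to upgrade the flat $\C\Lambda$-module furnished by Lemma~\ref{lem:unipotentBundlesareFlat} to one that factors through $\alpha\colon\Lambda\to\Sigma$, while keeping the holomorphic isomorphism class of the associated bundle fixed. By that lemma there is a unipotent $\C\Lambda$-module $\rho$ of level $r$ with $E_{\rho}\simeq U$; by Kolchin's theorem (Remark~\ref{rem_Lekause_unipotent}) I may assume that every $\rho(\lambda)$ is upper triangular with $1$'s on the diagonal. Since $\Lambda$ is abelian the matrices $\rho(\lambda)$ commute, so the logarithms $N(\lambda):=\log\rho(\lambda)$ are commuting nilpotent matrices spanning an abelian Lie algebra $\mathfrak{a}$ of strictly upper triangular matrices, and $N\colon\Lambda\to\mathfrak{a}$ is an additive homomorphism. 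I would then extend $N$ $\R$-linearly to a real-linear map $N\colon V\to\mathfrak{a}$, using $\Lambda\otimes_{\Z}\R= V$.

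Next I would exploit the freedom to modify $\rho$ by a holomorphic gauge transformation. For any $\C$-linear map $\tilde N\colon V\to\mathfrak{a}$ the assignment $g(v):=\exp(-\tilde N(v))$ is a holomorphic map $V\to GL_{r}(\C)$, and a direct computation inside the commutative group $\exp(\mathfrak{a})$ gives $g(v+\lambda)\,\rho(\lambda)\,g(v)^{-1}=\exp\big(N(\lambda)-\tilde N(\lambda)\big)$, which is independent of $v$. Hence $\rho'(\lambda):=\exp\big((N-\tilde N)(\lambda)\big)$ is again a unipotent representation of $\Lambda$ of level $r$, and $g$ descends to a holomorphic isomorphism $E_{\rho'}\simeq E_{\rho}$. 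This is precisely the content I would extract from \cite[Lemme 5.1]{mor}, which characterises when two flat modules over a complex torus give isomorphic holomorphic bundles in terms of their holomorphic (that is, $\C$-linear) parts.

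The role of the normalization $\Pi=(Z,I)$ is that the generators $\lambda_{g+1},\dots,\lambda_{2g}$ which $\alpha$ sends to the identity are exactly the standard $\C$-basis $e_{1},\dots,e_{g}$ of $V$ (the $I$-block), so $\lambda_{g+i}=e_{i}$. I am therefore free to specify the $\C$-linear map $\tilde N$ by prescribing its values on this basis, and I set $\tilde N(e_{i}):=N(\lambda_{g+i})$ for $i=1,\dots,g$. With this choice $\rho'(\lambda_{g+i})=\exp\big(N(\lambda_{g+i})-\tilde N(e_{i})\big)=\mathrm{Id}$, so $\rho'$ is trivial on $\lambda_{g+1},\dots,\lambda_{2g}$. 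Since $\Sigma\cong\Lambda/\langle\lambda_{g+1},\dots,\lambda_{2g}\rangle$ and $\alpha$ is the corresponding quotient map, $\rho'$ factors as $\rho'=\tilde\alpha^{*}(\bar\rho)$ for a unique $\C\Sigma$-module $\bar\rho$, which is unipotent of level $r$. Then $S(\bar\rho)=i^{*}\circ RH(\rho')=E_{\rho'}\simeq E_{\rho}\simeq U$, as required.

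I expect the main obstacle to be the second step: making rigorous that subtracting the holomorphic part $\tilde N$ leaves the holomorphic isomorphism class unchanged, and that the resulting $\rho'$ stays unipotent of the same level. The commutativity of $\Lambda$, and hence of the $\rho(\lambda)$, is what makes the logarithm additive and collapses the gauge computation to addition in $\mathfrak{a}$; without it the exponential manipulations would not close up. Invoking \cite[Lemme 5.1]{mor} for the isomorphism $E_{\rho'}\simeq E_{\rho}$ is the single external input, and the elementary geometric observation that the $\lambda_{g+i}$ form a $\C$-basis of $V$ — so that a $\C$-linear $\tilde N$ can be prescribed freely on them — is what ties the construction to the chosen homomorphism $\alpha$.
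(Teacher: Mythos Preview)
Your proposal is correct and follows essentially the same route as the paper. The paper's holomorphic gauge $f(z)=\exp(\int_{0}^{z}\omega)$ with $\omega=\sum_{j}A_{j}\,dz_{j}$ and $A_{j}=-\log\rho(\lambda_{g+j})$ is exactly your $g(v)=\exp(-\tilde N(v))$ once one observes $\int_{0}^{z}\omega=\sum_{j}A_{j}z_{j}$, and both arguments hinge on the commutativity of the image of $\rho$ and on the identification $\lambda_{g+i}=e_{i}$ coming from the normalization $\Pi=(Z,I)$; the paper likewise concludes via \cite[Lemme 5.1]{mor}.
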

\begin{proof}
It follows from Lemma \ref{lem:unipotentBundlesareFlat}, that there exists a
unipotent representation $\rho:\Lambda\to  GL(W)$, such that $E_{\rho}$ is
isomorphic to $U$, and we need to find such a representation that factors over
$\Sigma$.
Denote by $Un(W)$ the unipotent subgroup of $GL(W)$. Let $H$ be
the smallest commutative Lie group containing $\rho(\Lambda)$ inside
$Un(W)$, so that:\[
\rho(\Lambda)\subset H\subset Un(W).\]
Note that any holomorphic function $f:V\to H$, due to the commutativity
of $H$, verifies\[
f(\lambda + z) \cdot \rho(\lambda) \cdot f(z)^{-1}=f(\lambda + z) \cdot
f(z)^{-1} \cdot\rho(\lambda).\]
Also because $H$ is commutative, there is a well defined notion of
logarithm: $\log:H\to\mathfrak{h}$ where $\mathfrak{h}$ is the Lie
algebra of $H$. Let $A_{j}:=-\log(\rho(\lambda_{g+j}))\in\mathfrak{h}$,
$j=1,...,g$, for the generators $\lambda_{g+j} \in I \cdot \Z^{g} \subset
\Lambda$, and consider
the $\mathfrak{h}$-valued 1-form $\omega=A_{1}dz_{1}+\cdots+A_{g}dz_{g}$
where $z_{1},...,z_{g}$ are coordinates on $V$ dual to the basis
$e_{1},...,e_{g}$. 
Let $f(z):=\exp(\int_{0}^{z}\omega)$. One easily checks that this
is well defined and belongs to $GL(W)$. Then $f(\lambda + z) \cdot
f(z)^{-1}=\exp(\int_{z}^{\lambda+z}\omega)$
and this expression is independent of $z$ as $\omega$ has constant coefficients.
So, for $\lambda\in I \cdot \Z^{g}$ we can uniquely write
$\lambda=c_{1}e_{1}+\cdots+c_{g}e_{g}$
with $c_{j}\in\mathbb{Z}$ and we have\begin{alignat*}{1}
f(\lambda + z)\cdot f(z)^{-1} & =\exp(\int_{0}^{\lambda}\omega)\\
 & =\exp(\int_{0}^{\lambda}A_{1}dz_{1}+\cdots+A_{g}dz_{g})\\
 & =\exp(c_{1}A_{1}+\cdots+c_{g}A_{g})\\
 & =\exp(A_{1})^{c_{1}}\cdots\exp(A_{g})^{c_{g}}\\
 & =\rho(e_{1})^{-c_{1}}\cdots\rho(e_{g})^{-c_{g}}\\
 & =\rho(c_{1}e_{1}+\cdots+c_{g}e_{g})^{-1}\\
 & =\rho(\lambda)^{-1},\end{alignat*}
where we used $\int_{e_{j}}dz_{k}=\delta_{jk}$. This means that, defining \[
\sigma(\lambda)=f(\lambda + z) \cdot f(z)^{-1} \cdot\rho(\lambda),\]
we have shown that $\sigma$ is a Schottky representation w.r.t. $\alpha$,
because $\sigma(\lambda)=1$ for all $\lambda\in I \cdot \Z^{g}$, i.e., $\sigma$
factors over $\Sigma$. Moreover, 
$\rho$ and $\sigma$ define isomorphic vector bundles, by \cite[Lemme 5.1]{mor}.
\end{proof}

\begin{lem}\label{lem_yoneda_ext1_S_iso_CT}
The map
$$
S_{*} :YExt^{1}_{\C \Sigma}(\C,\C) \longrightarrow YExt^{1}_{{\mathcal
O}_{X}}({\mathcal O}_{X},{\mathcal O}_{X})
$$
is an isomorphism of $g$-dimensional $\C$-vector spaces.
\end{lem}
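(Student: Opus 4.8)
The plan is to prove this by combining the naturality and comparison results already established with the explicit dimension computations. Concretely, by Proposition \ref{prop_compatibility_Yoneda_Derived_ext} the Yoneda-level map $S_{*}:YExt^{1}_{\C\Sigma}(\C,\C)\to YExt^{1}_{{\mathcal O}_{X}}({\mathcal O}_{X},{\mathcal O}_{X})$ fits into a commutative square with the derived-functor map $S_{*}:Ext^{1}_{\C\Sigma}(\C,\C)\to Ext^{1}_{{\mathcal O}_{X}}({\mathcal O}_{X},{\mathcal O}_{X})$, the horizontal arrows being the comparison isomorphisms $C_{\C\Sigma}$ and $C_{{\mathcal O}_{X}}$. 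Hence it suffices to show that the derived-functor $S_{*}$ is an isomorphism. From Proposition \ref{prop_explicit_computation_some_Ext} parts a) and c), both source and target are $\C$-vector spaces of dimension $g$, so an isomorphism is equivalent to injectivity (or equivalently surjectivity). This reduces the whole statement to checking that $S_{*}$ is injective in degree one; I would note at the outset that $S(\C)={\mathcal O}_{X}$, which is what makes the two sides comparable.

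First I would make the identification $S(\C)\cong{\mathcal O}_{X}$ explicit: the trivial $\C\Sigma$-module $\C$ pulls back to the trivial $\C\Lambda$-module, whose associated locally constant sheaf is the constant sheaf $\C_{X}$, and tensoring with ${\mathcal O}_{X}$ over $\C_{X}$ yields ${\mathcal O}_{X}$ itself. With this in hand the target $YExt^{1}_{{\mathcal O}_{X}}({\mathcal O}_{X},{\mathcal O}_{X})$ is literally the group of extensions classifying unipotent bundles of level two, and the source classifies unipotent $\C\Sigma$-modules of level two. I would then argue injectivity directly on the Yoneda side, where it is most transparent: a class in the kernel is an extension $0\to\C\to E\to\C\to 0$ of $\C\Sigma$-modules such that $S(E)=E_{\rho}$ splits as ${\mathcal O}_{X}\oplus{\mathcal O}_{X}$. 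Because $S$ is faithful and exact (Proposition \ref{prop_schottky_is_exact}), a splitting of $S(E)$ would have to be reflected back; the key point is that a bundle splitting gives a global flat section, and pulling this back through the equivalence of categories $RH$ shows the original extension of $\C\Sigma$-modules splits, so the class is zero.

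The main obstacle, and the step I would treat most carefully, is the last implication: deducing that the extension of $\C\Sigma$-modules splits from the fact that $S(E)$ splits as an ${\mathcal O}_{X}$-module. The subtlety is that $i^{*}$ (tensoring $\C_{X}\to{\mathcal O}_{X}$) need not be faithful on morphisms in a way that immediately reflects splittings, since a splitting of $S(E)={E\otimes_{\C_{X}}{\mathcal O}_{X}}$ is an ${\mathcal O}_{X}$-linear map, not a priori induced by a $\C_{X}$-linear one. I would handle this by instead arguing at the level of dimension counts rather than reflecting the splitting directly: since both $Ext^{1}$ groups have dimension exactly $g$ and $S_{*}$ is a linear map between them, it is enough to exhibit $g$ classes in the source whose images are linearly independent, or equivalently to show $S_{*}$ has trivial kernel using the commutative diagram together with the naturality of Proposition \ref{prop_naturality_of_S_and_cohomology}. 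A clean route is to choose an explicit basis of $Ext^{1}_{\C\Sigma}(\C,\C)\cong\C^{g}$ coming from the $g$ generators of $\Sigma$ (via \cite[X \S 4]{caei}), track these classes through $S_{*}$, and identify their images with the $g$ basis classes of $Ext^{1}_{{\mathcal O}_{X}}({\mathcal O}_{X},{\mathcal O}_{X})\cong H^{1}(X,{\mathcal O}_{X})\cong\C^{g}$ under the de Rham or Dolbeault description; the generator-to-generator correspondence built into the definition of $\alpha$ before Theorem \ref{thm_main_for_tori} should make this matching, and hence the isomorphism, transparent.
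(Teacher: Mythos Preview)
Your overall framework matches the paper's: both sides have dimension $g$ by Proposition \ref{prop_explicit_computation_some_Ext} and the comparison isomorphisms of Proposition \ref{prop_compatibility_Yoneda_Derived_ext}, so everything reduces to injectivity of $S_{*}$. You also correctly locate the crux: a splitting of $S(E)$ as an ${\mathcal O}_{X}$-module need not a priori come from a splitting of $E$ as a $\C\Sigma$-module, because $i^{*}$ is not full.

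The gap is that neither of your proposed resolutions is actually carried out. Your first attempt (reflecting the splitting via faithfulness of $S$) is, as you yourself note, insufficient: faithfulness gives injectivity on Hom-sets, not surjectivity, so it cannot produce a splitting map. Your second route (explicit basis matching via Dolbeault) is correct in principle, but the phrase ``should make this matching transparent'' is exactly where the content lies. One must show that the $g$ homomorphisms $\rho'_{i}:\Lambda\to\C$ dual to $B_{1},\dots,B_{g}$ have linearly independent images in $H^{1}(X,{\mathcal O}_{X})\cong H^{0,1}(X)$, i.e.\ that no nontrivial linear combination lies in the kernel $H^{1,0}(X)=\Hom_{\C}(V,\C)$. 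This is true---a Schottky $\rho'$ vanishes on $\lambda_{g+1},\dots,\lambda_{2g}$, which under the normalization $\Pi=(Z,I)$ are the $\C$-basis vectors $e_{1},\dots,e_{g}$, so $\rho'$ is $\C$-linear only if it is zero---but it is a genuine analytic input, not a formality.

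The paper handles this same step by staying on the Yoneda side and invoking \cite[Lemme 6.3]{mat}: for a rank-two unipotent $\rho$ with off-diagonal part $\rho':\Lambda\to\C$, the bundle $E_{\rho}$ is decomposable if and only if $\rho'\in\Hom_{\C}(V,\C)$. Combined with the Schottky condition this forces $\rho'=0$, hence the extension splits. So the paper's argument and your second proposal are essentially dual---both rest on identifying the kernel of $\Hom(\Lambda,\C)\to H^{1}(X,{\mathcal O}_{X})$ with $\Hom_{\C}(V,\C)$---but the paper outsources this to Matsushima's lemma while you leave it unproved.
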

\begin{proof}
Both spaces are $\C$-vector spaces of dimension $g$ by Proposition
\ref{prop_explicit_computation_some_Ext} a) and c), and because of the
comparison
isomorphisms $C_{\C \Sigma}$ and $C_{{\mathcal O}_{X}}$ from Proposition
\ref{prop_compatibility_Yoneda_Derived_ext}. So, it suffices to show
that $S_{*}$ is
injective. Let $Y:=(\C \hookrightarrow \rho \twoheadrightarrow \C)$ represent an
extension class $[Y]$ in $YExt^{1}_{\C \Sigma}(\C,\C)$, and assume that
$S_{*}([Y])$ is
trivial. Then $S_{*}(Y)$ splits and so $S(\rho)$ is isomorphic to the trivial
rank two ${\mathcal O}_{X}$-module.
After choosing a basis, we may assume that
$\rho$ is a
representation into $GL_{2}(\C)$ and given in matrix form by
\begin{equation}\label{eq_explicit_rk2_unipotent_torus}
\rho(\lambda) =
\left( \begin{array}{c  c }
1 & \rho '(\lambda)  \\
0 & 1 \\
\end{array} \right),
\end{equation}
for a group homomorphism $\rho ': \Lambda \longrightarrow \C$. It follows then from \cite[Lemme 6.3]{mat} that $\rho' \in
Hom_{\C}(V,\C)=Hom_{\Z}(I \cdot \Z^{g},\C)$ because $S(\rho)$ is decomposable.
But by definition of $\alpha$, we must have $\rho'\in Hom_{\C}(Z \cdot
\Z^{g},\C)$ so $\rho'$ and $\rho$ must be trivial. Finally, since $\rho$ is
trivial, $Y$ may be considered as an exact sequence of vector spaces, and such a
sequences splits, so $Y$ must represent the trivial extension class.
\end{proof}

We are now able to prove \textbf{Theorem \ref{thm_main_for_tori}}:

\begin{proof}By Proposition \ref{pro:unipotent-Schottky} we know that all
unipotent vector bundles lie in the essential image of the functor $S$.
Moreover,
since $\C$ is a field, we know by Proposition \ref{prop_schottky_is_exact} c)
that $S$ is also faithful. So, it remains to show that $S$ is full as well. If
$M_{1},M_{2}$ are two unipotent $\C\Sigma$-modules, then $S_{*}$ defines an
injective morphism of $\C$-vector spaces $Hom_{\C \Sigma}(M_{1},M_{2})
\hookrightarrow Hom_{{\mathcal O}_{X}}(S(M_{1}),S(M_{2}))$, since $S$ is
faithful. So, it suffices to show that both have the same dimension, or
equivalently that $Hom_{\C \Sigma}(\C,M_{1}^{*}\otimes M_{2})$ and
$Hom_{{\mathcal O}_{X}}({\mathcal O}_{X},S(M_{1})^{*}\otimes S(M_{2}))$ have the
same dimension ($^{*}$ denotes dual). It is also easy to see that
$M_{1}^{*}\otimes M_{2}$ is unipotent. So, our claim follows if we can show that
if $U_{r}$ is a unipotent $\C\Sigma$-module, then $S_{*}:Hom_{\C
\Sigma}(\C,U_{r}) \rightarrow Hom_{{\mathcal O}_{X}}(S(\C),S(U_{r}))$ is an
isomorphism. We will show this by induction on the level. For $r=0,1$ this is
trivial, and so let us assume that this is known for all unipotent bundles of
level less than $r$. By Section \ref{section_unipotent_bd_group}, Equation
\ref{eq_alternative_desc_unipotentrep}, there exists a unipotent
$\C\Sigma$-module or level $r-1$ and an exact sequence
\begin{equation}
 0\rightarrow \C \rightarrow U_{r} \rightarrow U_{r-1} \rightarrow 0.
\end{equation}
Moreover by Proposition \ref{prop_naturality_of_S_and_cohomology} we have the
following
commutative diagram in which the horizontal sequences are exact:
\begin{diagram}[width=2pt,height=2em]
0 & \rTo & Hom_{\C\Sigma}(\C,\C) & \rTo  & Hom_{\C\Sigma}(\C,U_{r}) & \rTo  &
Hom_{\C\Sigma}(\C,U_{r-1}) & \rTo^{\delta} & Ext_{\C\Sigma}^{1}(\C,\C) 
\\
 \dTo &  & \dTo^{S_{*}} &  & \dTo^{S_{*}} &  & \dTo^{S_{*}} & & \dTo^{S_{*}}   
\\
0 & \rTo & Hom_{{\mathcal O}_{X}}(S(\C),S(\C)) & \rTo& Hom_{{\mathcal
O}_{X}}(S(\C),S(U_{r})) & \rTo & Hom_{{\mathcal O}_{X}}(S(\C),S(U_{r-1})) &
\rTo^{\delta } & Ext_{{\mathcal O}_{X}}^{1}(S(\C),S(\C))  \\
\end{diagram}
The second
and the fourth vertical arrows are isomorphisms by the induction hypothesis, and
it follows from Lemma \ref{lem_yoneda_ext1_S_iso_CT} and Proposition
\ref{prop_compatibility_Yoneda_Derived_ext} that the fifth vertical
arrow is an isomorphism. It follows then by the five lemma that the middle arrow
is an isomorphism as well, and our claim follows by induction. This shows that
$S$ is full as well, and so by abstract category it follows that $S$ induces an
equivalence.
\end{proof}

\begin{rem}
\begin{itemize}
\item[a)] If $X$ is an elliptic curve, then it was shown that in \cite[Lemma
5]{flo} that all unipotent bundles lie in the essential image of the Schottky
functor $S$. In the rank two case, an analogous result was already shown in
\cite{mat} Section 6. 
\item[b)] In the $p$-adic case, an analogous result was shown in \cite[Lemma
4.3]{ludb} by a different method.
\end{itemize}
\end{rem}

\section{The case of Riemann Surfaces}\label{section_RiemannSurface}

As in the introduction let $X$ be a compact Riemann Surface of genus $g$, and
consider the morphism $\alpha:\pi_{1}(X) \rightarrow \Sigma$. 

We will need the following lemma:
\begin{lem}\label{lem_unipotent_rk2_S_injective}
Let $\rho$ be a unipotent $\C \pi_{1}(X)$-module of rank $2$, and assume that
$\rho$ is induced by a $\C \Sigma$-module. If $S(\rho)$ is isomorphic to the
trivial rank two vector bundle on $X$, then $\rho$ is trivial.
\end{lem}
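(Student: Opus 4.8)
The plan is to make the representation explicit, realize $S(\rho)=E_{\rho}$ as the flat bundle attached to $\rho$, and then read off what the triviality of $E_{\rho}$ forces at the level of holomorphic sections. First I would invoke Kolchin's theorem (Remark \ref{rem_Lekause_unipotent}) to choose a basis in which
\[
\rho(\gamma)=\begin{pmatrix} 1 & \rho'(\gamma) \\ 0 & 1 \end{pmatrix},
\]
where $\rho'\colon\pi_{1}(X)\to\C$ is a group homomorphism (the off-diagonal entries add, and $\C$ is abelian). The hypothesis that $\rho$ is induced by a $\C\Sigma$-module means it factors through $\alpha\colon\pi_{1}(X)\to\Sigma$; since $\alpha(a_{i})=1$ by the definition of $\alpha$ given before Theorem \ref{thm_main_for_riemann_surfaces}, this forces $\rho'(a_{i})=0$ for all $i=1,\dots,g$. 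Thus the whole statement reduces to: a homomorphism $\rho'$ vanishing on the $a$-cycles and yielding a trivial bundle is identically zero.

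Next I would describe the global holomorphic sections of $E_{\rho}=i^{*}\circ RH(\rho)$ through the associated-bundle model $E_{\rho}=(\widetilde{X}\times\C^{2})/\pi_{1}(X)$ on the universal cover: a section corresponds to a $\rho$-equivariant holomorphic map $s=(s_{1},s_{2})\colon\widetilde{X}\to\C^{2}$, i.e. $s(\gamma\cdot x)=\rho(\gamma)\,s(x)$. The upper-triangular form shows that $s_{2}$ is $\pi_{1}(X)$-invariant, hence descends to a global holomorphic function on $X$ and so is a constant $c$, while $s_{1}$ satisfies $s_{1}(\gamma\cdot x)=s_{1}(x)+c\,\rho'(\gamma)$. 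The sections with $c=0$ are exactly the one-dimensional space $\C\cdot(1,0)$. If $S(\rho)$ is the trivial rank two bundle, then $h^{0}(X,E_{\rho})=2$, so there must exist a section with $c\neq 0$; after rescaling I may take $c=1$.

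For such a section, $d s_{1}$ is invariant under the deck group (the differences $c\,\rho'(\gamma)$ are constants), so it descends to a global holomorphic $1$-form $\omega$ on $X$ whose periods are $\int_{\gamma}\omega=\rho'(\gamma)$ for every $\gamma\in\pi_{1}(X)$. In particular $\int_{a_{i}}\omega=\rho'(a_{i})=0$ for all $i$. The key point is then the classical fact that a holomorphic $1$-form on a compact Riemann surface with all $a$-periods vanishing must be zero; equivalently, with respect to a normalized basis $\omega_{1},\dots,\omega_{g}$ of $H^{0}(X,\Omega^{1})$ with $\int_{a_{i}}\omega_{j}=\delta_{ij}$, the $a$-period map $H^{0}(X,\Omega^{1})\to\C^{g}$ is an isomorphism. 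Hence $\omega=0$, so $\rho'(\gamma)=\int_{\gamma}\omega=0$ for all $\gamma$, and $\rho$ is trivial.

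I expect the only genuine obstacle to be this last step: converting triviality of the holomorphic bundle into $\omega=0$. Everything before it is a routine unwinding (Kolchin normal form, the equivariant-section description of $E_{\rho}$, and the constancy of $s_{2}$), but the conclusion relies essentially on the nondegeneracy of the period matrix, i.e. on the Riemann bilinear relations expressing the positivity of $\tfrac{i}{2}\int_{X}\omega\wedge\bar\omega$ for $\omega\neq 0$. Conceptually, this is the transversality between the space of classes with vanishing $a$-periods and the kernel $H^{0}(X,\Omega^{1})$ of the Hodge projection $H^{1}(X,\C)\to H^{1}(X,\mathcal{O}_{X})$. In the homological language of Section \ref{sec_schottky_functors_and_cohomology}, the lemma is precisely the injectivity of $S_{*}\colon YExt^{1}_{\C\Sigma}(\C,\C)\to YExt^{1}_{\mathcal{O}_{X}}(\mathcal{O}_{X},\mathcal{O}_{X})$, and since both spaces are $g$-dimensional by Proposition \ref{prop_explicit_computation_some_Ext}, injectivity will automatically upgrade to an isomorphism.
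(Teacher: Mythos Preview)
Your proof is correct and follows essentially the same approach as the paper: put $\rho$ in upper-triangular form, extract from the triviality of $E_{\rho}$ a holomorphic function on $\widetilde{X}$ satisfying $v(\gamma\cdot x)=v(x)+\rho'(\gamma)$, descend $dv$ to a holomorphic $1$-form $\omega$ on $X$ with $\int_{\gamma}\omega=\rho'(\gamma)$, and conclude $\omega=0$ from the vanishing of its $a$-periods via the normalized basis of $H^{0}(X,\Omega^{1})$. The only cosmetic difference is how the function $v$ is produced: the paper obtains it as the $(1,2)$-entry of a full equivariant frame $f:\widetilde{X}\to GL_{2}(\C)$ (invoking \cite[4 Lemma 2]{flo} and normalizing at a base point), whereas you obtain it as the first component of a single global section, using the dimension count $h^{0}(X,E_{\rho})=2$ to guarantee a section with $c\neq 0$.
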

\begin{proof}
(The proof is analogous to \cite[Lemme 6.3]{mat}) We may assume that $\rho$ is a
representation into $GL_{2}(\C)$ and given in matrix form by
\begin{equation}\label{eq_choice_function_RS}
\rho(\gamma) =
\left( \begin{array}{c  c }
1 & \rho '(\gamma)  \\
0 & 1 \\
\end{array} \right),
\end{equation}
for a group homomorphism $\rho ': \Sigma \longrightarrow \C$. Then since
$S(\rho)$ is trivial, we can apply \cite[4 Lemma 2]{flo}, and so there exists a
holomorphic map $f$ from the universal covering $\widetilde{X}$ of $X$ into
$GL_{2}(\C)$, such that $f(\gamma \cdot x) =\rho(\gamma) \cdot f(x) $. We may
fix a point $\widetilde{x_{0}}$ above a base point $x_{0}$ of $X$, and replace
$f(x)$ by $f(x) \cdot f(\widetilde{x_{0}})^{-1}$ so that we can assume
$f(\widetilde{x_{0}})=I$. Let us set
\begin{equation}\label{eq_difference_between_VB}
f(x) =
\left( \begin{array}{c  c }
u(x) & v(x)  \\
w(x) & z(x) \\
\end{array} \right).
\end{equation}
From the equation $f(\gamma \cdot x) =\rho(\gamma) \cdot f(x) $, one can deduce
the following equations: 
$w(\gamma \cdot x) = w(x)$ and $z(\gamma \cdot x) = z(x)$. Therefore $w$ and $z$
are invariant under the $\pi_{1}(X)$-action, and so define global holomorphic
sections of $X$, and so they must be constant. From our normalization condition
we see that $w=0$ and $z=1$. This implies also the equation $u(\gamma \cdot
x)=u(x)$, so by the same reasoning as before, we can see that $u=1$. From these
computations we obtain the equation $v(\gamma \cdot x) =v(x) + \rho '(\gamma)$.
Therefore, the holomorphic differential $dv$ is invariant under the
$\pi_{1}(X)$-action, and so it descends to a holomorphic differential $\omega$
on $X$. We have the following equation:
$$
\int_{\gamma} \omega = \int_{\widetilde{x_{0}}}^{\gamma \widetilde{x_{0}}} dv=
v(\gamma \widetilde{x_{0}}) - v(\widetilde{x_{0}}) = \rho '(\gamma). 
$$
By our initial assumptions on $\rho$, the homomorphism $\rho '$ must vanish for
the $a$-periods $a_{1},\ldots ,a_{g}$. On the other side, there exists a basis
$\zeta_{1},\ldots ,\zeta_{g}$ of holomorphic differentials on $X$ such that
$\int_{a_{i}}\zeta_{k}=\delta_{ik}$ \cite[III 2.8]{fakr}. This implies that
$\omega$ must be the trivial differential, and so $\rho '$ and $\rho$ must be
trivial.
\end{proof}

\begin{lem}\label{lem_yoneda_ext1_S_iso_RS}
The map
$$
S_{*}: YExt^{1}_{\C \Sigma}(\C,\C) \longrightarrow YExt^{1}_{{\mathcal
O}_{X}}({\mathcal O}_{X},{\mathcal O}_{X})
$$
is an isomorphism of $g$-dimensional $\C$-vector spaces.
\end{lem}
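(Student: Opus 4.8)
The plan is to follow verbatim the strategy used for the complex torus in Lemma \ref{lem_yoneda_ext1_S_iso_CT}, the only structural difference being that the role played there by \cite[Lemme 6.3]{mat} is now taken over by Lemma \ref{lem_unipotent_rk2_S_injective}. First I would record that source and target both have dimension $g$: by Proposition \ref{prop_explicit_computation_some_Ext} d) the space $Ext^{1}_{\C\Sigma}(\C,\C)$ is free of rank $g$, by Proposition \ref{prop_explicit_computation_some_Ext} b) the space $Ext^{1}_{\mathcal{O}_{X}}(\mathcal{O}_{X},\mathcal{O}_{X})$ has dimension $g$, and the comparison isomorphisms $C_{\C\Sigma}$ and $C_{\mathcal{O}_{X}}$ of Proposition \ref{prop_compatibility_Yoneda_Derived_ext} identify these derived groups with the corresponding Yoneda groups. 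Because a linear map between finite-dimensional vector spaces of equal dimension is an isomorphism as soon as it is injective, it then suffices to prove that $S_{*}$ is injective.

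For injectivity I would take an extension class $[Y]$ in $YExt^{1}_{\C\Sigma}(\C,\C)$ represented by a short exact sequence $0 \to \C \to \rho \to \C \to 0$ of $\C\Sigma$-modules, where $\rho$ is a rank-two unipotent $\C\Sigma$-module, and assume $S_{*}([Y])=0$. Since $S(\C)=\mathcal{O}_{X}$ and $S_{*}$ sends $[Y]$ to the pushed-forward extension $[S(Y)]$, the vanishing of $S_{*}([Y])$ means that $S(Y)$ splits, so $S(\rho)$ is the trivial rank-two $\mathcal{O}_{X}$-module. The induced $\C\pi_{1}(X)$-module $\tilde{\alpha}^{*}(\rho)$ is then a rank-two unipotent module, induced by $\rho$, whose associated bundle $E_{\tilde{\alpha}^{*}(\rho)}$ coincides with $S(\rho)$ and is therefore trivial. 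Lemma \ref{lem_unipotent_rk2_S_injective} now applies and yields that $\rho$ is trivial: in the upper-triangular form (\ref{eq_choice_function_RS}) its off-diagonal homomorphism $\rho':\Sigma\to\C$ satisfies $\rho'\circ\alpha=0$, and surjectivity of $\alpha$ upgrades this to $\rho'=0$. Hence $Y$ is the split sequence, so $[Y]=0$, which proves injectivity and therefore that $S_{*}$ is an isomorphism of $g$-dimensional $\C$-vector spaces.

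The genuinely substantive input has already been isolated into Lemma \ref{lem_unipotent_rk2_S_injective}, so I expect the argument assembled here to be essentially formal. The only points deserving care are the dimension count, which must cite the correct parts b) and d) of Proposition \ref{prop_explicit_computation_some_Ext} rather than the torus parts a) and c), and the bookkeeping that passes between $\C\Sigma$-modules and $\C\pi_{1}(X)$-modules through the change-of-rings functor $\tilde{\alpha}^{*}$, where surjectivity of $\alpha$ is what converts triviality of the pulled-back representation into triviality of the original $\C\Sigma$-module. The real difficulty, namely deducing triviality of $\rho$ from triviality of the holomorphic bundle $S(\rho)$ — which on a Riemann surface rests on the fact that a holomorphic differential with vanishing $a$-periods must vanish — lives entirely inside Lemma \ref{lem_unipotent_rk2_S_injective} and so need not be repeated.
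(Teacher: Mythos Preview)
Your proposal is correct and follows essentially the same approach as the paper: cite parts b) and d) of Proposition~\ref{prop_explicit_computation_some_Ext} together with the comparison isomorphisms of Proposition~\ref{prop_compatibility_Yoneda_Derived_ext} to match dimensions, then reduce to injectivity and invoke Lemma~\ref{lem_unipotent_rk2_S_injective} in place of \cite[Lemme~6.3]{mat}. The paper's own proof is more terse but makes exactly this substitution, and your added bookkeeping on passing between $\C\Sigma$- and $\C\pi_{1}(X)$-modules via surjectivity of $\alpha$ is a harmless (and arguably clarifying) elaboration.
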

\begin{proof}
Both $\C$-vector spaces are of dimension $g$ by Proposition
\ref{prop_explicit_computation_some_Ext} b) and d) and the comparison
isomorphisms $C_{\C \Sigma}$ and $C_{{\mathcal O}_{X}}$ from Proposition
\ref{prop_compatibility_Yoneda_Derived_ext}.
So it
suffices to show that $S_{*}$ is
injective. The proof is now analogous to Lemma \ref{lem_yoneda_ext1_S_iso_CT}
with the difference that we can apply Lemma \ref{lem_unipotent_rk2_S_injective}
instead of \cite[Lemme 6.3]{mat}.
\end{proof}

We are now able to show \textbf{Theorem \ref{thm_main_for_riemann_surfaces}}:
\begin{proof}
Using the same arguments as in the proof of Theorem \ref{thm_main_for_tori}, we
can deduce that $S$ is fully faithful (for unipotent representations). Let us
now show using induction that $S$ is essentially surjective. We first are going
to show that the map $S_{*}:Ext_{\C\Sigma}^{1}(\C,U) \longrightarrow
Ext_{{\mathcal
O}_{X}}^{1}(S(\C),S(U))$ is an isomorphism for all unipotent $\C\Sigma$-modules
$U$. For level $1$, i.e., $U=\C$ this follows from Lemma
\ref{lem_yoneda_ext1_S_iso_RS} and Proposition
\ref{prop_compatibility_Yoneda_Derived_ext}. So, let us assume that this map is
an isomorphism
for all unipotent $\C\Sigma$-modules of level less than $r$, and let us fix a
unipotent $\C\Sigma$-module $U_{r}$ of rank $r$. Then since $U_{r}$ is
unipotent, there exists a unipotent $\C\Sigma$-module $U_{r-1}$ of level $r-1$
and an exact sequence
\begin{equation}
 0\rightarrow  U_{r-1} \rightarrow U_{r} \rightarrow \C \rightarrow 0.
\end{equation}
Moreover, by Propositions \ref{prop_naturality_of_S_and_cohomology} and
\ref{prop_compatibility_Yoneda_Derived_ext}
we have the following
commutative diagram:
\begin{diagram}[width=2pt,height=2em]
 Hom(\C,\C) & \rTo^{\delta} & Ext^{1}(\C,U_{r-1}) & \rTo &
Ext^{1}(\C,U_{r})
&\rTo & Ext^{1}(\C,\C) & \rTo& 0\\
   \dTo^{S_{*}} &   & \dTo^{S_{*}} &   & \dTo^{S_{*}} &  & \dTo^{S_{*}} &
&\dTo^{S_{*}}\\
 Hom(S(\C),S(\C)) & \rTo^{\delta} & Ext^{1}(S(\C),S(U_{r-1})) & \rTo &
Ext^{1}(S(\C),S(U_{r})) &\rTo & Ext^{1}(S(\C),S(\C)) & \rTo &0\\
\end{diagram}
(The two terms $Ext^{2}(\C,U_{r-1})$ and $Ext^{2}(S(\C),S(U_{r-1})$ on the right
hand side vanish, because of Proposition
\ref{prop_explicit_computation_some_Ext} b) and d)). It is clear that the first
vertical arrow is
an isomorphism, and the second and fourth vertical arrows are isomorphisms by
induction hypothesis. Now, it follows from the five lemma that the third vertical
arrow is an isomorphism, so our claim follows by induction. We can now show that
$S$ is essentially surjective. This is known in the level one case, so let us
assume that all unipotent vector bundles of level less than $r$ are in the
essential image of $S$. Let us fix a unipotent vector bundle $G_{r}$ of level
$r$. Then since $G_{r}$ is unipotent, there exists a unipotent vector bundle
$G_{r-1}$ of level $r-1$ and an exact sequence
\begin{equation}
 0\rightarrow  G_{r-1} \rightarrow G_{r} \rightarrow {\mathcal O}_{X}
\rightarrow 0,
\end{equation}
defining a class in $YExt^{1}({\mathcal O}_{X},G_{r-1})$. By induction
hypothesis
there exists an unipotent representation $U_{r-1}$ with $S(U_{r-1})=G_{r-1}$.
Then by Proposition
\ref{prop_compatibility_Yoneda_Derived_ext} the morphism
$S_{*}:YExt^{1}(\C,U_{r-1}) \rightarrow YExt^{1}({\mathcal O}_{X},G_{r-1})$ is
surjective and so there exists an exact sequence of unipotent
$\C\Sigma$-modules 
\begin{equation}
 0\rightarrow  U_{r-1} \rightarrow U_{r} \rightarrow \C \rightarrow 0,
\end{equation}
and a commutative diagram

\begin{diagram}[width=2pt,height=2em]
0 & \rTo & S(U_{r-1}) & \rTo & S(U_{r}) & \rTo & S(\C) &\rTo & 0\\
  & &     \dTo^{=} &   & \dTo^{f} &   & \dTo^{=} &  \\
0 &\rTo & G_{r-1} & \rTo & G_{r} & \rTo & {\mathcal O}_{X}  &\rTo & 0.\\
\end{diagram}
By the five lemma the middle arrow must be an isomorphism so $G_{r}$ lies in the
essential image of $S$, and by induction it follows that $S$ is essentially
surjective. Finally, since $S$ is fully faithful and essentially surjective, it
follows by general category theory that $S$ induces an equivalence of categories
for unipotent objects.
\end{proof} 

\section{Flat and Schottky bundles over complex
tori}\label{sec_Flat_and_Schottky_over_tori}

Let $X$ be a complex torus, and let $E$ be a vector bundle over $X$. 
The bundle $E$ is called homogeneous if $t_a^*E\cong E$ for all $a\in X$, where
$t_a:X\to X$, $t_a(x)=x+a$ denotes the translation-by-$a$ morphism.
The theorem of Matsushima and Morimoto can be stated as follows (see \cite{mat}
and \cite{mor}).
\begin{thmdefstyle}[Matsushima, Morimoto]
\label{thm:flat-homogeneous-unipotent} Let $E$ be a vector bundle over a complex
torus $X$. Then the following properties are equivalent:
\begin{itemize}
\item[a)] $E$ admits a flat connection.
\item[b)] $E$ admits a flat holomorphic connection.
\item[c)] $E$ is homogeneous.
\item[d)] Every indecomposable component of $E$ has the form $L\otimes E_{\rho}$
where $L$ is a line bundle of degree zero over $X$ and $\rho$ is a unipotent
representation.
\end{itemize}
\end{thmdefstyle}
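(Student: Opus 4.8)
The plan is to organize everything around the associated-bundle functor $\rho \mapsto E_{\rho}$ together with the representation theory of the abelian group $\Lambda\cong\pi_{1}(X)$. The one algebraic fact I would isolate first is that every finite-dimensional complex representation $\rho$ of the abelian group $\Lambda$ splits as a direct sum of indecomposables, each of the form $\chi\otimes U$ with $\chi:\Lambda\to\C^{*}$ a character and $U$ a unipotent representation; this is the simultaneous generalized-eigenspace decomposition of the commuting operators $\rho(\lambda)$, together with the observation that on a joint generalized eigenspace one may factor out the scalar character and be left with a unipotent action. Under $\rho\mapsto E_{\rho}$ this reads: $E_{\chi}$ is a degree-zero line bundle and $E_{U}$ is a flat unipotent bundle, so $E_{\rho}$ decomposes into pieces $E_{\chi}\otimes E_{U}$ of exactly the shape in d), and each such piece is holomorphically indecomposable. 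I would then establish the formal cycle $\mathrm{d)}\Rightarrow\mathrm{b)}\Rightarrow\mathrm{a)}$ and $\mathrm{b)}\Rightarrow\mathrm{c)}$, and close the loop with the two substantial implications $\mathrm{c)}\Rightarrow\mathrm{d)}$ and $\mathrm{a)}\Rightarrow\mathrm{b)}$.

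The formal implications are quick. For $\mathrm{d)}\Rightarrow\mathrm{b)}$: a degree-zero line bundle on a torus is flat---it is $E_{\chi}$ for a character $\chi$, since the map $\Hom(\Lambda,\C^{*})\to\mathrm{Pic}^{0}(X)$ is surjective---a unipotent bundle admits a flat holomorphic connection by Proposition \ref{rem_Unipotent_bundles_are_flat}, and flat holomorphic connections are preserved under tensor products and direct sums; hence each $L\otimes E_{\rho}$, and therefore $E$, admits one. The implication $\mathrm{b)}\Rightarrow\mathrm{a)}$ is trivial. For $\mathrm{b)}\Rightarrow\mathrm{c)}$: a bundle with a flat holomorphic connection is $E_{\rho}$ for a representation $\rho$ of $\pi_{1}(X)=\Lambda$, realized as $(V\times\C^{r})/\Lambda$; a translation $t_{a}$ lifts to a translation of the cover $V$ that commutes with the $\Lambda$-action and leaves $\rho$ unchanged, so $t_{a}^{*}E\cong E$ and $E$ is homogeneous.

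The implication $\mathrm{c)}\Rightarrow\mathrm{d)}$ is Matsushima's classification of homogeneous bundles, which I would invoke from \cite{mat}: the hypothesis $t_{a}^{*}E\cong E$ for all $a$ forces every indecomposable summand to be of the form $L\otimes E_{\rho}$ with $L\in\mathrm{Pic}^{0}(X)$ and $\rho$ unipotent. The mechanism is to analyze the action of the commutative group of translations on $E$ and to apply the representation-theoretic splitting of the first paragraph. Combined with $\mathrm{d)}\Rightarrow\mathrm{b)}\Rightarrow\mathrm{c)}$ this gives $\mathrm{c)}\Leftrightarrow\mathrm{d)}$, and feeding the same splitting into $E_{\rho}$ yields $\mathrm{b)}\Rightarrow\mathrm{d)}$ directly.

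The hard part will be $\mathrm{a)}\Rightarrow\mathrm{b)}$, the only genuinely analytic step. If the flat connection $\nabla$ is compatible with the holomorphic structure, i.e. $\nabla^{0,1}=\bar\partial_{E}$, then the argument is short: since $\bar\partial_{E}^{2}=0$ automatically, flatness $\nabla^{2}=0$ forces the $(2,0)$ and $(1,1)$ parts of the curvature to vanish, which says precisely that $\nabla^{1,0}$ is a holomorphic connection and that it is flat, giving b). The real obstacle is that an arbitrary flat connection need not be compatible with $\bar\partial_{E}$; to remove this I would appeal to Hodge theory on the compact K\"ahler torus (or to Morimoto's direct construction in \cite{mor}) to replace $\nabla$ by a compatible flat connection, equivalently to show that the underlying holomorphic bundle of a flat bundle on $X$ is isomorphic to some $E_{\rho}$. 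This compatibility step is where all the difficulty is concentrated, the other implications being formal consequences of the representation theory of $\Lambda$ and of Matsushima's theorem.
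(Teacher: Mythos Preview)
The paper does not prove this theorem: it is stated as the result of Matsushima \cite{mat} and Morimoto \cite{mor} and is used as a black box throughout Section~\ref{sec_Flat_and_Schottky_over_tori}. So there is no ``paper's own proof'' to compare against; your proposal is in effect a roadmap of what lies behind the citation.

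As a roadmap it is broadly accurate. Your identification of the logical skeleton is correct: the implications $\mathrm{d)}\Rightarrow\mathrm{b)}\Rightarrow\mathrm{a)}$ and $\mathrm{b)}\Rightarrow\mathrm{c)}$ are formal, $\mathrm{c)}\Rightarrow\mathrm{d)}$ is Matsushima's classification, and $\mathrm{a)}\Rightarrow\mathrm{b)}$ is the analytic input supplied by Morimoto. Two comments, though. First, your sketch still defers both substantial implications to \cite{mat} and \cite{mor}, so it is not really an independent argument---it is the same citation the paper makes, unpacked one level. Second, there is a small gap in your $\mathrm{b)}\Rightarrow\mathrm{d)}$ paragraph: decomposing the \emph{representation} $\rho$ of $\Lambda$ into pieces $\chi\otimes U$ is fine, but asserting that each $E_{\chi}\otimes E_{U}$ is \emph{holomorphically} indecomposable (so that this matches the Krull--Schmidt decomposition of $E$ in $\mathrm{d)}$) is itself a nontrivial statement about bundles on tori, and is part of what Matsushima proves rather than something you have established. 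Without it you only get that $E$ is a direct sum of bundles of the required shape, not that its indecomposable components have that shape.
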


As in the introduction, let us write $X=V/\Lambda$, and consider the
morphism $\alpha:\Lambda = \pi_{1}(X)\to \Sigma$ from which Schottky bundles
over $X$ are defined.

\begin{lem}
\label{lem:line-bundle-Schottky}Every line bundle of degree zero
over a complex torus is Schottky.\end{lem}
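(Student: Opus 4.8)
The plan is to reduce the statement to a rank-one instance of the argument already carried out in Proposition \ref{pro:unipotent-Schottky}. A line bundle $L$ of degree zero on $X=V/\Lambda$ lies in $\operatorname{Pic}^0(X)$, hence is homogeneous, so by Theorem \ref{thm:flat-homogeneous-unipotent} it admits a flat holomorphic connection (equivalently, this is the classical Appell--Humbert description of $\operatorname{Pic}^0(X)$). Consequently $L\cong E_\chi$ for some character $\chi:\Lambda\to\C^*=GL_1(\C)$. The goal is then to produce a character $\chi'$ that is trivial on $\ker\alpha$ — so that it descends through $\alpha$ to a $\C\Sigma$-module — while still satisfying $E_{\chi'}\cong E_\chi$; it will follow that $L\cong S(\chi')$ is Schottky.

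First I would record the shape of $\ker\alpha$. Because the period matrix has the normalized form $\Pi=(Z,I)$, the generators $\lambda_{g+1},\dots,\lambda_{2g}$ are exactly $e_1,\dots,e_g$, so $\ker\alpha=I\cdot\Z^g=\Z e_1\oplus\cdots\oplus\Z e_g$, the standard integer lattice in the basis $e_1,\dots,e_g$. This is precisely the feature that makes the construction work: the lattice vectors on which $\chi'$ must be made trivial span a $\C$-basis of $V$, leaving enough freedom to prescribe the correcting term.

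Next I would mimic the construction in Proposition \ref{pro:unipotent-Schottky}. Choose $a_j\in\C$ with $\exp(a_j)=\chi(\lambda_{g+j})^{-1}$ for $j=1,\dots,g$, set $\omega=a_1\,dz_1+\cdots+a_g\,dz_g$ (with $z_1,\dots,z_g$ dual to $e_1,\dots,e_g$) and $f(z):=\exp\!\big(\int_0^z\omega\big)$, a nowhere-vanishing holomorphic function $V\to\C^*$. Since $\omega$ has constant coefficients, $f(\lambda+z)f(z)^{-1}=\exp(\int_0^\lambda\omega)$ is independent of $z$, and for $\lambda=\sum_k c_k e_k\in I\cdot\Z^g$ a short computation using $\int_{e_j}dz_k=\delta_{jk}$ gives $f(\lambda+z)f(z)^{-1}=\exp(\sum_k c_k a_k)=\prod_k\chi(e_k)^{-c_k}=\chi(\lambda)^{-1}$. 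Hence $\chi'(\lambda):=f(\lambda+z)f(z)^{-1}\chi(\lambda)$ is a character with $\chi'(\lambda)=1$ for all $\lambda\in I\cdot\Z^g$, so $\chi'$ factors through $\alpha$ and defines a genuine $\C\Sigma$-module. Finally, \cite[Lemme 5.1]{mor} shows that the cohomologous cocycles $\chi$ and $\chi'$ define isomorphic line bundles, so $L\cong E_\chi\cong E_{\chi'}=S(\chi')$ is Schottky.

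The only point requiring care, and the sole difference from the unipotent case, is that $\C^*$ is not simply connected, so the logarithms $a_j$ are not canonically defined; but since they are needed only at the finitely many generators $\lambda_{g+j}$, any choice of branch suffices, and the commutativity of $\C^*$ removes the ordering subtleties that the unipotent argument had to handle through the Lie algebra $\mathfrak{h}$. I therefore expect the computational part to be entirely routine, the real content being the reduction $L\cong E_\chi$ and the appeal to \cite[Lemme 5.1]{mor} that cohomologous cocycles yield isomorphic bundles.
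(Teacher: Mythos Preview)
Your proposal is correct and follows essentially the same route as the paper's proof: reduce to Proposition~\ref{pro:unipotent-Schottky} by writing $L\cong E_\chi$ for a character $\chi:\Lambda\to\C^*$, choose preimages $a_j$ under $\exp$, build $f(z)=\exp(\int_0^z\omega)$, and twist $\chi$ to a Schottky character $\chi'$ giving an isomorphic bundle via \cite[Lemme~5.1]{mor}. Your additional remarks (identifying $\ker\alpha=I\cdot\Z^g$ explicitly and noting the non-canonicity of the logarithm) only make the argument more transparent; the content is the same.
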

\begin{proof}
The proof is entirely analogous to that of Proposition
\ref{pro:unipotent-Schottky}.
Any degree zero line bundle on $X=V/\Lambda$ is necessarily flat,
so it is of the form $L_{\rho}$, for some $\rho:\pi_{1}(X)=\Lambda\to
GL_{1}(\mathbb{C})=\mathbb{C}^{*}$. As $\exp$ is surjective, we can find
$A_{j}\in \C$  for $j=1,...,g$, such that $\exp(A_{j})=\rho(e_{j})^{-1}$, and we
can define $\omega:=A_{1}dz_{1}+...+A_{g}dz_{g}$
and $f(z)=\exp(\int_{0}^{z}\omega)$. Then, the same computation as in
Proposition \ref{pro:unipotent-Schottky} gives $f(\lambda + z) \cdot
f(z)^{-1}=\rho(\lambda)^{-1}$
for all $\lambda\in I \cdot \Z^{g}$ which implies that the representation
defined by $\sigma(\lambda):=f(\lambda + z) \cdot f(z)^{-1} \cdot
\rho(\lambda)^{-1}$
factors over $\Sigma$, and produces a line bundle isomorphic to $L_{\rho}$. 
\end{proof}

As a consequence we can prove \textbf{Theorem \ref{thm:Flat-Schottky-vector}}:
\begin{proof}
Clearly, if $E$ is Schottky then it admits a flat connection. Conversely,
if $E$ admits a flat connection, then using Theorem
\ref{thm:flat-homogeneous-unipotent},
each of its indecomposable components is of the form $L\otimes E_{\rho}$
where $\rho$ is unipotent and $L$ a line bundle of degree zero.
Thus, by Theorem \ref{thm_main_for_tori} and Lemma
\ref{lem:line-bundle-Schottky},
$E_{\rho}$ and $L$ are both Schottky. The theorem then follows since
tensor products and direct sums of Schottky bundles are Schottky.
\end{proof}

\section{Unipotent and Schottky principal $G$-bundles}

\subsection{Principal and flat $G$-bundles}

Let $X$ be a complex manifold. Let $G$ be an \emph{affine
algebraic group over $\mathbb{C}$} (see for example \cite[Chapter I]{bor}) and
let $P$ be a holomorphic principal $G$-bundle over $X$.
  For definitions and elementary properties of holomorphic principal bundles,
 see for example \cite{Ba09}.

Recall that principal $G$-bundles over $X$ form a category
and in particular,
the notion of \emph{isomorphism} between principal $G$-bundles is
naturally defined. 

Recall also a few important constructions.
Given another complex affine algebraic group $H$, a principal $H$-bundle
over $X$ and a group homomorphism $\phi:H\to G$, we can construct
a principal $G$-bundle $\phi_{*}P$ over the same manifold $X$,
by defining it as $P\times G/\sim$ where $(p,g)\sim(p\cdot h,\phi(h)^{-1}g)$
for all $p\in P$, $h\in H$ and $g\in G$. The bundle $\phi_{*}P$
is said to be the \emph{$G$-bundle induced by the morphism $\phi$}.

One other related construction is that of an \emph{associated
vector bundle}. Given a representation of $H$, as a map $\psi:H\to GL(M)$,
where $M$ is some finite dimensional vector space, we can construct
the vector bundle $\psi_{*}P$ defined by $P\times M/\sim$ where
now $(p,m)\sim(p\cdot h,\psi(h)^{-1}m)$. The bundle $\psi_{*}P$
is called the \emph{vector bundle associated to the representation
$\psi$}. Since the two above constructions are intimately related,
it is clear that $\psi_{*}P$ can be also regarded as a principal
$GL(M)$-bundle.
\begin{Def}
Given a $G$-bundle $P$ and a subgroup $i:H\hookrightarrow G$, we
say that $P$ admits \emph{a reduction of structure group to $H$}
(or a $H$-structure) if there exists a $H$-bundle $P_{H}$ such
that $i_{*}P_{H}\cong P$ (as principal $G$-bundles). The bundle
$P_{H}$ is then called \emph{a reduction of $P$ to a $H$-bundle}.
\end{Def}
For example, a principal $G$-bundle is \emph{trivializable}, in the
sense that it is isomorphic to the trivial principal $G$-bundle given
by the projection onto the second factor $G\times X\to X$, if and
only if it admits a trivial structure, that is a reduction of structure
group to the trivial subgroup.
\begin{lem}
\label{lem:Induced-reduction}Let $H\subset G$ be a subgroup and
$G'$ another group. Let $P$ be a $G$-bundle and $\phi:G\to G'$
a homomorphism. If $P$ has a $H$-structure, then $\phi_{*}P$ has
a $\phi(H)$-structure.\end{lem}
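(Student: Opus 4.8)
The plan is to exploit the functoriality of the induced-bundle construction together with the factorization of $\phi$ through the image $\phi(H)$. Write $i:H\hookrightarrow G$ for the inclusion, and use the hypothesis to choose an $H$-bundle $P_{H}$ with $i_{*}P_{H}\cong P$. Since $\phi(H)$ is the image of the group homomorphism $\phi\circ i$, it is a subgroup of $G'$; let $j:\phi(H)\hookrightarrow G'$ be its inclusion and let $\psi:H\to\phi(H)$ be the corestriction of $\phi\circ i$ to its image, so that $\phi\circ i=j\circ\psi$. The reduction we seek will be the $\phi(H)$-bundle $\psi_{*}P_{H}$.

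First I would apply the construction $\phi_{*}$ to the isomorphism $i_{*}P_{H}\cong P$, obtaining $\phi_{*}P\cong\phi_{*}(i_{*}P_{H})$. Next I would invoke functoriality of the induced-bundle construction, namely that for composable homomorphisms $H\overset{\chi}{\to}K\overset{\phi}{\to}G'$ and an $H$-bundle $Q$ one has a natural isomorphism $\phi_{*}(\chi_{*}Q)\cong(\phi\circ\chi)_{*}Q$. Applied with $\chi=i$ and $K=G$ this gives $\phi_{*}(i_{*}P_{H})\cong(\phi\circ i)_{*}P_{H}$. Using the factorization $\phi\circ i=j\circ\psi$ and the same functoriality once more, now with $\chi=\psi$ and $K=\phi(H)$, yields $(\phi\circ i)_{*}P_{H}=(j\circ\psi)_{*}P_{H}\cong j_{*}(\psi_{*}P_{H})$. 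Setting $Q:=\psi_{*}P_{H}$, which is a $\phi(H)$-bundle because $\psi:H\to\phi(H)$ and $P_{H}$ is an $H$-bundle, we conclude $j_{*}Q\cong\phi_{*}P$. By the definition of reduction of structure group, this exhibits $\phi_{*}P$ with a $\phi(H)$-structure, as required.

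The main obstacle will be establishing the functoriality isomorphism $\phi_{*}(\chi_{*}Q)\cong(\phi\circ\chi)_{*}Q$, since everything else is then formal. I would prove it directly from the explicit quotient models of the excerpt: $\chi_{*}Q=Q\times K/\!\sim$ with $(q,k)\sim(q\cdot h,\chi(h)^{-1}k)$, and forming $\phi_{*}(\chi_{*}Q)$ produces the quotient of $(\chi_{*}Q)\times G'$ under $([q,k],g')\sim([q,kk'],\phi(k')^{-1}g')$. The assignment $[[q,k],g']\mapsto[q,\phi(k)g']$ is checked to respect both equivalence relations, to be $G'$-equivariant for the right actions, and to cover the identity of $X$, with inverse $[q,g']\mapsto[[q,1],g']$; this gives the desired isomorphism of principal $G'$-bundles. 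No serious difficulty is expected beyond the bookkeeping of the two equivalence relations and the verification of right-$G'$-equivariance.
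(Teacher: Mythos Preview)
Your proposal is correct and follows essentially the same route as the paper: both use the commutative square $j\circ\psi=\phi\circ i$ (the paper writes $\phi$ for your $\psi$ by abuse of notation) and the functoriality $(\phi\circ i)_*P_H\cong\phi_*(i_*P_H)$ to conclude $j_*(\psi_*P_H)\cong\phi_*P$. The only difference is that the paper takes the functoriality isomorphism for granted, whereas you spell it out at the level of the quotient models; this is extra care but not a different argument.
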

\begin{proof}
Consider the commutative diagram\begin{eqnarray*}
H & \stackrel{\phi}{\to} & \phi(H)\\
i\downarrow &  & \downarrow j\\
G & \stackrel{\phi}{\to} & G'.\end{eqnarray*}
If $P$ has a $H$-structure then there exists $P_{H}$, a $H$-bundle
such that $i_{*}P_{H}\cong P$ as $G$-bundles. Then, $\phi_{*}P_{H}$
is a $\phi(H)$-bundle and $j_{*}$ of it is isomorphic to $\phi_{*}P$
as $G'$-bundles because \[
j_{*}(\phi_{*}P_{H})=(j\circ\phi)_{*}P_{H}=(\phi\circ
i)_{*}P_{H}=\phi_{*}i_{*}P_{H}\cong\phi_{*}P.\]
So, $\phi_{*}P$ has a $\phi(H)$-structure as wanted.
\end{proof}

Let $\pi_{1}(X)$ denote the fundamental group of $X$ (for some base point of
$X$). 

In the same way as with vector bundles, one can define natural bijections
between:
(a) principal $G$-bundles with a flat connection (flat $G$-bundles,
for short), (b) $G$-local systems, and (c) representations of $\pi_{1}(X)$
into $G$. By connection we will always mean holomorphic connection.

In particular, one has the following natural construction. For a given
representation $\rho\in\hom(\pi_{1}(X),G)$, we can associate a principal
$G$-bundle $P_{\rho}$ with a flat connection. It can be defined
using the principal $\pi_{1}(X)$-bundle $\tilde{X}\to X$, the universal
cover of $X$, as the $G$-bundle associated to the morphism $\rho:\pi_{1}(X)\to
G$
(the same construction as before, which works even though $\pi_{1}(X)$
is, of course, not an affine algebraic group). Conversely, given a
principal $G$-bundle $P$ with a flat connection, its holonomy representation
provides a natural representation (modulo conjugation by an element
in $G$). The construction of a vector bundle $E_{\rho}$ from a representation
$\rho:\pi_{1}(X)\to GL(M)$ is an instance of this construction.

If $P$ is a principal bundle with a flat connection, then for any
group morphism $\phi:G\to G'$ the associated bundle $\phi_{*}P$ receives
an induced flat connection. 


\subsection{Principal unipotent bundles}

Let us recall the definition of (algebraic) unipotent groups. Let $H$
be an affine algebraic group over $\mathbb{C}$. It has a faithful
linear representation into some general linear group, $\varphi:H\to GL(M)$,
for some finite dimensional $\C$-vector space $M$ (see \cite{bor}). An element
$h\in H$
is called unipotent if there is a natural number $n$ such that
$(\varphi(h)-1)^{n}=0$,
where $1\in GL(M)$ is the identity. Then $H$ is called unipotent
if all its elements are unipotent. It is known that any unipotent
algebraic group is isomorphic to a closed subgroup of the group of
upper triangular matrices with diagonal entries 1, and conversely
any such subgroup is unipotent (cf. Remark \ref{rem_Lekause_unipotent}). 

\begin{Def} A principal $G$-bundle
$P$ is called unipotent if there is a reduction of structure group
from $G$ to a unipotent subgroup $U\subset G$. 
\end{Def} 

Denote by $\Ad:G\to GL(\mathfrak{g})$ the adjoint representation of $G$
into its Lie algebra $\mathfrak{g}$. Given a principal $G$-bundle
$P$, the associated vector bundle $\Ad P=\Ad_{*}P$ will now play
an important role. Let $ZG$ denote the center of $G$, and note that
$G/ZG$ is again an affine algebraic group. It is clear that $\Ad$
factors through $G/ZG$, and denote by $\Ad_{0}:G/ZG\to GL(\mathfrak{g})$
the induced representation. 

Recall that, when $G$ is connected, the
homomorphism $\Ad_{0}:G/ZG\to GL(\mathfrak{g})$ is faithful, since
for a connected algebraic group $\ker\Ad=ZG$.

\begin{prop}
\label{pro:Principal-Vector}Let $P$ be a principal $G$-bundle and
assume that $ZG$ is trivial. Then, $P$ is unipotent if and only
if $\Ad P$ is a unipotent vector bundle.
\end{prop}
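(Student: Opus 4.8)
The plan is to exploit the fact that, since $G$ is connected and $ZG=\ker\Ad$ is trivial, the adjoint representation $\Ad\colon G\to GL(\mathfrak{g})$ is a faithful homomorphism, hence a closed embedding of $G$ as an algebraic subgroup of $GL(\mathfrak{g})$. I will also use two elementary facts about unipotency: any homomorphism of algebraic groups carries unipotent elements to unipotent elements, and any subgroup of a unipotent group is unipotent. Throughout, write $n=\dim\mathfrak{g}$ and identify $GL(\mathfrak{g})\cong GL_n(\C)$, and recall that a rank $n$ vector bundle is unipotent precisely when the structure group of its frame bundle reduces to a maximal unipotent subgroup $U\subset GL_n(\C)$ (the strictly upper triangular matrices); such a reduction is the same datum as a full flag of subbundles with graded pieces isomorphic to ${\mathcal O}_X$.

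For the forward implication, suppose $P$ is unipotent, so it admits a reduction of structure group to a unipotent subgroup $U_0\subset G$. Applying Lemma \ref{lem:Induced-reduction} to $\Ad\colon G\to GL(\mathfrak{g})$ shows that $\Ad P=\Ad_*P$ has a reduction of structure group to $\Ad(U_0)$. Since $\Ad(U_0)$ is the image of a unipotent group under a homomorphism, it is a unipotent subgroup of $GL(\mathfrak{g})$, hence contained in a conjugate of $U$; the associated flag then exhibits $\Ad P$ as a unipotent vector bundle.

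The substance is the converse. Assuming $\Ad P$ is unipotent, its frame bundle $\Ad_* P$ carries two reductions of structure group: one to $G$, which is exactly $P$ via the embedding $\Ad$, and one to the maximal unipotent $U\subset GL_n(\C)$ coming from the unipotent flag. I would translate the $U$-reduction into a $G$-equivariant holomorphic map $P\to GL_n(\C)/U$, equivalently a section of the associated bundle $P\times^G (GL_n(\C)/U)$. The decisive structural observation is that the stabilizer in $G$ of any point $gU\in GL_n(\C)/U$ is $G\cap gUg^{-1}$, which, being a subgroup of the unipotent group $gUg^{-1}$, is itself unipotent. Thus, if this section lands in a single $G$-orbit $G/(G\cap gUg^{-1})$, it is precisely a reduction of $P$ to the unipotent subgroup $G\cap gUg^{-1}$, which is what we want.

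The main obstacle is therefore to guarantee that the two reductions are globally in a compatible relative position, so that the section does factor through one $G$-orbit and yields a reduction to a single fixed unipotent subgroup rather than to pointwise-varying conjugates. Concretely, the relative position defines a map from $X$ to the double coset space $G\backslash GL_n(\C)/U$, and one must show it is constant. I would handle this using the connectedness of $X$, and hence of $P$, together with the fact that all the stabilizers in question are unipotent: the intersection $P\cap R$ of the $G$-reduction $P$ with the chosen $U$-reduction $R$ inside $\Ad_* P$ is, over each point, a coset of some $G\cap gUg^{-1}$, and the rigidity of unipotent groups together with the canonicity of the graded pieces of the unipotent filtration should force this intersection to be a genuine principal bundle for a fixed unipotent subgroup of $G$. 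Once $P\cap R$ is seen to be such a $(G\cap U)$-bundle inducing $P$, the reduction is established and the proposition follows.
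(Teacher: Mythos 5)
Your forward implication is correct and coincides with the paper's: apply Lemma \ref{lem:Induced-reduction} to $\Ad$ and use that a homomorphic image of a unipotent group is unipotent, hence conjugate into the upper-triangular unipotent subgroup of $GL(\mathfrak{g})$. The problem is the converse, and you have in fact located exactly where the difficulty sits --- and then not resolved it. You correctly reduce everything to the claim that the section of $P\times^{G}(GL_n(\C)/U)$ determined by the unipotent flag of $\Ad P$ lies in a single $G$-orbit, i.e.\ that the relative-position map from $X$ to the double coset space $G\backslash GL_n(\C)/U$ is constant. But your justification --- connectedness of $X$ plus ``the rigidity of unipotent groups together with the canonicity of the graded pieces \ldots should force'' --- is not an argument, and the claim is far from automatic. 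The space $G\backslash GL_n(\C)/U$ is a badly non-separated quotient: here $\dim G=n=\dim\mathfrak{g}$ while $\dim\bigl(GL_n(\C)/U\bigr)=n(n+1)/2$, so as soon as $n\geq 2$ (i.e.\ whenever $G$ is nontrivial) the $G$-orbits necessarily form positive-dimensional families and no finiteness is available; and even in situations with finitely many orbits, the relative position of two reductions of structure group is in general only a constructible invariant, which can jump on proper analytic subsets --- exactly as the relative position of two flags in a vector bundle jumps along the locus where they fail to be transverse. Note also that the proposition is stated over an arbitrary complex manifold, so no compactness can be invoked to rigidify the picture. As written, the decisive step of your converse is missing.

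For comparison, the paper argues the converse with cocycles: it takes transition functions $g_{ij}$ of $P$, so that $\Ad(g_{ij})$ are transition functions of $\Ad P$, asserts that ``wlog'' there is a basis of $\mathfrak{g}$ in which all the $\Ad(g_{ij})$ are upper triangular with $1$'s on the diagonal, and then finishes with the easy pointwise fact (the only place where $ZG=1$ and the connectedness of $G$, i.e.\ faithfulness of $\Ad$, are used): $(\Ad(g)-1)^{n}=0$ forces $g$ to be a unipotent element of $G$, so all $g_{ij}$ take values in the fixed unipotent subgroup $\Ad^{-1}(U)\cap G$, giving the desired reduction. You should recognize the paper's ``wlog'' as precisely the obstacle you isolated: trivializations of $\Ad P$ adapted to the unipotent filtration need not be induced by trivializations of $P$, so assuming that the cocycle $\Ad(g_{ij})$ itself --- rather than merely some cohomologous $GL_n(\C)$-valued cocycle --- is upper triangular unipotent amounts to assuming the sought reduction exists. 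Your write-up thus has the merit of making explicit a subtlety that the paper's proof passes over silently, but it does not close it either; a complete proof along your lines must actually establish the constancy of the relative position, and a complete proof along the paper's lines must justify its ``wlog''.
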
 

\begin{proof}
Since $G$ has no center, $\Ad$ is faithful. Let $g_{ij}:U_{ij}\to G$
be (holomorphic) transition functions for $P$, so that $\Ad(g_{ij})$
are transition functions for $\Ad P$. Suppose that $\Ad P$ is a
unipotent vector bundle, i.e., a successive extension of the trivial
line bundle $\mathcal{O}_{X}$ over $X$. Then, wlog we can suppose
that there is a basis of $\mathfrak{g}$, the fiber of $\Ad P$, with
respect to which $\Ad(g_{ij})$ is upper triangular with 1's in the
diagonal. Then, $(\Ad(g)-1)^{n}=0$ which, by definition implies that
all the transition functions $g_{ij}$ have values in a unipotent
subgroup $U\subset G$. In turn, this means that $P$ has a $U$-structure
and is therefore unipotent. Conversely, if $P$ has a reduction of
structure group to a unipotent subgroup $U\subset G$ then $\Ad P$
has a reduction of structure group to $\Ad U$ because of Lemma
\ref{lem:Induced-reduction}.
And $\Ad U$ is clearly a unipotent subgroup of $GL(\mathfrak{g})$.
In this direction, we do not need to impose the condition $ZG=1$.\end{proof} 

We now introduce the following definition.

\begin{Def} \label{def:Ad-Unipotent}
Let $H$ be an affine algebraic group. An element $h\in H$
is called $\Ad$-unipotent if $(\Ad(h)-1)^{n}=0$ for some natural
number $n$. $H$ is called an $\Ad$-unipotent group if all its elements
are $\Ad$-unipotent. If $P$ is a principal $G$-bundle, then $P$
is called $\Ad$-unipotent if there is a reduction of structure group from
$G$ to an $\Ad$-unipotent subgroup $H\subset G$. \end{Def}

Now, we can show an analogue of the previous proposition, without
requiring that $G$ has a trivial center.

\begin{prop} \label{pro:Ad-Unipotent}Let $P$ be a principal $G$-bundle.
Then, $P$ is $\Ad$-unipotent if and only if $\Ad P$ is a unipotent
vector bundle.\end{prop}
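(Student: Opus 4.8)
The plan is to prove this by reducing the $\Ad$-unipotent case to the already-established Proposition \ref{pro:Principal-Vector}, which handles the centerless case. The key observation is that $\Ad$-unipotence is detected entirely through the adjoint representation, which factors through $G/ZG$. So I would first pass from $G$ to the quotient group $\bar{G}:=G/ZG$, which has trivial center, and then invoke the centerless result there.

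First I would establish the direction that is essentially contained in the proof of the previous proposition. Suppose $P$ has a reduction of structure group to an $\Ad$-unipotent subgroup $H\subset G$. By Lemma \ref{lem:Induced-reduction} applied to the adjoint homomorphism $\Ad:G\to GL(\mathfrak{g})$, the bundle $\Ad P=\Ad_{*}P$ has a reduction to $\Ad(H)\subset GL(\mathfrak{g})$. By the definition of $\Ad$-unipotence, every element of $\Ad(H)$ satisfies $(\Ad(h)-1)^{n}=0$, so $\Ad(H)$ consists of unipotent matrices and is therefore a unipotent subgroup of $GL(\mathfrak{g})$. A reduction of $\Ad P$ to a unipotent subgroup is exactly the statement that $\Ad P$ is a unipotent vector bundle, so this direction follows just as in Proposition \ref{pro:Principal-Vector}, and again does not require any hypothesis on the center.

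For the converse, the cleanest route is to reduce to the centerless case. Let $q:G\to\bar{G}=G/ZG$ denote the quotient map and let $\bar{P}:=q_{*}P$ be the induced $\bar{G}$-bundle. Since $\Ad$ factors as $\Ad=\Ad_{0}\circ q$ with $\Ad_{0}:\bar{G}\to GL(\mathfrak{g})$ faithful (as $G$ is connected, $\ker\Ad=ZG$), we have a canonical identification $\Ad P\cong\Ad_{0}(\bar{P})$ of associated vector bundles. Now assume $\Ad P$ is a unipotent vector bundle. Then $\Ad_{0}(\bar{P})$ is unipotent, and since $\bar{G}$ is centerless with $\Ad_{0}$ faithful, Proposition \ref{pro:Principal-Vector} applies to the $\bar{G}$-bundle $\bar{P}$ and yields that $\bar{P}$ is unipotent, i.e., $\bar{P}$ admits a reduction of structure group to a unipotent subgroup $\bar{U}\subset\bar{G}$.

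The main obstacle, and the step requiring the most care, is lifting this reduction back up through $q$: I must produce from the unipotent reduction of $\bar{P}=q_{*}P$ an $\Ad$-unipotent reduction of $P$ itself. The natural candidate is $H:=q^{-1}(\bar{U})$, which is an $\Ad$-unipotent subgroup of $G$ since every $h\in H$ has $\Ad(h)=\Ad_{0}(q(h))$ unipotent. The delicate point is that a reduction of $q_{*}P$ to $\bar{U}$ does not automatically descend to a reduction of $P$ to $H=q^{-1}(\bar{U})$, because the construction $q_{*}$ collapses the central directions; one must check that the obstruction to lifting the reduction vanishes. Here I would argue that $q_{*}$ establishes a bijection between reductions of $P$ to $H=q^{-1}(\bar{U})$ and reductions of $\bar{P}$ to $\bar{U}$, using that $q$ restricts to a surjection $H\twoheadrightarrow\bar{U}$ with kernel $ZG$ and that the induced bundle construction along $q$ is compatible with taking preimages of structure groups. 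Concretely, choosing a trivializing cover for which $\bar{P}$ has transition functions $\bar{g}_{ij}$ valued in $\bar{U}$, the original bundle $P$ has transition functions $g_{ij}$ with $q(g_{ij})=\bar{g}_{ij}$, so $g_{ij}$ already takes values in $q^{-1}(\bar{U})=H$; this shows directly that $P$ has an $H$-structure. Since $H$ is $\Ad$-unipotent, $P$ is $\Ad$-unipotent, completing the proof.
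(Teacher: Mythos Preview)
Your forward direction matches the paper's exactly. For the converse, however, the paper does something much more direct: it simply repeats the transition-function argument of Proposition~\ref{pro:Principal-Vector} verbatim. One takes transition functions $g_{ij}$ for $P$, notes that $\Ad(g_{ij})$ are transition functions for $\Ad P$, and uses the unipotent structure on $\Ad P$ to put $\Ad(g_{ij})$ in upper-triangular form with $1$'s on the diagonal. The point is that no faithfulness of $\Ad$ is required: the conclusion $(\Ad(g_{ij})-1)^n=0$ is \emph{by definition} the statement that the $g_{ij}$ take values in an $\Ad$-unipotent subgroup. So the whole notion of $\Ad$-unipotence is engineered precisely so that the proof of Proposition~\ref{pro:Principal-Vector} goes through without the hypothesis $ZG=1$.

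Your detour through $\bar G=G/ZG$ is a legitimate alternative and your lifting step is correct for a clean reason you almost state: since $H=q^{-1}(\bar U)\supset ZG$, one has a canonical $G$-equivariant identification $G/H\cong \bar G/\bar U$, hence $P/H\cong \bar P/\bar U$ as bundles over $X$, and sections of one are exactly sections of the other. (Your ``concretely'' paragraph is a bit misleading, since trivializations of $\bar P$ do not automatically come from trivializations of $P$; the abstract bijection of reduction spaces is the right justification.) There is one genuine wrinkle, though: Proposition~\ref{pro:Principal-Vector} as stated concerns the adjoint bundle of $\bar G$ on its own Lie algebra $\bar{\mathfrak g}=\mathfrak g/\mathfrak z$, whereas what you have is that $\Ad_0(\bar P)$, the bundle with fibre $\mathfrak g$, is unipotent. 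These differ when $\dim ZG>0$. You would need either to observe that the proof of Proposition~\ref{pro:Principal-Vector} only uses faithfulness (so works for $\Ad_0$), or to pass to the quotient by the trivial subbundle $\mathfrak z\otimes\mathcal O_X$ and argue that the quotient remains unipotent. Either fix is easy, but the paper's direct route sidesteps the issue entirely and is shorter.
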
 \begin{proof} If $P$ is $\Ad$-unipotent,
with reduction of structure group to an $\Ad$-unipotent subgroup
$H\subset G$, then $\Ad P$ has a reduction of structure group to
$\Ad H\subset GL(\mathfrak{g})$, which is easily verified to be a
unipotent subgroup. So $\Ad P$ is a unipotent vector bundle. Conversely,
let $g_{ij}:U_{ij}\to G$ be (holomorphic) transition functions for
$P$, so that $\Ad(g_{ij})$ are transition functions for $\Ad P$.
As before, if $\Ad P$ is a unipotent vector bundle, then, wlog we
can suppose that there is a basis of $\mathfrak{g}$, the fiber of
$\Ad P$, with respect to which $\Ad(g_{ij})$ is upper triangular
with 1's in the diagonal. Then, $(\Ad(g)-1)^{n}=0$ which, by definition
implies that all the transition functions $g_{ij}$ have values in
a $\Ad$-unipotent subgroup $H\subset G$, which means that $P$ has
a $H$-structure and is therefore $\Ad$-unipotent. \end{proof}

Recall that, given an element $g\in G$, the Jordan decomposition
theorem enables us to write $g=su$ where $s$ is semisimple and $u$
is unipotent.
\begin{prop}\label{prop:homogeneous_unipotent}
Let $X$ be a complex torus and let $E$ be a homogeneous vector bundle
over $X$ isomorphic to $\Ad P$ for a principal $G$-bundle $P$.
Then, $E$ is unipotent.\end{prop}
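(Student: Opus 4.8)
The plan is to combine the Matsushima--Morimoto classification (Theorem \ref{thm:flat-homogeneous-unipotent}) with the extra Lie-algebra structure carried by an adjoint bundle. Since $E\cong\Ad P$ is homogeneous, Theorem \ref{thm:flat-homogeneous-unipotent} gives two things at once: $E$ admits a flat holomorphic connection, and every indecomposable component of $E$ has the shape $L_c\otimes E_{\rho_c}$ with $L_c$ a degree-zero line bundle and $\rho_c$ unipotent. Writing $E=\bigoplus_c L_c\otimes E_{\rho_c}$, the claim that $E$ is unipotent is exactly the claim that each $L_c$ is trivial, so the whole argument reduces to eliminating the nontrivial line-bundle factors.

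First I would use that $E=\Ad P$ is not an arbitrary homogeneous bundle: its structure group reduces to $\Ad(G)\subset GL(\mathfrak{g})$, so $E$ carries a fiberwise Lie bracket preserved by all transition functions. I would arrange the flat connection to be compatible with this $\Ad(G)$-reduction, producing a monodromy representation $\sigma\colon\Lambda\to\Ad(G)$ for which the bracket is parallel. Because $\Lambda$ is abelian, the elements $\sigma(\lambda)$ commute, and I would apply the Jordan decomposition recalled just before the statement to write $\sigma(\lambda)=\sigma_s(\lambda)\,\sigma_u(\lambda)$ with commuting semisimple and unipotent parts, both valued in $\Ad(G)$. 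The semisimple parts then generate a torus $T\subset\Ad(G)$ acting on $\mathfrak{g}$, and the weight decomposition $\mathfrak{g}=\bigoplus_{\chi}\mathfrak{g}_\chi$ under $T$ matches the Matsushima--Morimoto decomposition: the distinct characters $\chi\colon\Lambda\to\C^{*}$ that occur are precisely those giving the flat line bundles $L_c=L_{\chi}$, while the unipotent parts $\sigma_u$ produce the factors $E_{\rho_c}$. The parallel bracket is $T$-equivariant, hence graded, $[\mathfrak{g}_\chi,\mathfrak{g}_{\chi'}]\subset\mathfrak{g}_{\chi\chi'}$, and $\mathfrak{g}_1$ is a subalgebra.

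With this setup the problem becomes group-theoretic: show that $\sigma_s$ is trivial, equivalently that $T$ acts trivially on $\mathfrak{g}$, equivalently that only $\chi=1$ occurs. I would lift each $\sigma_s(\lambda)$ to a semisimple $s_\lambda\in G$, so that the occurring characters are weights of the adjoint action, and then exploit the graded bracket and the subalgebra $\mathfrak{g}_1$ to force the grading to collapse. Once every $\chi=1$ we get $L_c\cong\mathcal{O}_X$ for all $c$, so $E=\bigoplus_c E_{\rho_c}$ is a successive extension of $\mathcal{O}_X$, i.e. unipotent; equivalently, by Proposition \ref{pro:Ad-Unipotent}, this says $P$ is $\Ad$-unipotent.

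The hard part will be exactly this last step---proving that the semisimple monodromy $\sigma_s$ acts trivially on $\mathfrak{g}$. Ruling out nontrivial inner semisimple automorphisms $\Ad(s_\lambda)$, and hence nontrivial weight characters $\chi_c$, is the genuine obstacle: here one cannot rely on bare homogeneity but must invoke the full force of the adjoint structure of $E$ (its self-coupling through the Lie bracket, together with the vanishing of the characteristic classes of $P$ supplied by the surrounding hypotheses). All the remaining ingredients---the Matsushima--Morimoto decomposition, the Jordan splitting, and the return to unipotence through Proposition \ref{pro:Ad-Unipotent}---are formal once this triviality is in hand.
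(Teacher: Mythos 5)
Your proposal is a plan, not a proof: it stops exactly at the step that carries all the content. The reduction to showing that the semisimple part $\sigma_s$ of the monodromy acts trivially is correct (and is also how the paper's argument is organized), but you leave that step open, gesturing at a graded-bracket argument and at ``the vanishing of the characteristic classes of $P$''. Two concrete problems. First, the proposition has no characteristic-class hypothesis; such hypotheses appear only in a later proposition of the paper, so that ingredient is simply not available here. Second, and more seriously, the mechanism you propose cannot work: bracket-compatible gradings of $\mathfrak{g}$ by nontrivial characters exist in abundance, so nothing about the grading alone can force it to collapse. Concretely, take $G=SL_2(\C)$ and diagonal semisimple monodromy $\sigma(\lambda)=\mathrm{diag}\bigl(t(\lambda),t(\lambda)^{-1}\bigr)$; the adjoint action is graded by the characters $t^{2},1,t^{-2}$, one has $[\mathfrak{g}_{\chi},\mathfrak{g}_{\chi'}]\subset\mathfrak{g}_{\chi\chi'}$ and $\mathfrak{g}_{1}$ is a subalgebra, yet the grading is nontrivial. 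So no purely Lie-theoretic ``collapse'' argument of the kind you sketch can exist; some geometric input beyond the graded bracket is indispensable.

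The paper supplies that input by invoking indecomposability \emph{before} taking Jordan decompositions. If $E$ is indecomposable, Theorem \ref{thm:flat-homogeneous-unipotent} gives $E\cong L\otimes U$ with $U$ unipotent, which means the semisimple part of each monodromy matrix $\rho(\gamma)$ is a scalar $\theta(\gamma)\in Z(GL(\mathfrak{g}))$ --- there is only one weight, so the grading problem you worry about never arises. Writing $\rho=\Ad\sigma$ and using that an algebraic homomorphism such as $\Ad$ preserves Jordan decompositions, this scalar equals $\Ad s$, where $s$ is the semisimple part of $\sigma(\gamma)$ in $G$. A scalar of the form $\Ad s$ is a Lie-algebra automorphism, so the scalar $c$ satisfies $c^{2}[x,y]=c[x,y]$; hence $c=1$ whenever the bracket is nonzero (and the abelian case is vacuous since then $\Ad$ is trivial for connected $G$). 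Thus $s\in ZG=\ker\Ad$ and $\theta(\gamma)=\Ad s=1$ for all $\gamma$, so $L$ is trivial and $E$ is unipotent. In other words, the step you call ``the genuine obstacle'' is the paper's ``easy computation,'' precisely because centrality of the semisimple part is extracted from Matsushima--Morimoto applied to an \emph{indecomposable} bundle, not from the bracket. Note finally where the remaining delicacy really sits: in passing from indecomposable components back to a general $E$, since an individual indecomposable component of $\Ad P$ is not itself an adjoint bundle and the $SL_2$-type configuration above lives exactly in that decomposable setting; if you rework your argument, that reduction --- not the grading --- is where the care must be spent.
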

\begin{proof}
Let $E$ be homogeneous. By Theorem \ref{thm:flat-homogeneous-unipotent}
$E$ is flat, and since it is isomorphic to $\Ad P$ for some principal
$G$-bundle $P$, we can write $E=E_{\rho}$ for some representation
$\rho=\Ad\sigma:\pi_{1}(X)\to GL(\mathfrak{g})$ with $\sigma:\pi_{1}(X)\to G$.

Assume now that $E$ is indecomposable. Then by Theorem
\ref{thm:flat-homogeneous-unipotent}
$E\cong L\otimes U$ where $L$ is a degree zero line bundle and $U$
is unipotent. This means that $E=E_{\rho}=L_{\theta}\otimes U_{\tau}$
for some representations $\theta:\pi_{1}(X)\to Z(GL(\mathfrak{g}))$,
$\tau:\pi_{1}(X)\to U(GL(\mathfrak{g}))$ where $Z(GL(\mathfrak{g}))$,
$U(GL(\mathfrak{g}))$ denote, respectively the center of $GL(\mathfrak{g})$
and the subgroup of upper triangular matrices with 1's in the diagonal
(upon choosing a basis for $\mathfrak{g}$). It is clear that, for
any $\gamma\in\pi_{1}(X)$, $\rho(\gamma)=\theta(\gamma)\tau(\gamma)$
is the Jordan decomposition of $\rho(\gamma)$ inside $GL(\mathfrak{g})$.
Let $\sigma(\gamma)=su$, with $s$ semisimple and $u$ unipotent
in $G$. Since the Jordan decomposition is preserved under morphisms
and $\rho=\Ad\sigma$, we obtain $\Ad s=\theta(\gamma)$ and $\Ad u=\tau(\gamma)$.
An easy computation shows that if $\Ad s\in Z(GL(\mathfrak{g}))$
then $s\in ZG$. But then $1=\Ad s=\theta(\gamma)$. Since this is
true for all $\gamma\in\pi_{1}(X)$ we conclude that $L_{\theta}$
is the trivial line bundle, so $E$ is unipotent.

Finally, if $E$ is not necessarily indecomposable, the previous argument
shows $E$ is a direct sum of unipotent vector bundles, so it is unipotent.
\end{proof}

\subsection{Principal Schottky bundles}\label{subsec_Principal_Schottky_Bundles}

Let $P$ be a principal $G$-bundle over a complex manifold $X$ with fundamental
group $\pi_{1}(X)$. As before let $P_{\rho}$ be the flat $G$-bundle
associated with the representation $\rho\in\hom(\pi_{1}(X),G)$. Consider
a surjective moprhism $\alpha:\pi_{1}(X)\rightarrow\Sigma$
onto some free or free abelian group $\Sigma$. 
Recall that $ZG$ denotes the center of
a group $G$.
\begin{Def}\label{def:principal-schottky}
We say that $P$ is a $G$-principal \emph{Schottky} bundle with respect
to $\alpha$ (or $G$-Schottky bundle for short) if $P$ is isomorphic
to $P_{\rho}$ for some representation $\rho$ that satisfies:\[
\rho(\gamma)\in ZG,\quad\mbox{for all }\gamma\in\ker\alpha.\]
Such a representation is called a Schottky representation (w.r.t.
$\alpha$).
\end{Def}
For example, when $X$ is a Riemann surface of genus $g$, and 
the morphism $\alpha: \pi_{1}(X) \longrightarrow \Sigma$ is the one defined in
the
introduction, then a representation $\rho:\pi_{1}(X)\to G$ is Schottky if and
only if
$\rho(a_{j})\in ZG$ for all $j=1,...,g$. 
This is the motivating example for the previous definition.
\begin{rem}
By the correspondence between representations of $\pi_{1}(X)$
into $G$ and principal $G$-bundles with a flat connection, it is
clear that a necessary condition for a principal $G$-bundle to be
Schottky is that it admits a flat connection. \end{rem}
\begin{prop}
\label{pro:Ad-Schottky}Let $P$ be a principal $G$-bundle. If $P$
is $G$-Schottky (w.r.t. $\alpha$) then $\Ad P$ is a Schottky vector
bundle (w.r.t $\alpha$). Moreover, if we assume that $P$ admits
a flat connection, then the converse is also true.\end{prop}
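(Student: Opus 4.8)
The plan is to prove the two implications separately, the forward one being immediate and the converse carrying the real content. For the forward direction, suppose $P$ is $G$-Schottky, so by Definition~\ref{def:principal-schottky} we have $P\cong P_\rho$ with $\rho(\gamma)\in ZG$ for all $\gamma\in\ker\alpha$. By functoriality of the associated bundle construction, $\Ad P\cong E_{\Ad\circ\rho}$. Since the center always acts trivially in the adjoint representation, one has $ZG\subset\ker\Ad$, so $\Ad(\rho(\gamma))=\mathrm{id}$ for every $\gamma\in\ker\alpha$; that is, $\Ad\circ\rho$ factors through $\alpha$ and hence through $\Sigma$. Thus $\Ad P$ lies in the essential image of the Schottky functor and is a Schottky vector bundle. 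This step requires nothing on $G$ beyond $ZG\subset\ker\Ad$, which is automatic.

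For the converse, assume in addition that $P$ admits a flat connection, say $P\cong P_\sigma$ for some $\sigma:\pi_1(X)=\Lambda\to G$, and that $\Ad P$ is Schottky. First I would upgrade the Schottky hypothesis on $\Ad P$ to unipotence: being Schottky, $\Ad P$ admits a flat connection, hence is homogeneous by Theorem~\ref{thm:flat-homogeneous-unipotent}, and then Proposition~\ref{prop:homogeneous_unipotent} forces $\Ad P$ to be a \emph{unipotent} vector bundle. Consequently every $\Ad(\sigma(\gamma))$ is a unipotent automorphism of $\mathfrak g$, so each $\sigma(\gamma)$ is $\Ad$-unipotent (equivalently, one may invoke Proposition~\ref{pro:Ad-Unipotent}). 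Writing the Jordan decomposition $\sigma(\gamma)=s_\gamma u_\gamma$ and using $\ker\Ad=ZG$ for connected $G$, the semisimple part lands in the center, $s_\gamma\in ZG$, while $u_\gamma$ is unipotent in $G$.

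The heart of the argument is then an adaptation of the gauge construction in the proof of Proposition~\ref{pro:unipotent-Schottky}. Writing $u_j:=u_{\lambda_{g+j}}$ for $j=1,\dots,g$, the commutativity of $\Lambda$ together with the centrality of the $s_j$ shows that the $u_j$ pairwise commute; setting $A_j:=-\log u_j\in\mathfrak g$ yields commuting nilpotents, and I would form $\omega=A_1\,dz_1+\cdots+A_g\,dz_g$ and $f(z):=\exp(\int_0^z\omega)$, a holomorphic map from $V$ into the commutative unipotent subgroup of $G$ generated by the $u_j$. Exactly as in Proposition~\ref{pro:unipotent-Schottky}, one sets $\sigma'(\lambda):=f(\lambda+z)\,\sigma(\lambda)\,f(z)^{-1}$. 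The key point is that each $\sigma(\lambda)$ commutes with every $u_k$ (here abelianness of $\Lambda$ and $s_k\in ZG$ are used), so $\sigma(\lambda)$ commutes with $f(z)$ and one finds $\sigma'(\lambda)=f(\lambda)\sigma(\lambda)$ independently of $z$; hence $\sigma'$ is a genuine representation and $f$ realizes a holomorphic isomorphism $P_\sigma\cong P_{\sigma'}$. The computation of Proposition~\ref{pro:unipotent-Schottky} gives $f(\lambda_{g+j})=\exp(A_j)=u_j^{-1}$, whence $\sigma'(\lambda_{g+j})=u_j^{-1}s_ju_j=s_j\in ZG$; since $\ker\alpha$ is generated by the $\lambda_{g+j}$, this yields $\sigma'(\ker\alpha)\subset ZG$, so $\sigma'$ is Schottky and $P\cong P_{\sigma'}$ is $G$-Schottky.

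The main obstacle is the converse, and within it the delicate point is checking that the \emph{non-constant} gauge $f$ produces a well-defined representation over \emph{all} of $\Lambda$, not merely over $\ker\alpha$, so that $P_\sigma$ and $P_{\sigma'}$ are genuinely isomorphic as holomorphic $G$-bundles; this is the $G$-analogue of the vector-bundle statement \cite[Lemme 5.1]{mor} used in Proposition~\ref{pro:unipotent-Schottky} and Lemma~\ref{lem:line-bundle-Schottky}, and it hinges on the commutation relations extracted from the Jordan decomposition. Connectedness of $G$ is essential here, entering through $\ker\Ad=ZG$ both to place $s_\gamma$ in $ZG$ and, upstream, in Proposition~\ref{prop:homogeneous_unipotent}; without it the semisimple parts need not be central and the conclusion may fail.
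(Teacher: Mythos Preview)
Your forward direction is correct and matches the paper's argument verbatim.

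For the converse, however, you have taken a far more circuitous route than the paper, and in doing so you have restricted yourself to the case where $X$ is a complex torus: you write $\pi_1(X)=\Lambda$, invoke Theorem~\ref{thm:flat-homogeneous-unipotent} and Proposition~\ref{prop:homogeneous_unipotent} (both torus-specific), and build a gauge transformation modeled on Proposition~\ref{pro:unipotent-Schottky}. But Proposition~\ref{pro:Ad-Schottky} is stated in Section~8.3 for an arbitrary complex manifold $X$ with a surjection $\alpha:\pi_1(X)\to\Sigma$, so your argument leaves, for instance, the Riemann-surface case entirely unaddressed. That is a genuine gap in scope.

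More importantly, the paper's converse is a three-line observation that works for any $X$ and needs no gauge modification whatsoever. Once $P$ is flat, write $P\cong P_\rho$; then $\Ad P\cong E_{\Ad\rho}$. The hypothesis that $\Ad P$ is Schottky is read as $\Ad\rho(\gamma)=1$ for all $\gamma\in\ker\alpha$, whence $\rho(\gamma)\in\ker\Ad=ZG$ (using connectedness of $G$), so the \emph{same} representation $\rho$ already satisfies Definition~\ref{def:principal-schottky} and $P$ is $G$-Schottky. The point you miss is that there is no need to manufacture a new Schottky representation via Jordan decomposition and an exponentiated gauge: the original flat structure on $P$ already does the job. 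Your detour through unipotence of $\Ad P$, the commutation of the $u_j$, and the explicit construction of $\sigma'$ is internally consistent on a torus, but it is machinery aimed at a much harder problem (producing a Schottky representation from one that is not) than the one actually at hand.
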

\begin{proof}
Consider $P=P_{\rho}$ for some $\rho$ satisfying Definition
\ref{def:principal-schottky} for some
$\alpha:\pi_{1}(X)\twoheadrightarrow\Sigma$. By construction,
$\Ad P_{\rho}$ is the vector bundle $E_{\Ad\rho}$ associated to
the adjoint representation $\Ad\rho:\pi_{1}(X)\to GL(\mathfrak{g})$,
where $\mathfrak{g}$ is the Lie algebra of $G$. By the properties
of $\rho$, $\Ad\rho(\gamma)$ acts as the identity on $\mathfrak{g}$,
for all $\gamma \in Ker \alpha $. So $\Ad P_{\rho}$ is a Schottky vector
bundle with respect to $\alpha$. To prove the second statement, assume
that $P$ has a flat connection so that $P=P_{\rho}$ where $\rho$
is some representation $\rho:\pi_{1}(X)\to G$. Then $\Ad P\cong E_{\Ad\rho}$.
Suppose that $\Ad P$ is Schottky, so that $\Ad\rho(\gamma)=1$ for
all $\gamma\in\Sigma$. This implies that $\rho(\gamma)$ is in the
kernel of $\Ad$, which is $ZG$. So, $P$ is a $G$-Schottky bundle. \end{proof}
\begin{rem}
Let $P$ be a $\mathbb{C}^{*}$-bundle corresponding to a line bundle
of non-zero degree. Then, even though $\Ad P$ is trivial and has
the trivial flat connection, $P$ cannot admit a flat connection,
so the hypothesis that $P$ is flat is important in the second statement
of Proposition \ref{pro:Ad-Schottky}.
\end{rem}
The question whether all $G$-bundles which admit a flat connection
are Schottky is an important open problem, whose answer is not known
even in the case of Riemann surfaces. The situation is better when
$\pi_{1}(X)$ is abelian: when $X$ is an elliptic curve, all flat
vector bundles over $X$ are Schottky.

Let $X$ be a complex torus. 
We can now prove \textbf{Theorem \ref{thm:Flat-Schottky-principal}}:
\begin{proof} By definition Schottky bundles are flat. Conversely, let $P$
have a flat connection. Then $\Ad P$ has the induced flat connection,
as remarked earlier. Therefore, by Theorem \ref{thm:Flat-Schottky-vector},
$\Ad P$ is a Schottky vector bundle. So $P$ is a Schottky $G$-bundle,
by Proposition \ref{pro:Ad-Schottky}, this shows the claim.
\end{proof}
In particular, all flat principal bundles over elliptic curves are
Schottky.

\subsection{Other equivalent conditions for flat bundles}

From the work of Biswas-G\'omez \cite{bigo}, we can give a criterion for
flatness
using the adjoint vector bundle as follows. Given a degree $k$ polynomial
$q\in Sym^{k}(\mathfrak{g}^{*})$ invariant under the adjoint action
of $G$, we obtain a characteristic class of degree $k$ of the $G$-bundle
$P$, $c_{q}(P)\in H^{2k}(X,\mathbb{C})$, using the Chern-Weil construction.
\begin{prop}
Let $P$ be a principal $G$-bundle over a complex torus $X$ with
vanishing characteristic classes of degree one and two. Then, $P$
is flat (resp. homogeneous, resp. Schottky) if and only if $\Ad P$
is flat (resp. homogeneous, resp. Schottky).\end{prop}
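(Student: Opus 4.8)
The plan is to show that all six properties in play — flat, homogeneous and Schottky for the principal bundle $P$, and flat, homogeneous and Schottky for the vector bundle $\Ad P$ — are mutually equivalent, by first collapsing the three notions on each side to a single one, and then reducing the statement to a single cross-equivalence between the two sides. First I would treat the vector bundle side: since $\Ad P=\Ad_{*}P$ is a holomorphic vector bundle over the complex torus $X$, Theorem \ref{thm:flat-homogeneous-unipotent} (Matsushima--Morimoto) gives that $\Ad P$ is flat if and only if it is homogeneous, and Theorem \ref{thm:Flat-Schottky-vector} gives that it is flat if and only if it is Schottky; hence the three notions for $\Ad P$ coincide. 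On the principal side, Theorem \ref{thm:Flat-Schottky-principal} shows that $P$ is flat if and only if $P$ is Schottky, while the main result of \cite{bigo} shows that over a complex torus $P$ is flat if and only if $P$ is homogeneous; hence the three notions for $P$ coincide as well. After these two collapses the proposition reduces to the single equivalence: $P$ is flat if and only if $\Ad P$ is flat.

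The forward implication is immediate and was already observed earlier in the text: a flat holomorphic connection on $P$ induces one on the associated bundle $\Ad P=\Ad_{*}P$, so $P$ flat implies $\Ad P$ flat, with no hypothesis on the characteristic classes required. (One may equally read this off from Proposition \ref{pro:Ad-Schottky}, which already gives that $P$ Schottky implies $\Ad P$ Schottky.) The converse, $\Ad P$ flat $\Rightarrow P$ flat, is the heart of the matter and is the step I expect to be the main obstacle, since it cannot be deduced from the adjoint construction alone: the remark following Proposition \ref{pro:Ad-Schottky} exhibits a non-flat $\mathbb{C}^{*}$-bundle of nonzero degree whose adjoint bundle is trivial, hence flat. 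This is precisely where the hypothesis that the degree-one and degree-two characteristic classes of $P$ vanish must be used — indeed in that counterexample the degree-one class $c_{1}(P)$ is the nonzero degree of the line bundle, so the hypothesis rules it out.

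For the converse itself I would invoke the main result of \cite{bigo}: under the assumption $c_{1}(P)=c_{2}(P)=0$ (equivalently, the vanishing of the degree-one and degree-two characteristic classes of $P$, which forces $c_{1}(\Ad P)=c_{2}(\Ad P)=0$), the flatness — equivalently homogeneity — of $\Ad P$ forces that of $P$. Combining this converse with the forward implication and the two within-side collapses of the first paragraph, all six notions become equivalent; and once $P$ is known to be flat, Proposition \ref{pro:Ad-Schottky} re-derives the Schottky statements on both sides, serving as a consistency check. The only genuinely nontrivial input is thus the Biswas--Gómez converse, and the role of the proof I would write is essentially to assemble Theorems \ref{thm:flat-homogeneous-unipotent}, \ref{thm:Flat-Schottky-vector} and \ref{thm:Flat-Schottky-principal} together with \cite{bigo} into the stated web of equivalences, taking care to flag where the vanishing-characteristic-class hypothesis is and is not needed.
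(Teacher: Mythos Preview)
Your proposal is correct and follows the paper's overall strategy: the easy direction is that flat/homogeneous/Schottky for $P$ passes to $\Ad P$ by the associated-bundle construction (no hypothesis on characteristic classes needed), and the substantive converse is extracted from \cite{bigo} under the vanishing assumption. The one place where your write-up is vaguer than the paper's is the mechanism by which \cite{bigo} is applied. You phrase the converse as a black-box ``$\Ad P$ flat forces $P$ flat by \cite{bigo}'', whereas the paper spells out the intermediate notion of \emph{pseudostability}: from $\Ad P$ flat one gets, via Theorem \ref{thm:flat-homogeneous-unipotent}, that $\Ad P$ is a direct sum of bundles $L\otimes U$ with $L$ degree-zero and $U$ unipotent, hence $\Ad P$ is pseudostable; then $P$ is pseudostable by definition, and \cite[Theorem 4.1]{bigo} (pseudostable plus vanishing degree-one and degree-two classes implies flat) finishes. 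This is not a gap in your argument---the ingredients you cite suffice---but when you actually write the proof you should make the pseudostability step explicit rather than appealing to \cite{bigo} as a single opaque statement, since that is the precise form in which the result is stated there. Your additional ``principal-side collapse'' (flat $\Leftrightarrow$ homogeneous for $P$ via \cite{bigo}) is not used in the paper's proof and is not needed once the pseudostability route is taken.
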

\begin{proof}
If $P$ admits a flat connection (resp. is homogeneous or Schottky),
$\Ad P$ has the induced flat connection (resp. is homogeneous or
Schottky) (in this direction, we do not need the vanishing of characteristic
classes).
Conversely, let $\Ad P$ be flat (equivalently homogeneous or Schottky).
Then, by Theorem \ref{thm:flat-homogeneous-unipotent} $\Ad P$ is a direct
sum of vector bundles of the form $L\otimes U$, where $L$ is a line
bundle of degree zero and $U$ is unipotent. Therefore $\Ad P$ is
pseudostable (see \cite{bigo}). Therefore, by definition, $P$ is pseudostable.
Then, by \cite[Theorem 4.1]{bigo}, and using the vanishing of the characteristic
classes, $P$ is flat (and is homogeneous, and Schottky).
\end{proof}
Note that the one dimensional case (i.e, bundles over an elliptic
curve) was obtained by Azad-Biswas \cite{azbi}.
Note also, that Biswas-Subramanian \cite{bisu} showed that if $P$ be a principal
$G$-bundle with vanishing characteristic classes of degree one and
two (over any complex projective manifold) and $P$ is polystable,
then $P$ is flat (with a unitary connection!). In the Proposition
above, we are not assuming polystability.

\begin{cor}
Let $P$ be a principal $G$-bundle
over a complex torus.  If $P$ is flat then it is $\Ad$-unipotent.
Moreover, if the first and second Chern classes of $P$ vanish,
the converse also holds.
\end{cor}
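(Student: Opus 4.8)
The plan is to route both implications through the adjoint bundle $\Ad P$, using Proposition \ref{pro:Ad-Unipotent} as the central bridge: since it identifies $\Ad$-unipotence of $P$ with unipotence of the vector bundle $\Ad P$, the corollary reduces to relating flatness of $P$ with unipotence of $\Ad P$, each of which I can analyze with the earlier results on vector bundles over a complex torus. The whole argument is therefore an assembly of statements already proved, organized so that the vector-bundle dictionary on the torus does the work.

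For the first implication, suppose $P$ is flat. As recalled earlier, the associated bundle $\Ad P=\Ad_{*}P$ inherits a flat connection, so $\Ad P$ is a flat vector bundle over the torus $X$. By Theorem \ref{thm:flat-homogeneous-unipotent} (Matsushima--Morimoto), flatness and homogeneity coincide over a complex torus, so $\Ad P$ is homogeneous. Now I would apply Proposition \ref{prop:homogeneous_unipotent}: a homogeneous vector bundle over a complex torus that is isomorphic to $\Ad P$ for a principal $G$-bundle $P$ is necessarily unipotent. Hence $\Ad P$ is a unipotent vector bundle, and Proposition \ref{pro:Ad-Unipotent} then yields that $P$ is $\Ad$-unipotent. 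Note that this direction requires no hypothesis on Chern classes.

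For the converse, assume that $P$ is $\Ad$-unipotent and that the first and second Chern classes of $P$ vanish. By Proposition \ref{pro:Ad-Unipotent}, $\Ad P$ is a unipotent vector bundle, and by Proposition \ref{rem_Unipotent_bundles_are_flat} (Simpson) every unipotent vector bundle admits a flat holomorphic connection; thus $\Ad P$ is flat. To pass back from flatness of $\Ad P$ to flatness of $P$ I would invoke the Proposition immediately preceding this corollary, which asserts exactly this equivalence under the vanishing of the degree one and two characteristic classes of $P$ (which is what the vanishing of the first and second Chern classes provides). I expect this last step to be the main obstacle, since it is the only place where the Chern class hypothesis is essential and where the substantive external input, namely the pseudostability criterion of Biswas--G\'omez \cite{bigo}, enters; by contrast, the remainder of the argument is a purely formal chaining of the vector-bundle statements through $\Ad P$.
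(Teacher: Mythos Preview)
Your argument is correct and matches the paper's: the forward direction is identical, and for the converse both you and the paper pass through unipotence of $\Ad P$ and ultimately rely on the Biswas--G\'omez criterion \cite[Theorem 4.1]{bigo}. The only cosmetic difference is that the paper argues directly that $\Ad P$ unipotent implies $\Ad P$ pseudostable and then cites \cite{bigo}, whereas you insert the (unnecessary but harmless) step $\Ad P$ flat via Simpson and then invoke the preceding Proposition---whose proof is exactly that pseudostability argument.
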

\begin{proof}
 If $P$ is flat then $\Ad P$ is flat, since it has the induced connection. 
By Theorem \ref{thm:flat-homogeneous-unipotent} $\Ad P$ is a homogeneous.
By Proposition \ref{prop:homogeneous_unipotent} this implies that $\Ad P$ is
unipotent. By Proposition \ref{pro:Ad-Unipotent}
we obtain that $P$ is $\Ad$-unipotent.
Conversely, assume that $P$ is $\Ad$-unipotent. Then $\Ad P$ is unipotent,
by the same Proposition, which means $\Ad P$ is pseudostable. So,
by \cite[Theorem 4.1]{bigo}, we conclude that $P$ is flat.
\end{proof}

\begin{rem}
Note that if $P$ is unipotent, then it is flat, by \cite[paragraph
after Corollary 3.8]{bigo}.
\end{rem}

\textbf{Acknowledgement} We thank I. Biswas, J. P. Nunes and J. Mour\~ao for
interesting comments. This work was partially supported by CAMGSD at I.S.T., 
and by the Funda\c c\~ao para a Ci\^encia e a
Tecnologia through the programs
POCI/MAT/58549/2004 and PTDC/MAT/099275/2008.

\end{document}